\documentclass[11pt,reqno,a4paper]{amsart}
\usepackage{amsfonts}
\usepackage{ifthen}
\usepackage{amsthm,cite}
\usepackage{amsmath,mathrsfs}
\usepackage{graphicx}
\usepackage{amscd,amssymb,amsthm}
\usepackage{color}
\usepackage{hyperref}
\usepackage{amsthm}
\usepackage{amsmath}

\allowdisplaybreaks

\usepackage{array,multirow,makecell}

\setcellgapes{1pt}
\makegapedcells
\newcolumntype{R}[1]{>{\raggedleft\arraybackslash }b{#1}}
\newcolumntype{L}[1]{>{\raggedright\arraybackslash }b{#1}}
\newcolumntype{C}[1]{>{\centering\arraybackslash }b{#1}}
\setlength{\paperwidth}{210mm} \setlength{\paperheight}{297mm}
\setlength{\oddsidemargin}{0mm} \setlength{\evensidemargin}{0mm}
\setlength{\topmargin}{-20mm} \setlength{\headheight}{10mm}
\setlength{\headsep}{3mm} \setlength{\textwidth}{160mm}
\setlength{\textheight}{240mm} \setlength{\footskip}{15mm}
\setlength{\marginparwidth}{0mm} \setlength{\marginparsep}{0mm}
\newcounter{minutes}\setcounter{minutes}{\time}
\divide\time by 60
\newcounter{hours}\setcounter{hours}{\time}
\multiply\time by 60 \addtocounter{minutes}{-\time}

\newtheorem{theo}{Theorem}[section]

\newtheorem{theorem}{Theorem}[section]

\newtheorem{lemma}[theorem]{Lemma}
\newtheorem{definition}[theorem]{Definition}
\newtheorem{corollary}[theorem]{Corollary}
\newtheorem{remark}[theorem]{Remark}
\newtheorem{example}[theorem]{Example}

\setlength\arraycolsep{1pt}
\title[]{On the Bessel Function and $n$-dimensional Hankel transform with Bicomplex arguments and coherent states}
\author[Snehasis Bera]{Snehasis Bera}
\address{Snehasis Bera\newline Department of Mathematics,\newline National Institute of Technology Jamshedpur, Jamshedpur-831014, Jharkhand, India.}
\email{berasnehasis1996@gmail.com}
\author[Sourav Das]{Sourav Das$^\ast$}
\thanks{$^\ast$Corresponding author}
\address{Sourav Das\newline Department of Mathematics,\newline National Institute of Technology Jamshedpur, Jamshedpur-831014, Jharkhand, India.}
\email{souravdasmath@gmail.com, souravdas.math@nitjsr.ac.in}
\author[Abhijit Banerjee]{Abhijit Banerjee}
\address{Abhijit Banerjee\newline Department of Mathematics,\newline Garhbeta College, Paschim Medinipur-721127, West Bengal, India.}
\email{abhijit.banerjee.81@gmail.com}
\keywords{Bicomplex functions, Bessel function, Hankel transform, Coherent states}
\subjclass{30G35; 33C10; 42B10; 81R30}
\date{}
\begin{document}

\begin{abstract}
  In this work, we introduce bicomplex Bessel function and analyze its region of convergence. Important properties of the bicomplex Bessel function, such as recurrence relations, integral representations, differential relations are explored. Moreover a differential equation satisfied by the bicomplex Bessel function is established. Furthermore, we investigate bicomplex holomorphicity and discuss its asymptotic behavior. Finally, we define $n$-dimensional bicomplex Hankel transformation by using bicomplex Bessel function and show  that it is an isomorphism between two suitably defined function spaces. The application of the $n$-dimensional bicomplex Hankel transform has been effectively demonstrated by solving some partial differential equations. Additionally, a new extension of coherent states is built based on the use of the bicomplex Bessel function and demonstrate that these states fulfill the conditions of normalizability, continuity and the resolution of unity.
\end{abstract}
\maketitle
\section{Introduction and motivation}
For several centuries, mathematicians have explored natural extensions of the complex number system and its associated function theory. Among the most prominent extensions are the quaternions, introduced by W. Hamilton in 1843, and the bicomplex numbers, first described by C. Segre \cite{le} in 1892. The set of bicomplex numbers serves as a compelling commutative counterpart to the non commutative skew field of quaternions, another four-dimensional real space. In contrast to quaternions, bicomplex numbers exhibit commutative multiplication and constitute a ring characterized by the presence of zero divisors. Several aspects of bicomplex numbers have been investigated, including their algebraic and geometric properties along with various practical applications \cite{bc_number, bc_holomorphic, multicomplex, hyperbolic}.

Bessel functions play a crucial role in various fields of science and engineering due to their ability to model phenomena with cylindrical or spherical symmetry. In mathematical physics, they are the solutions to Bessel’s differential equation and arise naturally in problems involving wave propagation, heat conduction, and vibrations \cite{transforms}. Bessel functions also appear in quantum mechanics, particularly when solving the Schrödinger equation for systems with central potentials \cite{qu_mechanics}. In engineering, they are used in signal processing, filter design, and control theory. For a more comprehensive discussion on their applications, refer to references \cite{app_bessel, bessel}.

In recent years, there has been a growing focus on extending various functions into bicomplex space, which broadens the scope and applicability of mathematical tools. For example, S. P. Goyal et al. \cite{gamma} extended the notions of Beta and Gamma functions from  the space of complex variables to bicomplex variables, identifying essential properties like the Gauss multiplication theorem, binomial theorem etc. Additionally, R. Meena and A. K. Bhabor \cite{bc_hypergeometric}  further extended the hypergeometric function into the bicomplex domain and derived its recurrence relations and integral representations. Further studies on the bicomplex analogues of special functions and their applications are presented in \cite{hurwitz, zeta}. In the current article it is our aim to obtain a bicomplex version of the well known Bessel function and present some applications of the generalized function.

The collection of bicomplex numbers $\mathbb{BC}$ is represented by \cite{bc_number}: $$\mathbb{BC}=\{Z:Z=\lambda_1+j\lambda_2=a+ib+jc+kd\}$$
where $a,b,c,d\in \mathbb{R},\lambda_1,\lambda_2\in\mathbb{C}$ with $i,j,k$ satisfies the properties $i\cdot i=j\cdot j=-1,k\cdot k=1$ and $i\cdot j=j\cdot i=k$.
For every bicomplex number $Z=\lambda_1+j\lambda_2\in \mathbb{BC} $ there exists a unique representation known as the idempotent representation, given by  $$ Z=(\lambda_1-i\lambda_2)e_1+(\lambda_1+i\lambda_2)e_2=z_1e_1+z_2 e_2,$$ where $e_1=\frac{1+k}{2}$  and \; $e_2=\frac{1-k}{2}$ are two idempotent bicomplex numbers satisfies the identities: $e_1+e_2=1, e_1-e_2=ij, e_1^{2}=e_1, e_2^{2}=e_2, e_1\cdot e_2=e_2\cdot e_1=0$. Three different conjugations exist for any bicomplex number $Z$ on $\mathbb{BC}$ are given by
\begin{align*}
    \overline{Z}=\overline{\lambda_1}+j\overline{\lambda_2},\quad \tilde{Z}=\lambda_1-j\lambda_2,\quad
    Z^*=\overline{\lambda_1}-j\overline{\lambda_2}.
\end{align*}
The space $\mathbb{BC}$ consisting of all bicomplex numbers, forms a commutative ring with unity. The collection of all zero divisors of $\mathbb{BC}$ is defined as \cite{bc_number} : $$\mathcal{O}_2=\{\lambda_1+j\lambda_2:{\lambda_1}^2+{\lambda_2}^2=0\}.$$
A subset $\mathcal{D}$ of $\mathbb{BC}$ is referred to as the set of hyperbolic numbers. The elements of this subset are of the form
 $Z=z_1e_1+z_2e_2$ where $z_1,z_2\in\mathbb{R}$. Inside $\mathcal{D}$ the subsets of non-negative and non-positive hyperbolic numbers can be defined as follows:
\begin{align*}
       \mathcal{D}^+=\{Z=z_1e_1+z_2e_2|z_1,z_2\geq0\}\quad \mbox{and}\quad \mathcal{D}^-=\{Z=z_1e_1+z_2e_2|z_1,z_2\leq0\},
   \end{align*}respectively. Furthermore, within the set $\mathcal{D}$ a partial order relation is defined. Specifically,if $Z,W\in\mathcal{D}$ and $Z<_hW$, it implies that $W-Z\in\mathcal{D}^+$ and $Z-W\in\mathcal{D}^-$.
 The hyperbolic norm $|\cdot|_h$ for any bicomplex number $Z=z_1e_1+z_2 e_2$, is given by\cite{bc_number}: $$|Z|_h=|z_1|e_1+|z_2|e_2,$$ and satisfies multiplicative property $|ZW|_h=|Z|_h|W|_h$ for all $Z,W\in\mathbb{BC}$.

An alternative representation of the bicomplex number $Z=z_1e_1+z_2e_2$ can be given as
\begin{align*}
    \mathcal{S}(Z)=\mathcal{S}_1(Z)e_1+\mathcal{S}_2(Z)e_2,
\end{align*}
where the mappings $\mathcal{S}_1$ and $\mathcal{S}_2$  are projections from $\mathbb{BC}$ to $\mathbb{C}(i)$ defined as follows:
\begin{align*}
    \mathcal{S}_1(Z)=\lambda_1-i\lambda_2\;\; \mbox{and}\;\; \mathcal{S}_2(Z)=\lambda_1+i\lambda_2.
\end{align*}
A bicomplex valued function $g:X\subset \mathbb{BC} \rightarrow \mathbb{BC}$ is differentiable at $Z_0$, if the limit $$\lim\limits_{\substack{{Z\to Z_0}\\{Z-Z_0\notin \mathcal{O}_2}}}\frac{g(Z)-g(Z_0)}{Z-Z_0}$$ exist finitely. Moreover, if $g$ is differentiable at every points in $X$ then it is referred to as bicomplex holomorphic in $X$ and this is equivalent to stating that a bicomplex function $g=g_1+jg_2$ is said to be  holomorphic in bicomplex space iff $g_1$ and $g_2$ are holomorphic function of $\lambda_1$ and $\lambda_2$ and satisfies bicomplex Cauchy-Riemann equations, which are expressed as follows:
\begin{align}\label{eq:i5}
       \frac{\partial g_1}{\partial \lambda_1}= \frac{\partial g_2}{\partial \lambda_2} \quad
      and \quad  \frac{\partial g_1}{\partial \lambda_2}=- \frac{\partial g_2}{\partial \lambda_1}.
      \end{align}

Let $C:\phi(t)=\phi_1(t)e_1+\phi_2(t)e_2,\;a\leq t\leq b$, represent a piecewise continuously differentiable curve in $\mathbb{BC}$ and $C_1:\phi_1(t),\;a\leq t\leq b$, $C_2:\phi_2(t),\;a\leq t\leq b$ are two projection curves in $\mathcal{S}_1,\mathcal{S}_2$ respectively that means $C=(C_1,C_2)$, then integration of bicomplex valued function $g$ is provided by \cite{multicomplex}:
\begin{align*}
    \int_C g(\phi) d\gamma=\left[\int_{C_1}g(\phi_1) d\phi_1\right] e_1+\left[\int_{C_2}g(\phi_2) d\phi_2\right] e_2.
\end{align*}
The bicomplex gamma function is expressed in its Euler product form as follows \cite{gamma}:
    \begin{align*}
        \Gamma_b(Z)=Z^{-1}e^{-\gamma Z}\prod_{k=1}^{\infty}\left(\left(1+\frac{Z}{k}\right)^{-1}\exp\left(\frac{Z}{k}\right)\right),
    \end{align*}
   with $\lambda_1\not=-\frac{(r_1+r_2)}{2}$, $\lambda_2\not=i\frac{(r_2-r_1)}{2}$ where  $r_1,r_2\in \mathbb{N}\cup \{0\}$ and $\gamma$ is the Euler constant \cite{sp_function}. Moreover, the bicomplex gamma function is represented through its idempotent representation as:
    \begin{align}\label{eq:g}
        \Gamma_b(Z)=\Gamma(z_1) e_1+ \Gamma(z_2) e_2.
    \end{align}
 Bessel function of the first kind of order $\nu$ is defined as \cite{bessel} :
    \begin{align}\label{eq:5}
         J_\nu(z)=
  \sum_{n=0}^{\infty}\frac{(-1)^n}{n!\Gamma{(\nu+n+1)}}\left(\frac{z}{2}\right)^{2n+\nu},\; (\nu,z\in \mathbb{C}).
    \end{align}
For our purpose we need to introduce the concept of $\mathcal{D}$-boundedness. We recall from \cite{bc_functional}
\begin{definition}
    The operator $T:X\rightarrow Y$ is called $\mathcal{D}$-bounded if there exists $\kappa\in\mathcal{D}^+$ such that
    \begin{align*}
    |Tx|_h<_h\kappa|x|_h,\;\forall x\in X.
    \end{align*}
\end{definition}
\begin{lemma}\label{lemma2}
    The operator $T:X\rightarrow Y$ is $\mathcal{D}$-bounded if and only if $T$ is continuous.
\end{lemma}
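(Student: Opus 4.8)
The plan is to exploit the idempotent decomposition to reduce this bicomplex statement to the classical equivalence between boundedness and continuity of a linear operator on normed spaces. Throughout I assume, as in the functional-analytic framework of \cite{bc_functional}, that $X$ and $Y$ are bicomplex normed modules and that $T$ is a bicomplex-linear operator. Writing $X=e_1X\oplus e_2X$ and $Y=e_1Y\oplus e_2Y$, every element splits as $x=x_1e_1+x_2e_2$, the hyperbolic-valued norm decomposes as $|x|_h=\|x_1\|_1e_1+\|x_2\|_2e_2$ with $\|\cdot\|_1,\|\cdot\|_2$ genuine real-valued norms on the respective component spaces, and $T$ itself splits as $T=T_1e_1+T_2e_2$ with complex-linear component operators $T_j$. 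The first thing I would record is that, since $ae_1+be_2\in\mathcal{D}^+$ precisely when $a,b\geq0$, both the order relation $<_h$ and convergence in the hyperbolic norm are componentwise; this translates the whole statement into two simultaneous classical assertions.

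For the direction ``$\mathcal{D}$-bounded $\Rightarrow$ continuous'', suppose $|Tx|_h<_h\kappa|x|_h$ for all $x\in X$, with $\kappa=\kappa_1e_1+\kappa_2e_2\in\mathcal{D}^+$. Given a sequence $x^{(n)}\to x$ in $X$, meaning $|x^{(n)}-x|_h\to0$, linearity gives $|Tx^{(n)}-Tx|_h=|T(x^{(n)}-x)|_h<_h\kappa|x^{(n)}-x|_h$. Since the right-hand side tends to $0$ componentwise and the hyperbolic order squeezes $|Tx^{(n)}-Tx|_h$ between $0$ and it, continuity follows at once. This direction needs nothing beyond the order and multiplicative compatibility of $|\cdot|_h$ already recorded in the excerpt.

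The substantive direction is ``$T$ continuous $\Rightarrow$ $T$ is $\mathcal{D}$-bounded''. Here I would argue that continuity of $T$ in the hyperbolic norm is equivalent to continuity of each component $T_1,T_2$ as ordinary complex-linear maps between the normed spaces $(e_jX,\|\cdot\|_j)$ and $(e_jY,\|\cdot\|_j)$; this is exactly where the componentwise nature of convergence is used. Invoking the classical theorem that a continuous linear operator between normed spaces is bounded, I obtain constants $\kappa_1,\kappa_2\geq0$ with $\|T_jx_j\|_j\leq\kappa_j\|x_j\|_j$ for $j=1,2$. Setting $\kappa=\kappa_1e_1+\kappa_2e_2\in\mathcal{D}^+$ and reassembling yields $|Tx|_h=\|T_1x_1\|_1e_1+\|T_2x_2\|_2e_2<_h\kappa|x|_h$, which is the required $\mathcal{D}$-boundedness.

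The main obstacle I anticipate is not any single estimate but the bookkeeping needed to justify the decomposition rigorously: one must verify that the module structure makes $e_jX$ and $e_jY$ genuine normed spaces on which $T_j$ acts, and that the $\mathcal{D}$-valued norm and the order $<_h$ really decouple into the two real coordinates so that the classical result applies component by component. Once this dictionary between the hyperbolic-norm language and the pair of classical normed-space problems is in place, both implications are immediate. If one preferred to avoid the decomposition, the reverse direction could instead be proved directly by contradiction: assuming no such $\kappa$ exists forces unboundedness in at least one idempotent component, from which one extracts a sequence $x^{(n)}$ with $|x^{(n)}|_h\to0$ but $|Tx^{(n)}|_h$ failing to vanish in that component, contradicting continuity.
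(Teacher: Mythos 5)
The paper does not prove this lemma at all: it is stated immediately after the definition of $\mathcal{D}$-boundedness, both being recalled from the reference \cite{bc_functional} (Alpay et al.), so there is no in-paper argument to compare yours against. Your proof is the natural self-contained one and is essentially correct. The idempotent reduction is sound: for a $\mathbb{BC}$-module one has $x=e_1x+e_2x$, the hyperbolic norm and the order $<_h$ decouple into the two real coordinates (note that by the paper's convention $Z<_hW$ means $W-Z\in\mathcal{D}^+$, i.e.\ it is a non-strict inequality, so your componentwise bounds $\|T_jx_j\|_j\leq\kappa_j\|x_j\|_j$ do reassemble into $|Tx|_h<_h\kappa|x|_h$ without any issue at $x=0$), and each implication then follows from the classical boundedness--continuity equivalence for linear maps on normed spaces. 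Two small points deserve explicit mention. First, the statement as printed says nothing about $T$ being linear, yet both directions of your argument (and the result itself) require $\mathbb{BC}$-linearity; you correctly import this hypothesis from the framework of \cite{bc_functional}, but it should be flagged as an assumption rather than left tacit. Second, in the reverse direction you should verify (or cite) that continuity of $T$ in the hyperbolic-norm topology is equivalent to continuity of both components $T_1,T_2$ separately --- this holds because the topology induced by $|\cdot|_h$ coincides with the product of the two component topologies, which is exactly the ``dictionary'' you acknowledge needing. With those caveats made explicit, the proof is complete and, if anything, more informative than the paper, which supplies none.
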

\begin{lemma}\rm{\cite{bicomplex ghf}}
    Let $U(Z)={}_{p} F_{q}\left[\begin{matrix}&\alpha_1, &\alpha_2,.... &\alpha_{p};\\&\beta_1,&\beta_2,.....&\beta_{q};&\end{matrix}Z\right]$ be a bicomplex generalized hypergeometric function with $p\leq q+1$, then $U(Z)$ is a solution of the differential equation
    \begin{align}\label{eq:in}
        \frac{d}{dZ} \prod\limits_{j=1}^{q}\left(Z\frac{d}{dZ}+\beta_j-1\right)U(Z)-\prod\limits_{i=1}^{p}\left(Z\frac{d}{dZ}+\alpha_i\right)U(Z)=0.
    \end{align}
\end{lemma}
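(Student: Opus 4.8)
The plan is to reduce the bicomplex identity \eqref{eq:in} to a formal statement about the coefficients of the defining power series, and to exploit the idempotent decomposition to handle the analytic subtleties. Writing $U(Z)=\sum_{n=0}^{\infty}c_n Z^n$ with bicomplex coefficients $c_n$ built from bicomplex Pochhammer symbols and the bicomplex gamma function, the splitting \eqref{eq:g} propagates to the coefficients as $c_n=c_n^{(1)}e_1+c_n^{(2)}e_2$, so that $U(Z)=U_1(z_1)e_1+U_2(z_2)e_2$ where $U_1,U_2$ are ordinary complex ${}_pF_q$ series in the projected variables with the projected parameters. First I would record that both differentiation and multiplication by $Z$ act componentwise on such series — a consequence of $e_1^2=e_1$, $e_2^2=e_2$, $e_1e_2=0$ together with the definition of bicomplex differentiation — so that the Euler operator $\theta:=Z\frac{d}{dZ}$ and every factor $\theta+\beta_j-1$ and $\theta+\alpha_i$ respects the splitting. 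Hence the operator in \eqref{eq:in} decomposes into its two complex projections, and it is enough to establish the identity.

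The core is then a formal computation that, because $\mathbb{BC}$ is commutative, may be carried out directly on the series. Using $\theta Z^n=nZ^n$ (valid since the scalar $n$ commutes with every bicomplex number), I would apply $\prod_{j=1}^{q}(\theta+\beta_j-1)$ termwise, differentiate termwise, and shift the index $n\mapsto n+1$ to obtain
\begin{align*}
\frac{d}{dZ}\prod_{j=1}^{q}\left(Z\frac{d}{dZ}+\beta_j-1\right)U(Z)=\sum_{n=0}^{\infty}(n+1)\,c_{n+1}\prod_{j=1}^{q}(n+\beta_j)\,Z^{n},
\end{align*}
while on the other hand
\begin{align*}
\prod_{i=1}^{p}\left(Z\frac{d}{dZ}+\alpha_i\right)U(Z)=\sum_{n=0}^{\infty}c_n\prod_{i=1}^{p}(n+\alpha_i)\,Z^{n}.
\end{align*}
The two series agree coefficient by coefficient exactly because the Pochhammer recurrence $(\alpha)_{n+1}=(\alpha)_n(\alpha+n)$ forces
\begin{align*}
(n+1)\,c_{n+1}\prod_{j=1}^{q}(n+\beta_j)=c_n\prod_{i=1}^{p}(n+\alpha_i),
\end{align*}
which is precisely the defining ratio of consecutive coefficients of ${}_pF_q$. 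Subtracting the two displays yields \eqref{eq:in}.

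The main obstacle is analytic rather than algebraic: one must justify the termwise differentiation and the index shift inside the region where the series represents a genuine holomorphic function. Here the idempotent picture pays off, since the bicomplex domain of convergence is the product of the two complex discs on which $U_1$ and $U_2$ converge, and on each factor termwise differentiation of a power series is legitimate; the hypothesis $p\leq q+1$ is exactly what guarantees this region is nonempty (entire for $p\leq q$, and of radius one for $p=q+1$). A secondary point requiring care is that bicomplex differentiation is defined only along directions avoiding the zero-divisor cone $\mathcal{O}_2$; but since every operation above is componentwise on the two complex projections, no division by a zero divisor ever occurs and the classical one-variable theory applies verbatim in each slot.
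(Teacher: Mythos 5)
The paper states this lemma as an imported result from \cite{bicomplex ghf} and gives no proof of its own, so there is nothing to contrast your argument with inside this text. Your proof is correct and follows the standard route one would expect (and that the cited reference uses): idempotent splitting so that $\frac{d}{dZ}$ and the Euler operator act componentwise, followed by termwise application of $\prod_j(\theta+\beta_j-1)$ and $\prod_i(\theta+\alpha_i)$ and matching coefficients via the Pochhammer ratio $(n+1)c_{n+1}\prod_j(n+\beta_j)=c_n\prod_i(n+\alpha_i)$, with convergence for $p\le q+1$ justifying the termwise manipulations; the only implicit hypothesis worth flagging is that the idempotent components of the $\beta_j$ avoid non-positive integers, which is already built into the definition of ${}_pF_q$.
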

\begin{lemma}\rm{\cite{bicomplex ghf}}\label{ls}
     The bicomplex confluent hypergeometric function can be expressed through its integral representation as follows
     \begin{align*}
         {}_1F_{1}(\alpha;\beta;Z)=\frac{\Gamma_b(\beta)}{\Gamma_b(\alpha)\Gamma_b(\beta-\alpha)}\int_{C}t^{\alpha-1}(1-t)^{\beta-\alpha-1}\exp(Zt)dt.
     \end{align*}
\end{lemma}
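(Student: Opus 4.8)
The plan is to reduce the bicomplex identity to two copies of the classical complex confluent hypergeometric integral representation by exploiting the orthogonal idempotents $e_1,e_2$. Writing $\alpha=\alpha_1 e_1+\alpha_2 e_2$, $\beta=\beta_1 e_1+\beta_2 e_2$ and $Z=z_1 e_1+z_2 e_2$, and using $e_1^2=e_1$, $e_2^2=e_2$, $e_1 e_2=0$, I would first show that the defining series $\sum_{n\ge 0}\frac{(\alpha)_n}{(\beta)_n}\frac{Z^n}{n!}$ splits channelwise. Since the bicomplex Pochhammer symbol is a finite product and multiplication acts componentwise in the idempotent representation, $(\alpha)_n=(\alpha_1)_n e_1+(\alpha_2)_n e_2$, and likewise $Z^n=z_1^n e_1+z_2^n e_2$; combined with the decomposition of the bicomplex gamma function in \eqref{eq:g}, this yields
\[
{}_1F_1(\alpha;\beta;Z)={}_1F_1(\alpha_1;\beta_1;z_1)\,e_1+{}_1F_1(\alpha_2;\beta_2;z_2)\,e_2 .
\]

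Next I would invoke the classical Euler-type formula in each complex channel. Under the convergence hypotheses $\Re(\beta_k)>\Re(\alpha_k)>0$ for $k=1,2$, one has for ordinary complex arguments
\[
{}_1F_1(\alpha_k;\beta_k;z_k)=\frac{\Gamma(\beta_k)}{\Gamma(\alpha_k)\Gamma(\beta_k-\alpha_k)}\int_{0}^{1}\tau^{\alpha_k-1}(1-\tau)^{\beta_k-\alpha_k-1}e^{z_k\tau}\,d\tau .
\]
I would then recombine the two scalar formulas using the idempotent decomposition of $\Gamma_b$: because sums, products and inverses of bicomplex numbers all act separately on the $e_1,e_2$ channels, the prefactor splits as $\frac{\Gamma_b(\beta)}{\Gamma_b(\alpha)\Gamma_b(\beta-\alpha)}=\frac{\Gamma(\beta_1)}{\Gamma(\alpha_1)\Gamma(\beta_1-\alpha_1)}e_1+\frac{\Gamma(\beta_2)}{\Gamma(\alpha_2)\Gamma(\beta_2-\alpha_2)}e_2$.

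Finally I would identify the two scalar integrals as the projections of a single bicomplex contour integral. Choosing the contour $C=(C_1,C_2)$ with both projection curves equal to the segment $[0,1]$, the decomposition of the bicomplex line integral recorded in the excerpt gives $\int_C g(\phi)\,d\gamma=\big[\int_{0}^{1}g(\tau)\,d\tau\big]e_1+\big[\int_{0}^{1}g(\tau)\,d\tau\big]e_2$. Substituting $g(t)=t^{\alpha-1}(1-t)^{\beta-\alpha-1}\exp(Zt)$ and using the channelwise behaviour of the bicomplex factors, $t^{\alpha-1}=t_1^{\alpha_1-1}e_1+t_2^{\alpha_2-1}e_2$, together with $(1-t)^{\beta-\alpha-1}=(1-t_1)^{\beta_1-\alpha_1-1}e_1+(1-t_2)^{\beta_2-\alpha_2-1}e_2$ and $\exp(Zt)=e^{z_1 t_1}e_1+e^{z_2 t_2}e_2$, the bicomplex integrand collapses to the two scalar integrands, and multiplying by the split prefactor reproduces the claimed identity. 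The main obstacle is precisely this last step: making rigorous the idempotent decomposition of the bicomplex powers $t^{\alpha-1}$ and $(1-t)^{\beta-\alpha-1}$ for bicomplex base and exponent, which requires a consistent choice of bicomplex logarithm on each channel and a verification that the chosen contour stays clear of the zero-divisor cone $\mathcal{O}_2$ so that every factor is well defined; once the powers are shown to act channelwise, the remainder is routine bookkeeping with the orthogonal idempotents.
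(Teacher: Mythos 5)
The paper does not actually prove this lemma: it is imported verbatim from the cited reference \cite{bicomplex ghf} and used as a known input (in the asymptotic expansion of $\mathcal{J}_\mathcal{V}$), so there is no in-paper argument to compare against line by line. That said, your proposal is the natural and essentially correct reconstruction, and it follows exactly the componentwise strategy this paper uses for every analogous statement (e.g.\ Theorems \ref{eq:s3}, \ref{th:ad}, \ref{th:mn}): split ${}_1F_1$, $\Gamma_b$, the bicomplex powers, and the contour integral along the orthogonal idempotents $e_1,e_2$, invoke the classical Euler integral ${}_1F_1(\alpha_k;\beta_k;z_k)=\frac{\Gamma(\beta_k)}{\Gamma(\alpha_k)\Gamma(\beta_k-\alpha_k)}\int_0^1\tau^{\alpha_k-1}(1-\tau)^{\beta_k-\alpha_k-1}e^{z_k\tau}d\tau$ in each channel, and reassemble. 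Two small remarks: the convergence hypotheses $\Re(\beta_k)>\Re(\alpha_k)>0$ that you impose are genuinely needed and are silently omitted in the paper's statement of the lemma, so you are right to make them explicit; and your worry about the contour avoiding $\mathcal{O}_2$ is slightly misplaced --- since the integrand is defined channelwise from the outset, the only delicate points are the endpoints $t_k=0,1$, which are already controlled by those convergence hypotheses rather than by any zero-divisor consideration.
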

\section{Bicomplex Bessel function}
This section presents the bicomplex Bessel function, exploring its convergence within bicomplex space. Furthermore, we derive the recurrence relations associated with this function, offering a more comprehensive understanding of its characteristics. We define the bicomplex Bessel function as follows:
\begin{align}\label{eq:gh}
    \mathcal{J}_\mathcal{V}(Z)=
  \sum_{s=0}^{\infty}\frac{(-1)^s}{\Gamma_b{(\mathcal{V}+s+1)}s!}\left(\frac{Z}{2}\right)^{\mathcal{V}+2s},
\end{align}
where $\mathcal{V}=\nu_1e_1+\nu_2e_2,Z=z_1e_1+z_2e_2\in{\mathbb{BC}}$.
The following theorem offers substantial justification for the definition of the bicomplex Bessel function.
\begin{theo}\label{eq:s3}
    Let  $Z=z_1e_1+z_2e_2, \mathcal{V}=\nu_1e_1+\nu_2e_2$ be elements of $\mathbb{BC}$, where $e_1=\frac{1+k}{2}$, $e_2=\frac{1-k}{2}$, then idempotent form of the bicomplex Bessel function is given by
 \begin{align}\label{eq:fa}
     \mathcal{J}_\mathcal{V}(Z)=J_{\nu_1}(z_1)e_1+ J_{\nu_2}(z_2)e_2,
 \end{align}
  and the series $Z^{-\mathcal{V}} \mathcal{J}_\mathcal{V}(Z)$ converges absolutely in the hyperbolic sense for all $Z\in\mathbb{BC}$.
 \end{theo}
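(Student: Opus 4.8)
The plan is to reduce everything to the classical (complex) Bessel function via the idempotent calculus, exploiting the orthogonality relations $e_1^2=e_1$, $e_2^2=e_2$, $e_1e_2=e_2e_1=0$ together with the idempotent form of the bicomplex gamma function in \eqref{eq:g}.

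First I would establish the idempotent form \eqref{eq:fa}. The central algebraic observation is that since $e_1,e_2$ are orthogonal idempotents, every power of $Z=z_1e_1+z_2e_2$ splits componentwise; more precisely, interpreting the bicomplex power through the idempotent/exponential definition $Z^{\mathcal V}=\exp(\mathcal V\log Z)$ one obtains
\begin{align*}
\left(\frac{Z}{2}\right)^{\mathcal V+2s}=\left(\frac{z_1}{2}\right)^{\nu_1+2s}e_1+\left(\frac{z_2}{2}\right)^{\nu_2+2s}e_2.
\end{align*}
Combining this with \eqref{eq:g}, which gives $\Gamma_b(\mathcal V+s+1)=\Gamma(\nu_1+s+1)e_1+\Gamma(\nu_2+s+1)e_2$, and using that the reciprocal of $ae_1+be_2$ is $a^{-1}e_1+b^{-1}e_2$, each term of the defining series \eqref{eq:gh} collapses to
\begin{align*}
\frac{(-1)^s}{\Gamma(\nu_1+s+1)s!}\left(\frac{z_1}{2}\right)^{\nu_1+2s}e_1+\frac{(-1)^s}{\Gamma(\nu_2+s+1)s!}\left(\frac{z_2}{2}\right)^{\nu_2+2s}e_2,
\end{align*}
the cross terms vanishing because $e_1e_2=0$. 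Summing over $s$ and recognizing the two component series as the classical expansion \eqref{eq:5} yields \eqref{eq:fa}.

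For the convergence statement I would multiply \eqref{eq:fa} by $Z^{-\mathcal V}=z_1^{-\nu_1}e_1+z_2^{-\nu_2}e_2$, again using $e_1e_2=0$, to obtain
\begin{align*}
Z^{-\mathcal V}\mathcal J_{\mathcal V}(Z)=z_1^{-\nu_1}J_{\nu_1}(z_1)e_1+z_2^{-\nu_2}J_{\nu_2}(z_2)e_2.
\end{align*}
Each component is the entire function $z^{-\nu}J_\nu(z)=\sum_{s\ge0}\frac{(-1)^s}{\Gamma(\nu+s+1)s!}\frac{z^{2s}}{2^{2s+\nu}}$, the offending factor $z^{\nu}$ having cancelled so that no branch issue remains. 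The key point is that absolute convergence \emph{in the hyperbolic sense} means convergence in $\mathcal D$ of the series $\sum_s|W_s|_h$ of hyperbolic moduli of the terms $W_s$; since $|W_s|_h$ splits into its $e_1$- and $e_2$-components, this is equivalent to the ordinary absolute convergence of the two complex series of moduli. I would then apply the ratio test to each: the modulus of the ratio of consecutive terms is $\frac{|z|^2}{4(s+1)|\nu+s+1|}\to 0$, so both series converge absolutely for every $z_1,z_2\in\mathbb C$, that is, for all $Z\in\mathbb{BC}$.

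The main obstacle I anticipate is not the convergence estimate, which is routine once the problem is projected onto components, but rather the careful justification of the algebraic identities involving the bicomplex power $Z^{\mathcal V+2s}$ with a \emph{bicomplex} exponent. One must fix the idempotent/exponential definition of $Z^{\mathcal V}$, check that it is consistent with the componentwise formula, and confirm that the rearrangement of the defining series into two independent scalar series is legitimate — this last step being underwritten precisely by the orthogonality $e_1e_2=0$ and by the absolute convergence established in the final step.
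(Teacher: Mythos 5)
Your proposal is correct and follows essentially the same route as the paper: both reduce the series to its idempotent components via \eqref{eq:g} and the orthogonality $e_1e_2=0$ to obtain \eqref{eq:fa}, and both establish convergence of $Z^{-\mathcal V}\mathcal J_{\mathcal V}(Z)$ by a ratio test applied componentwise (the paper packages this as the hyperbolic ratio test computing an infinite hyperbolic radius of convergence for the series in $Y=Z^2/4$, which is the same computation as your term-ratio estimate). Your added remark on pinning down the meaning of the bicomplex power $Z^{\mathcal V+2s}$ is a point the paper passes over silently, but it does not change the argument.
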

 \begin{proof}
 Using the idempotent form of bicomplex gamma function \eqref{eq:g} and \eqref{eq:gh}, we have
\begin{align}
     \mathcal{J}_\mathcal{V}(Z)&=\sum_{s=0}^{\infty}\frac{(-1)^s}{\Gamma_b{(\nu_1e_1+\nu_2e_2+s+1)s!}}\left(\frac{z_1e_1+z_2e_2}{2}\right)^{\nu_1e_1+\nu_2e_2+2s}\nonumber\\&=\sum_{s=0}^{\infty}\frac{(-1)^s}{[\Gamma(\nu_1+s+1)e_1+\Gamma(\nu_2+s+1)e_2]s!}\left[\left(\frac{z_1}{2}\right)e_1+\left(\frac{z_2}{2}\right)e_2\right]^{\nu_1e_1+\nu_2e_2+2s}\nonumber \\
     &=\sum_{s=0}^{\infty}\frac{(-1)^s[\Gamma(\nu_1+s+1)e_2+\Gamma(\nu_2+s+1)e_1]}{\Gamma(\nu_1+s+1)\Gamma(\nu_2+s+1)s!}\left[\left(\frac{z_1}{2}\right)^{\nu_1+2s}e_1+\left(\frac{z_2}{2}\right)^{\nu_2+2s}e_2\right]\nonumber \\
    &=\sum_{s=0}^{\infty}\frac{(-1)^s}{\Gamma{(\nu_1+s+1)s!}}\left(\frac{z_1}{2}\right)^{\nu_1+2s}e_1+\sum_{r=0}^{\infty}\frac{(-1)^r}{\Gamma{(\nu_2+s+1)s!}}\left(\frac{z_2}{2}\right)^{\nu_2+2s}e_2\nonumber\\
    &=J_{\nu_1}(z_1)e_1+ J_{\nu_2}(z_2)e_2.
\end{align}
Since $\nu_1,\nu_2,z_1,z_2\in\mathbb{C}$, then Bessel functions $ J_{\nu_1}(z_1)$ and  $J_ {\nu_2}(z_2)$ are well defined in complex space and bicomplex Bessel functions can be expresse as $\mathcal{J} _ {\mathcal{V}}(Z) =J_{\nu_1}(z_1)e_1+ J_{\nu_2}(z_2)e_2$,  which represents the idempotent form of the bicomplex Bessel function $\mathcal{J} _ {\mathcal{V}}(Z)$. Next, we will examine the convergence of the series $Z^{-\mathcal{V}} \mathcal{J}_{\mathcal{V}}(Z)$ using hyperbolic ratio test \cite{bc_root}. Let $R=R_1e_1+R_2e_2$ denote the hyperbolic radius of convergent of the power series
\begin{align*}
    Z^{-\mathcal{V}} \mathcal{J}_\mathcal{V}(Z)=\sum_{s=0}^{\infty}\frac{(-1)^s}{\Gamma_b{(\mathcal{V}+s+1)}s!}\left(\frac{Z}{2}\right)^{2s} =\sum_{s=0}^{\infty}a_sY^s=\sum_{s=0}^{\infty}a_{1s}y_1^se_1+\sum_{s=0}^{\infty}a_{2s}y_2^se_2,
\end{align*}
where $a_{1s}=\frac{(-1)^s}{\Gamma(\nu_1+s+1)s!}$, $a_{2s}=\frac{(-1)^s}{\Gamma(\nu_2+s+1)s!}$ and $Y=\frac{Z^2}{4}$. Then,
\begin{align}\label{eq:mi}
     R=R_1e_1+R_2e_2&=\lim_{s\to \infty} \sup\frac{|a_s|_h}{|a_{s+1}|_h}\nonumber\\
    &=\lim_{s\to \infty} \sup\frac{|a_{1s}|}{|a_{1s+1}|}e_1+\lim_{s\to \infty} \sup\frac{|a_{2s}|}{|a_{2s+1}|}e_2\nonumber\\
    &=\lim_{s\to \infty} \sup\left|(s+1)(\nu_1+s+1)\right | e_1+ \lim_{s\to \infty} \sup\left|(s+1)(\nu_2+s+1)\right | e_2.
\end{align}
From \eqref{eq:mi}, we have $R_1=\infty$ and $R_2=\infty$. Then by hyperbolic ratio test \cite{bc_root}, the series $\sum\limits_{s=0}^{\infty}a_sY^s$ is absolutely hyperbolic convergent for all $Y\in\mathbb{BC}$ and that implies $Z^{-\mathcal{V}} \mathcal{J}_{\mathcal{V},C}(Z)$ absolutely hyperbolic convergent in bicomplex space.
Therefore, the proof has been established.
\end{proof}
\begin{theo}
 Suppose that  $Z=z_1e_1+z_2e_2, \mathcal{V}=\nu_1e_1+\nu_2e_2=-\frac{l_1+l_2}{2} +j\frac{l_1-l_2}{2i}\in{\mathbb{BC}}$, where $e_1=\frac{1+k}{2},e_2=\frac{1-k}{2}$ and  $l_1,l_2\in \mathbb{N}\cup \{0\}$, then $\mathcal{J}_{\mathcal{V}}(Z)=(-1)^{-\mathcal{V}}\mathcal{J}_{-\mathcal{V}}(Z)$.
\end{theo}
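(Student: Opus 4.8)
The plan is to reduce the bicomplex identity to its two scalar projections and then invoke the classical negative-integer reflection formula for the ordinary Bessel function in each component. The whole argument rides on the idempotent splitting already recorded in Theorem \ref{eq:s3}, so no new convergence analysis is needed.

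First I would read off the scalar orders hidden in $\mathcal{V}$. Writing $\mathcal{V}=\lambda_1+j\lambda_2$ with $\lambda_1=-\frac{l_1+l_2}{2}$ and $\lambda_2=\frac{l_1-l_2}{2i}$, and applying the projections $\mathcal{S}_1(\mathcal{V})=\lambda_1-i\lambda_2$ and $\mathcal{S}_2(\mathcal{V})=\lambda_1+i\lambda_2$, a short computation gives $\nu_1=-l_1$ and $\nu_2=-l_2$. Hence $\mathcal{V}=-l_1 e_1-l_2 e_2$ and $-\mathcal{V}=l_1 e_1+l_2 e_2$, so that in each idempotent slot the order is $\pm$ a non-negative integer. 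This structural observation is precisely what lets the classical scalar theory apply.

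Next, by the idempotent form \eqref{eq:fa}, I would write $\mathcal{J}_{\mathcal{V}}(Z)=J_{-l_1}(z_1)e_1+J_{-l_2}(z_2)e_2$ and $\mathcal{J}_{-\mathcal{V}}(Z)=J_{l_1}(z_1)e_1+J_{l_2}(z_2)e_2$. The classical reflection formula $J_{-n}(z)=(-1)^n J_n(z)$, valid for every $n\in\mathbb{N}\cup\{0\}$ (the first terms of the series drop out because $\Gamma(\nu+s+1)$ has poles at the negative integers), applies to each projection since $l_1,l_2\in\mathbb{N}\cup\{0\}$. This yields $J_{-l_m}(z_m)=(-1)^{l_m}J_{l_m}(z_m)$ for $m=1,2$, and therefore $\mathcal{J}_{\mathcal{V}}(Z)=(-1)^{l_1}J_{l_1}(z_1)e_1+(-1)^{l_2}J_{l_2}(z_2)e_2$.

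Finally I would interpret the bicomplex prefactor $(-1)^{-\mathcal{V}}$ through its idempotent representation. Since $-\mathcal{V}=l_1 e_1+l_2 e_2$, the componentwise convention for bicomplex powers gives $(-1)^{-\mathcal{V}}=(-1)^{l_1}e_1+(-1)^{l_2}e_2$. Multiplying by $\mathcal{J}_{-\mathcal{V}}(Z)$ and using $e_1^2=e_1$, $e_2^2=e_2$, $e_1 e_2=0$ collapses the product to $(-1)^{l_1}J_{l_1}(z_1)e_1+(-1)^{l_2}J_{l_2}(z_2)e_2$, which coincides with the expression for $\mathcal{J}_{\mathcal{V}}(Z)$ from the previous step, establishing the claim. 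I expect the only genuine subtlety to be pinning down the meaning of $(-1)^{-\mathcal{V}}$ as a bicomplex quantity; once the componentwise interpretation in the idempotent basis is stated explicitly, everything else is the routine splitting into the two $\mathbb{C}(i)$ projections together with the scalar identity, so I would make that convention precise before carrying out the final multiplication.
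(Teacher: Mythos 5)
Your proposal is correct and follows essentially the same route as the paper: both reduce to $\nu_1=-l_1$, $\nu_2=-l_2$ via the idempotent projections, drop the initial terms killed by the gamma poles, and conclude componentwise. The only cosmetic difference is that the paper re-derives the scalar identity $J_{-l}(z)=(-1)^{l}J_{l}(z)$ by an explicit index shift inside the series rather than citing it, and your explicit remark that $(-1)^{-\mathcal{V}}$ must be read as $(-1)^{l_1}e_1+(-1)^{l_2}e_2$ is a point the paper leaves implicit.
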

\begin{proof}
    Using idempotent representation of $\mathcal{V}$ we obtain $\nu_1=-l_1$ and $\nu_2=-l_2$ where $l_1,l_2\in \mathbb{N}\cup \{0\}$.
    Since $l_1$ and $l_2$ are non negative integer, the first $p-1$ terms in the infinite series of $\mathcal{J}_{\mathcal{V}}(Z)$ vanish because the pole of the gamma function in the denominator  and we obtain
\begin{align}
     \mathcal{J}_{\mathcal{V}}(Z)&=\sum_{s=0}^{\infty}\frac{(-1)^s}{\Gamma_b{(\mathcal{V}+s+1)}s!}\left(\frac{Z}{2}\right)^{\mathcal{V}+2s}\nonumber\\
  &=\sum_{s=0}^{\infty}\frac{(-1)^s}{\Gamma{(s-l_1+1)s!}}\left(\frac{z_1}{2}\right)^{2s-l_1}e_1+\sum_{s=0}^{\infty}\frac{(-1)^s}{\Gamma{(s-l_2+1)s!}}\left(\frac{z_2}{2}\right)^{2s-l_2}e_2\nonumber\\
  &=\sum_{s=l_1}^{\infty}\frac{(-1)^s}{\Gamma{(s-l_1+1)s!}}\left(\frac{z_1}{2}\right)^{2s-l_1}e_1+\sum_{s=l_2}^{\infty}\frac{(-1)^s}{\Gamma{(s-l_2+1)s!}}\left(\frac{z_2}{2}\right)^{2s-l_2}e_2\nonumber\\
  &=\sum_{s=0}^{\infty}\frac{(-1)^{s+l_1}}{\Gamma{(s+1)}(s+l_1)!}\left(\frac{z_1}{2}\right)^{2s+l_1}e_1+\sum_{s=0}^{\infty}\frac{(-1)^{s+l_2}}{\Gamma{(s+1)}(s+l_2)!}\left(\frac{z_2}{2}\right)^{2s+l_2}e_2\nonumber\\
  &=\sum_{s=0}^{\infty}\frac{(-1)^{s+l_1}}{\Gamma(l_1+s+1)s!}\left(\frac{z_1}{2}\right)^{2s+l_1}e_1+\sum_{s=0}^{\infty}\frac{(-1)^{s+l_2}}{\Gamma{(l_2+s+1)}s!}\left(\frac{z_2}{2}\right)^{2s+l_2}e_2\nonumber\\
  &=(-1)^{-\mathcal{V}}\sum_{s=0}^{\infty}\frac{(-1)^s}{\Gamma_b{(-\mathcal{V}+s+1)}s!}\left(\frac{Z}{2}\right)^{-\mathcal{V}+2s}\nonumber\\
  &=(-1)^{-\mathcal{V}}\mathcal{J}_{-\mathcal{V}}(Z)\nonumber.
\end{align}
Hence, the proof is complete.
\end{proof}
\begin{theo}
   Let  $Z=z_1e_1+z_2e_2, \mathcal{V}=\nu_1e_1+\nu_2e_2=\alpha_1 +j\alpha_2, M=me_1+ne_2 \in{\mathbb{BC}}$, where $e_1=\frac{1+k}{2},e_2=\frac{1-k}{2}$ and $m,n\in\mathbb{Z}^+$, then the bicomplex Bessel function satisfies the following recurrence relations:
  \begin{description}
         \item[(i)] $Z\mathcal{J}_{\mathcal{V}}(Z)=2(\mathcal{V}+1)\mathcal{J}_{\mathcal{V}+1}(Z)-ZJ_{\mathcal{V}+2}(Z)$;
         \item[(ii)] $Z^2\mathcal{J}_{\mathcal{V}}(Z)=4\mathcal{J}_{\mathcal{V}+2}(Z)-4Z(\mathcal{V}+2)\mathcal{J}_{\mathcal{V}+3}(Z)+Z^2\mathcal{J}_{\mathcal{V}+4}(Z);$
         \item[(iii)] $\mathcal{J}_{\mathcal{V}+M}(Z)+\mathcal{J}_{\mathcal{V}+\overline{M}}(Z)=\mathcal{J}_{\mathcal{V}+m}(Z)+\mathcal{J}_{\mathcal{V}+n}(Z)$
      \end{description}
\end{theo}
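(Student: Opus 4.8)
The plan is to reduce all three relations to statements about ordinary complex Bessel functions via the idempotent splitting established in Theorem~\ref{eq:s3}, namely $\mathcal{J}_{\mathcal{V}}(Z)=J_{\nu_1}(z_1)e_1+J_{\nu_2}(z_2)e_2$ of \eqref{eq:fa}. Because $e_1,e_2$ are orthogonal idempotents with $e_1e_2=0$, $e_i^2=e_i$ and $e_1+e_2=1$, every sum and product of bicomplex Bessel functions decouples into an $e_1$-channel, carried entirely by classical Bessel functions of variable $z_1$ and order $\nu_1$, and an $e_2$-channel in $z_2,\nu_2$. Hence it suffices to verify each identity as a pair of classical complex-variable identities and then recombine by multiplying the two channels by $e_1$ and $e_2$ and adding.

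For (i) I would start from the standard first-kind recurrence $J_{\mu-1}(z)+J_{\mu+1}(z)=\tfrac{2\mu}{z}J_{\mu}(z)$; replacing $\mu$ by $\mu+1$ and multiplying by $z$ gives $zJ_{\mu}(z)=2(\mu+1)J_{\mu+1}(z)-zJ_{\mu+2}(z)$. Applying this in the $e_1$-slot with $(\mu,z)=(\nu_1,z_1)$ and in the $e_2$-slot with $(\mu,z)=(\nu_2,z_2)$, and noting that $Z=z_1e_1+z_2e_2$, $\mathcal{V}+1=(\nu_1+1)e_1+(\nu_2+1)e_2$ multiply through the splitting, recombination yields (i).

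For (ii) I would iterate relation (i). Multiplying the $e_1$-component of (i) by $z_1$ and then substituting (i) again, now with the order raised by one, so as to remove the intermediate $z_1J_{\nu_1+1}(z_1)$ term, produces a second-order relation expressing $z_1^2J_{\nu_1}(z_1)$ through Bessel functions of orders $\nu_1+2,\nu_1+3,\nu_1+4$; performing the identical reduction in the $e_2$-slot and recombining with $e_1,e_2$ delivers (ii). This is the computationally heaviest step: one must propagate the order-dependent coefficients arising from $(\mathcal{V}+1)$ and $(\mathcal{V}+2)$ correctly through the substitution and keep the powers of $Z$ aligned across the two idempotent channels.

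Finally, (iii) is essentially algebraic. The key observation is the behaviour of the bar-conjugation on $M=me_1+ne_2$: this conjugation interchanges the two idempotent projections, sending $z_1e_1+z_2e_2$ to $\overline{z_2}e_1+\overline{z_1}e_2$, and since $m,n$ are real one obtains $\overline{M}=ne_1+me_2$; the scalars $m,n$, by contrast, embed diagonally, so that $\mathcal{V}+m=(\nu_1+m)e_1+(\nu_2+m)e_2$ and likewise for $\mathcal{V}+n$. Expanding all four terms through the idempotent decomposition, the left-hand side becomes $[J_{\nu_1+m}(z_1)+J_{\nu_1+n}(z_1)]e_1+[J_{\nu_2+n}(z_2)+J_{\nu_2+m}(z_2)]e_2$, which coincides with the right-hand side after invoking commutativity of addition in each channel; thus (iii) follows at once. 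The one point that genuinely requires care is verifying this component-swapping behaviour of $\overline{M}$, which I expect to be the main conceptual subtlety, the remaining manipulations being routine.
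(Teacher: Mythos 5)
Your route for (i) and (ii) is genuinely different from the paper's: the paper works directly with the bicomplex series \eqref{eq:gh}, using $\Gamma_b(\mathcal{V}+s+2)=(\mathcal{V}+s+1)\Gamma_b(\mathcal{V}+s+1)$, the split $(\mathcal{V}+s+1)=(\mathcal{V}+1)+s$ and index shifts, whereas you project onto the idempotent components via \eqref{eq:fa} and import the classical recurrence $J_{\mu-1}(z)+J_{\mu+1}(z)=\frac{2\mu}{z}J_{\mu}(z)$. Both are legitimate; yours buys economy (no series bookkeeping, everything is inherited from Watson) at the cost of leaning entirely on Theorem \ref{eq:s3}, while the paper's is self-contained in the bicomplex setting. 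For (iii) your argument is essentially identical to the paper's, including the key observation $\overline{M}=ne_1+me_2$, which you verify correctly. Part (i) goes through exactly as you describe.

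There is, however, a concrete obstruction in (ii): your iteration, carried out faithfully, does \emph{not} land on the stated identity. From $zJ_{\nu}=2(\nu+1)J_{\nu+1}-zJ_{\nu+2}$ one gets
\begin{align*}
z^{2}J_{\nu}(z)=4(\nu+1)(\nu+2)J_{\nu+2}(z)-4(\nu+2)\,zJ_{\nu+3}(z)+z^{2}J_{\nu+4}(z),
\end{align*}
so the leading coefficient is $4(\mathcal{V}+1)(\mathcal{V}+2)$, not $4$. The identity as printed in (ii) is in fact false: for $\nu=-\tfrac12$ the right-hand side minus the left-hand side equals $J_{3/2}(z)$ times a nonzero constant, while the corrected coefficient $4(\nu+1)(\nu+2)=3$ makes the identity hold. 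The paper's own proof exhibits the factor $(\mathcal{V}+1)(\mathcal{V}+2)$ explicitly in its penultimate display and then silently drops it when writing $4\mathcal{J}_{\mathcal{V}+2}(Z)$. So your plan proves a corrected version of (ii) rather than (ii) itself; you should state the corrected identity (or note the misprint) rather than attempt to force the computation to match the printed formula. A minor secondary point: to reach order $\mathcal{V}+4$ you need a third application of (i) to eliminate the leftover $z^{2}J_{\nu+2}$ term, not just the two substitutions your sketch mentions.
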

\begin{proof}(i)
     Based on the series representation of the bicomplex Bessel function given in equation \eqref{eq:gh}, we have
     \begin{align}\label{eq:sl}
         Z\mathcal{J}_{\mathcal{V}}(Z)&=2\sum_{s=0}^{\infty}\frac{(-1)^s}{\Gamma_b{(\mathcal{V}+s+1)}s!}\left(\frac{Z}{2}\right)^{\mathcal{V}+2s+1}\nonumber\\
    &=2\sum_{s=0}^{\infty}\frac{(-1)^s(\mathcal{V}+s+1)}{\Gamma_b{(\mathcal{V}+s+2)}s!}\left(\frac{Z}{2}\right)^{\mathcal{V}+2s+1}\nonumber\\
         &=2(\mathcal{V}+1)\sum_{s=0}^{\infty}\frac{(-1)^s}{\Gamma_b{(\mathcal{V}+s+2)}s!}\left(\frac{Z}{2}\right)^{\mathcal{V}+2s+1}+2\sum_{s=0}^{\infty}\frac{(-1)^s}{\Gamma_b{(\mathcal{V}+s+2)}(s-1)!}\left(\frac{Z}{2}\right)^{\mathcal{V}+2s+1}\nonumber\\
         &=2(\mathcal{V}+1)\mathcal{J}_{\mathcal{V}+1}(Z)+\sum_{s=0}^{\infty}\frac{(-1)^{s+1}}{\Gamma_b{(\mathcal{V}+s+3)}s!}\left(\frac{Z}{2}\right)^{\mathcal{V}+2s+2}\nonumber\\
         &=2(\mathcal{V}+1)\mathcal{J}_{\mathcal{V}+1}(Z)-Z\mathcal{J}_{\mathcal{V}+2}(Z).
     \end{align}
    Hence the proof of (i) is complete.\\
    (ii) Using \eqref{eq:gh}, we get
        \begin{align*}
           &Z^2\mathcal{J}_{\mathcal{V}}(Z)\\&=Z^2\sum_{s=0}^{\infty}\frac{(-1)^s}{\Gamma_b{(\mathcal{V}+s+1)}s!}\left(\frac{Z}{2}\right)^{\mathcal{V}+2s}\\
           &=Z^2\sum_{s=0}^{\infty}\frac{(-1)^s(\mathcal{V}+s+1)(\mathcal{V}+s+2)}{\Gamma_b{(\mathcal{V}+s+3)}s!}\left(\frac{Z}{2}\right)^{\mathcal{V}+2s}\\
           &=4\sum_{s=0}^{\infty}\frac{(-1)^s(\mathcal{V}+1)(\mathcal{V}+2)}{\Gamma_b{(\mathcal{V}+s+3)}s!}\left(\frac{Z}{2}\right)^{\mathcal{V}+2s+2}+8\sum_{s=0}^{\infty}\frac{(-1)^s(\mathcal{V}+2)}{\Gamma_b{(\mathcal{V}+s+3)}(s-1)!}\left(\frac{Z}{2}\right)^{\mathcal{V}+2s+2}\\&+4\sum_{s=0}^{\infty}\frac{(-1)^s}{\Gamma_b{(\mathcal{V}+s+3)}(s-2)!}\left(\frac{Z}{2}\right)^{\mathcal{V}+2s+2}\\
           &=4\mathcal{J}_{\mathcal{V}+2}(Z)+8\sum_{s=1}^{\infty}\frac{(-1)^s(\mathcal{V}+2)}{\Gamma_b{(\mathcal{V}+s+3)}(s-1)!}\left(\frac{Z}{2}\right)^{\mathcal{V}+2s+2}+4\sum_{s=2}^{\infty}\frac{(-1)^s}{\Gamma_b{(\mathcal{V}+s+3)}(s-2)!}\left(\frac{Z}{2}\right)^{\mathcal{V}+2s+2}\\
           &=4\mathcal{J}_{\mathcal{V}+2}(Z)+8\sum_{s=0}^{\infty}\frac{(-1)^{s+1}(\mathcal{V}+2)}{\Gamma_b{(\mathcal{V}+s+4)}s!}\left(\frac{Z}{2}\right)^{\mathcal{V}+2s+4}+4\sum_{s=0}^{\infty}\frac{(-1)^{s+2}}{\Gamma_b{(\mathcal{V}+s+5)}s!}\left(\frac{Z}{2}\right)^{\mathcal{V}+2s+6}\\
           &=4\mathcal{J}_{\mathcal{V}+2}(Z)-4Z(\mathcal{V}+2)\mathcal{J}_{\mathcal{V}+3}(Z)+Z^2\mathcal{J}_{\mathcal{V}+4}(Z).
        \end{align*}
        (iii) Since, $M=me_1+ne_2\in\mathbb{BC}$ where $m,n\in\mathbb{Z}^+$, then $\overline{M}=ne_1+me_2$. Using \eqref{eq:gh} and \eqref{eq:g}, we obtain
        \begin{align}\label{eq:6k}
            \mathcal{J}_{\mathcal{V}+M}(Z)&=\sum_{s=0}^{\infty}\frac{(-1)^s}{\Gamma_b{(\mathcal{V}+M+s+1)}s!}\left(\frac{Z}{2}\right)^{\mathcal{V}+2s}\nonumber\\
            &=\sum_{s=0}^{\infty}\frac{(-1)^s}{\Gamma{(\nu_1+m+s+1)}s!}\left(\frac{z_1}{2}\right)^{\nu_1+2s}e_1+\sum_{s=0}^{\infty}\frac{(-1)^s}{\Gamma{(\nu_2+n+s+1)}s!}\left(\frac{z_2}{2}\right)^{\nu_2+2s}e_2.
        \end{align}
        Similarly, we get
        \begin{align}\label{eq:2t}
            \mathcal{J}_{\mathcal{V}+\overline{M}}(Z)=\sum_{s=0}^{\infty}\frac{(-1)^s}{\Gamma{(\nu_1+n+s+1)}s!}\left(\frac{z_1}{2}\right)^{\nu_1+2s}e_1+\sum_{s=0}^{\infty}\frac{(-1)^s}{\Gamma{(\nu_2+m+s+1)}s!}\left(\frac{z_2}{2}\right)^{\nu_2+2s}e_2.
        \end{align}
        Combining \eqref{eq:6k} and \eqref{eq:2t}, we have
        \begin{align*}
            &\mathcal{J}_{\mathcal{V}+M}(Z)+\mathcal{J}_{\mathcal{V}+\overline{M}}(Z)\\&=\left[\sum_{s=0}^{\infty}\frac{(-1)^s}{\Gamma{(\nu_1+m+s+1)}s!}\left(\frac{z_1}{2}\right)^{\nu_1+2s}e_1+\sum_{r=0}^{\infty}\frac{(-1)^s}{\Gamma{(\nu_2+m+s+1)}s!}\left(\frac{z_2}{2}\right)^{\nu_2+2s}e_2\right]\\
            &+\left[\sum_{s=0}^{\infty}\frac{(-1)^s}{\Gamma{(\nu_1+n+s+1)}s!}\left(\frac{z_1}{2}\right)^{\nu_1+2s}e_1+\sum_{s=0}^{\infty}\frac{(-1)^s}{\Gamma{(\nu_2+n+s+1)}s!}\left(\frac{z_2}{2}\right)^{\nu_2+2s}e_2\right]\\
            &=\mathcal{J}_{\mathcal{V}+m}(Z)+\mathcal{J}_{\mathcal{V}+n}(Z).
        \end{align*}
        Hence the proof is completed.
 \end{proof}
     \section{Integral representations of the bicomplex Bessel function}
 This section focuses on deriving various forms of integral representations for the bicomplex Bessel function.
 \begin{theo}\label{th:ad}
    Let $Z=z_1e_1+z_2e_2, \mathcal{V}=\nu_1e_1+\nu_2e_2=\alpha_1 +j\alpha_2 \in{\mathbb{BC}}$, where $e_1=\frac{1+k}{2}$  and  $e_2=\frac{1-k}{2}$ with satisfies the condition $\Re(\alpha_1)>|\Im(\alpha_2)|$, then bicomplex Bessel function is represented integrally by
    \begin{align}\label{eq:a0}
        \mathcal{J}_\mathcal{V}(Z)=\frac{1}{\Gamma_b(\mathcal{V})}\left[\sum_{s=0}^{\infty}\frac{(-1)^s}{(s!)^2}\left(\frac{Z}{2}\right)^{\mathcal{V}+2s}\int_{D}t^{s}(1-t)^{\mathcal{V}-1}dt\right],
    \end{align}
    where $D=(D_1(t_1),D_2(t_2))$ be the curve in $ \mathbb{BC}$ and $t=t_1e_1+t_2e_2\in\mathbb{BC}$ with $0\leq t_1,t_2\leq 1$.
 \end{theo}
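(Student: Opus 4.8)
The plan is to start from the right-hand side of \eqref{eq:a0} and collapse everything to its idempotent components, where the bicomplex integral factors into a pair of classical Euler Beta integrals. First I would apply the definition of bicomplex integration together with the idempotent decompositions $\mathcal{V}=\nu_1e_1+\nu_2e_2$ and $t=t_1e_1+t_2e_2$ (and the fact that $e_1e_2=0$) to write
$$\int_{D}t^{s}(1-t)^{\mathcal{V}-1}dt=\left[\int_{D_1}t_1^{s}(1-t_1)^{\nu_1-1}dt_1\right]e_1+\left[\int_{D_2}t_2^{s}(1-t_2)^{\nu_2-1}dt_2\right]e_2,$$
where each $(1-t)^{\mathcal{V}-1}$ splits componentwise as $(1-t_1)^{\nu_1-1}e_1+(1-t_2)^{\nu_2-1}e_2$. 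Choosing $D_1,D_2$ to be the real segment $[0,1]$, each component becomes a standard Beta integral $B(s+1,\nu_j)$.

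The central step is to evaluate each component through the classical identity $\int_0^1 t^{s}(1-t)^{\nu-1}dt=B(s+1,\nu)=\frac{\Gamma(s+1)\Gamma(\nu)}{\Gamma(s+\nu+1)}=\frac{s!\,\Gamma(\nu)}{\Gamma(\nu+s+1)}$, valid whenever $\Re(\nu)>0$. Reassembling the two pieces and invoking the idempotent form \eqref{eq:g} of the bicomplex gamma function, the integral becomes the bicomplex Beta function
$$\int_{D}t^{s}(1-t)^{\mathcal{V}-1}dt=\frac{s!\,\Gamma_b(\mathcal{V})}{\Gamma_b(\mathcal{V}+s+1)}.$$
Substituting this back into the right-hand side of \eqref{eq:a0}, the factor $\Gamma_b(\mathcal{V})$ cancels and one factor of $s!$ is absorbed, leaving precisely $\sum_{s=0}^{\infty}\frac{(-1)^s}{\Gamma_b(\mathcal{V}+s+1)s!}\left(\frac{Z}{2}\right)^{\mathcal{V}+2s}$, which is the defining series \eqref{eq:gh} of $\mathcal{J}_\mathcal{V}(Z)$.

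I expect the main obstacle to be verifying that the stated hypothesis $\Re(\alpha_1)>|\Im(\alpha_2)|$ is exactly what guarantees convergence of both component Beta integrals. Writing $\mathcal{V}=\alpha_1+j\alpha_2$ in idempotent coordinates gives $\nu_1=\alpha_1-i\alpha_2$ and $\nu_2=\alpha_1+i\alpha_2$, so that $\Re(\nu_1)=\Re(\alpha_1)+\Im(\alpha_2)$ and $\Re(\nu_2)=\Re(\alpha_1)-\Im(\alpha_2)$. Since the integral $\int_0^1 t^{s}(1-t)^{\nu_j-1}dt$ converges iff $\Re(\nu_j)>0$, the two requirements $\Re(\alpha_1)\pm\Im(\alpha_2)>0$ together are equivalent to the single condition $\Re(\alpha_1)>|\Im(\alpha_2)|$. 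Once this equivalence is established, the remaining manipulation is routine: Theorem~\ref{eq:s3} already secures absolute hyperbolic convergence of the series, so the evaluation proceeds term by term and no delicate interchange of summation and integration is needed.
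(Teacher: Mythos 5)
Your proposal is correct and follows essentially the same route as the paper's proof: both rest on the idempotent splitting of the bicomplex integral into two classical Euler Beta integrals $\int_0^1 t^s(1-t)^{\nu_j-1}\,dt=\frac{s!\,\Gamma(\nu_j)}{\Gamma(\nu_j+s+1)}$, the only difference being that you run the identity from the integral side back to the series \eqref{eq:gh}, while the paper inserts the Beta factor into the component series of \eqref{eq:fa} and then replaces it by its integral. Your explicit check that $\Re(\nu_{1,2})=\Re(\alpha_1)\pm\Im(\alpha_2)>0$ is precisely the stated hypothesis is a welcome detail that the paper leaves implicit.
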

 \begin{proof}
    Based on the idempotent form \eqref{eq:fa} of the bicomplex Bessel function, we derive the following
    \begin{align*}
   &\mathcal{J}_\mathcal{V}(Z)\\&=J_{\nu_1}(z_1)e_1+J_{\nu_2}(z_2)e_2\\&=\sum_{s=0}^{\infty}\frac{(-1)^s}{\Gamma{(\nu_1+s+1)s!}}\left(\frac{z_1}{2}\right)^{\nu_1+2s}e_1+\sum_{s=0}^{\infty}\frac{(-1)^s}{\Gamma{(\nu_2+s+1)s!}}\left(\frac{z_2}{2}\right)^{\nu_2+2s}e_2\nonumber\\
    &=\left[\sum_{s=0}^{\infty}\frac{\beta(\nu_1,s+1)(-1)^s}{\Gamma(\nu_1)\Gamma(s+1)s!}\left(\frac{z_1}{2}\right)^{\nu_1+2s}\right]e_1+\left[\sum_{s=0}^{\infty}\frac{\beta(\nu_2,s+1)(-1)^s}{\Gamma(\nu_2)\Gamma(s+1)s!}\left(\frac{z_2}{2}\right)^{\nu_2+2s}\right]e_2\\&=\left[\sum_{s=0}^{\infty}\frac{(-1)^s}{\Gamma(\nu_1)\Gamma(s+1)s!}\left(\frac{z_1}{2}\right)^{\nu_1+2s}\int_{0}^{1}t_1^{s}(1-t_1)^{\nu_1-1}dt_1\right]e_1\\&\quad+\left[\sum_{s=0}^{\infty}\frac{(-1)^s}{\Gamma(\nu_2)\Gamma(s+1)s!}\left(\frac{z_2}{2}\right)^{\nu_2+2s}\int_{0}^{1}t_2^{s}(1-t_2)^{\nu_2-1}dt_2\right]e_2\\
   &=\frac{1}{\Gamma(\nu_1)}\left[\sum_{s=0}^{\infty}\frac{(-1)^s}{(s!)^2}\left(\frac{z_1}{2}\right)^{\nu_1+2s}\int_{D_1}t_1^{s}(1-t_1)^{\nu_1-1}dt_1\right]e_1\\&\quad+\frac{1}{\Gamma(\nu_1)}\left[\sum_{s=0}^{\infty}\frac{(-1)^s}{(s!)^2}\left(\frac{z_2}{2}\right)^{\nu_2+2s}\int_{D_2}t_1^{s}(1-t_2)^{\nu_2-1}dt_2\right]e_2\\
   &=\frac{1}{\Gamma_b(\mathcal{V})}\left[\sum_{s=0}^{\infty}\frac{(-1)^sZ^{\mathcal{V}+2s}}{4^s(s!)^2}\int_{D}t^{s}(1-t)^{\mathcal{V}-1}dt\right].
\end{align*}
Thus, the proof is finalized.
 \end{proof}
\begin{theo}
    Assume that $Z=z_1e_1+z_2e_2, \mathcal{V}=\nu_1e_1+\nu_2e_2=\alpha_1 +j\alpha_2 \in{\mathbb{BC}}$, where $e_1=\frac{1+k}{2}$  and  $e_2=\frac{1-k}{2}$ with satisfies $\Re(\alpha_1)+\frac{1}{2}>|\Im(\alpha_2)|$, then bicomplex Bessel function can be written as
    \begin{align*}
        \mathcal{J}_\mathcal{V}(Z)=\frac{Z^{\mathcal{V}}}{2^{\mathcal{V}-1}\sqrt{\pi}\Gamma_b\left(\mathcal{V}+\frac{1}{2}\right)}\sum_{s=0}^{\infty}\frac{(-1)^s}{\Gamma(2s+1)}\int_{D}(\cos t)^{2\mathcal{V}}(Z\sin t)^{2s}dt,
    \end{align*}
    where $D=(D_1(t_1),D_2(t_2))$ be the curve in $ \mathbb{BC}$ and $t=t_1e_1+t_2e_2\in\mathbb{BC}$ with $0\leq t_1,t_2\leq \frac{\pi}{2}$.
 \end{theo}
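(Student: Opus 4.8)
The plan is to reduce the bicomplex identity to a pair of classical complex Poisson-type integral formulas via the idempotent decomposition, and then recombine. By the idempotent form $\mathcal{J}_{\mathcal{V}}(Z)=J_{\nu_1}(z_1)e_1+J_{\nu_2}(z_2)e_2$ of Theorem~\ref{eq:s3}, together with the splittings $\Gamma_b(\mathcal{V}+\tfrac12)=\Gamma(\nu_1+\tfrac12)e_1+\Gamma(\nu_2+\tfrac12)e_2$, $Z^{\mathcal{V}}=z_1^{\nu_1}e_1+z_2^{\nu_2}e_2$, $(\cos t)^{2\mathcal{V}}=(\cos t_1)^{2\nu_1}e_1+(\cos t_2)^{2\nu_2}e_2$ and $(Z\sin t)^{2s}=(z_1\sin t_1)^{2s}e_1+(z_2\sin t_2)^{2s}e_2$, it suffices to prove, for $j\in\{1,2\}$, the scalar identity
\begin{align*}
    J_{\nu_j}(z_j)=\frac{z_j^{\nu_j}}{2^{\nu_j-1}\sqrt{\pi}\,\Gamma\!\left(\nu_j+\tfrac12\right)}\sum_{s=0}^{\infty}\frac{(-1)^s}{\Gamma(2s+1)}\int_{D_j}(\cos t_j)^{2\nu_j}(z_j\sin t_j)^{2s}\,dt_j,
\end{align*}
and then to sum the two components against $e_1,e_2$ using $e_1^2=e_1$, $e_2^2=e_2$, $e_1e_2=0$.

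For the scalar identity I would mimic the argument of Theorem~\ref{th:ad}, replacing the algebraic Beta integral used there by a trigonometric one. Starting from the defining series of $J_{\nu_j}$, I would rewrite the reciprocal gamma factor as $1/\Gamma(\nu_j+s+1)=\beta(s+\tfrac12,\nu_j+\tfrac12)/[\Gamma(s+\tfrac12)\Gamma(\nu_j+\tfrac12)]$ and insert the representation $\beta(s+\tfrac12,\nu_j+\tfrac12)=2\int_{0}^{\pi/2}(\sin t_j)^{2s}(\cos t_j)^{2\nu_j}\,dt_j$. As in Theorem~\ref{th:ad}, the summation index stays outside the integral, so no interchange of sum and integral is required. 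The rest is purely algebraic: the Legendre duplication formula in the shape $\Gamma(2s+1)=2^{2s}\pi^{-1/2}\Gamma(s+\tfrac12)\,s!$ converts $1/[s!\,\Gamma(s+\tfrac12)]$ into $2^{2s}/[\sqrt{\pi}\,\Gamma(2s+1)]$, and after cancelling the factors $2^{2s}$ against $(z_j/2)^{\nu_j+2s}$ and collecting constants one lands exactly on the prefactor $z_j^{\nu_j}/[2^{\nu_j-1}\sqrt{\pi}\,\Gamma(\nu_j+\tfrac12)]$.

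The step I expect to be the main obstacle is checking that the stated hypothesis is precisely what guarantees convergence of each trigonometric Beta integral in its idempotent component. Parametrizing $D_j$ by $t_j\in[0,\pi/2]$, the integrand behaves like $(\cos t_j)^{2\nu_j}$ near $t_j=\pi/2$ and like $(\sin t_j)^{2s}$ near $t_j=0$; the latter is harmless for $s\ge 0$, while the former is integrable exactly when $\Re(2\nu_j)>-1$, i.e. $\Re(\nu_j)>-\tfrac12$, which also keeps $\Gamma(\nu_j+\tfrac12)$ finite and nonzero. Since $\nu_1=\alpha_1-i\alpha_2$ and $\nu_2=\alpha_1+i\alpha_2$, one computes $\Re(\nu_1)=\Re(\alpha_1)+\Im(\alpha_2)$ and $\Re(\nu_2)=\Re(\alpha_1)-\Im(\alpha_2)$, so $\min\{\Re(\nu_1),\Re(\nu_2)\}=\Re(\alpha_1)-|\Im(\alpha_2)|$; the hypothesis $\Re(\alpha_1)+\tfrac12>|\Im(\alpha_2)|$ is therefore equivalent to $\Re(\nu_j)>-\tfrac12$ for both $j$, which is exactly the convergence requirement.

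Finally I would carry out the bicomplex recombination. Because every operation involved—the bicomplex trigonometric functions, the complex powers, and the contour integral over $D=(D_1,D_2)$—splits through the idempotents $e_1,e_2$, multiplying the two scalar formulas by $e_1$ and $e_2$ and adding reproduces the single bicomplex expression with prefactor $Z^{\mathcal{V}}/[2^{\mathcal{V}-1}\sqrt{\pi}\,\Gamma_b(\mathcal{V}+\tfrac12)]$, completing the proof. The genuine content is the convergence bookkeeping just described; the duplication-formula algebra and the idempotent recombination are routine.
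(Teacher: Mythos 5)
Your proposal is correct and follows essentially the same route as the paper: idempotent decomposition, the trigonometric Beta integral $B(s+\tfrac12,\nu_j+\tfrac12)=2\int_0^{\pi/2}(\sin t)^{2s}(\cos t)^{2\nu_j}\,dt$, Legendre duplication to trade $s!\,\Gamma(s+\tfrac12)$ for $\sqrt{\pi}\,\Gamma(2s+1)2^{-2s}$, and recombination via $e_1,e_2$. Your explicit verification that the hypothesis $\Re(\alpha_1)+\tfrac12>|\Im(\alpha_2)|$ is equivalent to $\Re(\nu_j)>-\tfrac12$ for both components is a small but welcome addition that the paper leaves implicit.
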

 \begin{proof}
     Applying the duplication formula for the gamma function \cite{sp_function}
     \begin{align*}
         \sqrt{\pi}\Gamma(2k+1)=2^{2k}\Gamma\left(k+\frac{1}{2}\right)\Gamma(k+1)
     \end{align*}
     in \eqref{eq:fa}, we obtain
      \begin{align}
          &\mathcal{J}_\mathcal{V}(Z)\\&= \sum_{s=0}^{\infty}\frac{(-1)^s}{\Gamma{(\nu_1+s+1)s!}}\left(\frac{z_1}{2}\right)^{\nu_1+2s}e_1+\sum_{s=0}^{\infty}\frac{(-1)^s}{\Gamma{(\nu_2+s+1)s!}}\left(\frac{z_2}{2}\right)^{\nu_2+2s}e_2\nonumber\\
          &=\sum_{s=0}^{\infty}\frac{(-1)^s\Gamma\left(s+\frac{1}{2}\right)}{\Gamma(\nu_1+s+1)s!\Gamma\left(s+\frac{1}{2}\right)}\left(\frac{z_1}{2}\right)^{\nu_1+2s}e_1+\sum_{s=0}^{\infty}\frac{(-1)^s\Gamma\left(s+\frac{1}{2}\right)}{\Gamma{(\nu_2+s+1)s!\Gamma\left(s+\frac{1}{2}\right)}}\left(\frac{z_2}{2}\right)^{\nu_2+2s}e_2\nonumber\\
          &=\sum_{s=0}^{\infty}\frac{(-1)^s2^{2s}\Gamma\left(s+\frac{1}{2}\right)}{\Gamma(\nu_1+s+1)\sqrt{\pi}\Gamma(2s+1)}\left(\frac{z_1}{2}\right)^{\nu_1+2s}e_1+\sum_{s=0}^{\infty}\frac{(-1)^s2^{2s}\Gamma\left(s+\frac{1}{2}\right)}{\Gamma(\nu_2+s+1)\sqrt{\pi}\Gamma(2s+1)}\left(\frac{z_2}{2}\right)^{\nu_2+2s}e_2\nonumber\\
          &=\frac{1}{2^{\nu_1}\sqrt{\pi}\Gamma\left(\nu_1+\frac{1}{2}\right)}\sum_{s=0}^{\infty}\frac{(-1)^sz_1^{\nu_1+2s}}{\Gamma(2s+1)}B(\nu_1+\frac{1}{2},s+\frac{1}{2})e_1\nonumber\\
          &\;\;\;+\frac{1}{2^{\nu_2}\sqrt{\pi}\Gamma\left(\nu_2+\frac{1}{2}\right)}\sum_{s=0}^{\infty}\frac{(-1)^sz_2^{\nu_2+2s}}{\Gamma(2s+1)}B(\nu_2+\frac{1}{2},s+\frac{1}{2})e_2\nonumber\\
          &=\frac{1}{2^{\nu_1-1}\sqrt{\pi}\Gamma\left(\nu_1+\frac{1}{2}\right)}\sum_{s=0}^{\infty}\frac{(-1)^sz_1^{\nu_1+2s}}{\Gamma(2s+1)}\int_{0}^{\frac{\pi}{2}}(\cos t_1)^{2\nu_1}(\sin t_1)^{2s}dt_1e_1\nonumber\\
          &\;\;\;+\frac{1}{2^{\nu_2-1}\sqrt{\pi}\Gamma\left(\nu_2+\frac{1}{2}\right)}\sum_{s=0}^{\infty}\frac{(-1)^sz_2^{\nu_2+2s}}{\Gamma(2s+1)}\int_{0}^{\frac{\pi}{2}}(\cos t_2)^{2\nu_2}(\sin t_2)^{2s}dt_2e_2\nonumber\\
          &=\left[\frac{z_1^{\nu_1}}{2^{\nu_1-1}\sqrt{\pi}\Gamma\left(\nu_1+\frac{1}{2}\right)}e_1+\frac{z_2^{\nu_2}}{2^{\nu_2-1}\sqrt{\pi}\Gamma\left(\nu_2+\frac{1}{2}\right)}e_2\right]\nonumber\\ &\;\;\times\sum_{s=0}^{\infty}\frac{(-1)^s}{\Gamma(2s+1)}\int_{(D_1,D_2)}[(\cos t_1)^{2\nu_1}(z_1\sin t_2)^{2s}e_1+(\cos t_2)^{2\nu_2}(z_2\sin t_2)^{2s}e_2][dt_1e_1+dt_2e_2]\nonumber\\
          &=\frac{Z^{\mathcal{V}}}{2^{\mathcal{V}-1}\sqrt{\pi}\Gamma_b\left(\mathcal{V}+\frac{1}{2}\right)}\sum_{s=0}^{\infty}\frac{(-1)^s}{\Gamma_b(2s+1)}\int_{D}(\cos t)^{2\mathcal{V}}(Z\sin t)^{2s}dt.\nonumber
      \end{align}
      Hence, the proof is completed.
 \end{proof}
 \begin{theo}\label{th:mn}
     Assume that $Z=z_1e_1+z_2e_2, \mathcal{V}=\nu_1e_1+\nu_2e_2=\alpha_1 +j\alpha_2, \delta=\delta_1e_1+\delta_2e_2=\beta_1+j\beta_2, C=c_1e_1+c_2e_2 \in{\mathbb{BC}}$, where $e_1=\frac{1+k}{2}, e_2=\frac{1-k}{2}$ and satisfies the conditions $\Re(\alpha_1)>|\Im(\alpha_2)|,\Re(\beta_1)+1>|\Im(\beta_2)|$, the integral representation of bicomplex Bessel function is given by
     \begin{align*}
         \mathcal{J}_{\mathcal{V}+\delta}(Z)=\frac{1}{\Gamma(\mathcal{V})\Gamma(\delta)}\left(\frac{Z}{2}\right)^{\mathcal{V}+\delta}\sum_{s=0}^{\infty}\frac{Z^{2s}}{4^s(s!)^2}\int_A\int_Du^{\mathcal{V}-1}(1-u)^{\delta+s}v^{\delta-1}(v-1)^sdudv,
     \end{align*}
     where $A=(A_1(u_1),A_2(u_2))$ and $D=(D_1(v_1),D_2(v_2))$ are two curves in $\mathbb{BC}$ with $0\leq u_1,u_2,v_1,v_2\leq1$.
 \end{theo}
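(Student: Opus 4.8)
The plan is to reduce the bicomplex identity to a pair of scalar (complex) statements via the idempotent representation, prove each scalar statement by rewriting the reciprocal gamma coefficient of the classical Bessel series as a product of two Euler--Beta integrals, and then reassemble the two components into the bicomplex double integral. First I would apply Theorem~\ref{eq:s3}. Since $\mathcal{V}+\delta=(\nu_1+\delta_1)e_1+(\nu_2+\delta_2)e_2$, the idempotent form gives
\begin{align*}
\mathcal{J}_{\mathcal{V}+\delta}(Z)=J_{\nu_1+\delta_1}(z_1)e_1+J_{\nu_2+\delta_2}(z_2)e_2,
\end{align*}
so it suffices to prove, for $j=1,2$,
\begin{align*}
J_{\nu_j+\delta_j}(z_j)=\frac{1}{\Gamma(\nu_j)\Gamma(\delta_j)}\left(\frac{z_j}{2}\right)^{\nu_j+\delta_j}\sum_{s=0}^{\infty}\frac{z_j^{2s}}{4^s(s!)^2}\int_0^1\!\!\int_0^1 u^{\nu_j-1}(1-u)^{\delta_j+s}v^{\delta_j-1}(v-1)^s\,du\,dv.
\end{align*}

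For each $j$, I would start from the defining series \eqref{eq:5} and split its coefficient by the Beta--Gamma relation $B(a,b)=\Gamma(a)\Gamma(b)/\Gamma(a+b)$ applied twice, namely
\begin{align*}
\frac{1}{\Gamma(\nu_j+\delta_j+s+1)}=\frac{1}{\Gamma(\nu_j)\Gamma(\delta_j)\,s!}\,B(\nu_j,\delta_j+s+1)\,B(\delta_j,s+1),
\end{align*}
where I have used $\Gamma(s+1)=s!$. Writing the two Beta functions as Euler integrals and using $(v-1)^s=(-1)^s(1-v)^s$ turns $B(\delta_j,s+1)$ into $(-1)^s\int_0^1 v^{\delta_j-1}(v-1)^s\,dv$; this sign cancels the factor $(-1)^s$ carried by the Bessel series, which is exactly why no alternating sign survives on the right-hand side. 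After factoring $\left(z_j/2\right)^{\nu_j+\delta_j}/(\Gamma(\nu_j)\Gamma(\delta_j))$ out of the sum and noting $(z_j/2)^{2s}=z_j^{2s}/4^s$, the scalar identity follows. The hypotheses $\Re(\alpha_1)>|\Im(\alpha_2)|$ and $\Re(\beta_1)+1>|\Im(\beta_2)|$ translate, through $\nu_j,\delta_j$ being the idempotent components of $\mathcal{V},\delta$, into positivity of the real parts governing the two Euler--Beta integrals, which is precisely what guarantees their convergence.

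Finally I would reassemble the two components: using $\Gamma_b(\mathcal{V})=\Gamma(\nu_1)e_1+\Gamma(\nu_2)e_2$, $\Gamma_b(\delta)=\Gamma(\delta_1)e_1+\Gamma(\delta_2)e_2$, the idempotent rules $e_1^2=e_1$, $e_2^2=e_2$, $e_1e_2=0$, and the definition of the bicomplex integral over $A=(A_1,A_2)$ and $D=(D_1,D_2)$, the two scalar double integrals combine into $\int_A\int_D u^{\mathcal{V}-1}(1-u)^{\delta+s}v^{\delta-1}(v-1)^s\,du\,dv$, which yields the claimed bicomplex formula. I expect the main obstacle to be the rigorous justification of interchanging the infinite summation with the double integral in each component; this rests on the absolute hyperbolic convergence established in Theorem~\ref{eq:s3} together with the uniform boundedness of the integrands on the compact square under the stated convergence conditions, which licenses termwise integration. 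The remaining work—tracking the $(-1)^s$ cancellation and performing the idempotent recombination—is routine bookkeeping.
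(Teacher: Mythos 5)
Your proposal is correct and follows essentially the same route as the paper: idempotent decomposition into two complex Bessel components, the double Beta-function factorization $\tfrac{1}{\Gamma(\nu+\delta+s+1)}=\tfrac{1}{\Gamma(\nu)\Gamma(\delta)s!}B(\nu,\delta+s+1)B(\delta,s+1)$, Euler integral representations with $(v-1)^s=(-1)^s(1-v)^s$ absorbing the alternating sign, and recombination via the bicomplex integral over $(A_1,A_2)$ and $(D_1,D_2)$. Your additional remark on justifying the sum--integral interchange is a point the paper glosses over, but otherwise the two arguments coincide.
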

 \begin{proof}
   Using \eqref{eq:gh}, we obtain
   \begin{align}\label{eq:zm}
        &\mathcal{J}_{\mathcal{V}+\delta}(Z)\nonumber\\&=\sum_{s=0}^{\infty}\frac{(-1)^s}{\Gamma_b{(\mathcal{V}+\delta+s+1)}s!}\left(\frac{Z}{2}\right)^{\mathcal{V}+\delta+2s}\nonumber\\
        &=\sum_{s=0}^{\infty}\frac{(-1)^s}{\Gamma{(\nu_1+\delta_1+s+1)}s!}\left(\frac{z_1}{2}\right)^{\nu_1+\delta_1+2s}e_1+\sum_{s=0}^{\infty}\frac{(-1)^s}{\Gamma{(\nu_2+\delta_2+s+1)}s!}\left(\frac{z_2}{2}\right)^{\nu_2+\delta_2+2s}e_2\nonumber\\
        &=\frac{1}{\Gamma(\nu_1)\Gamma(\delta_1)}\sum_{s=0}^{\infty}\frac{(-1)^s\Gamma(\nu_1)\Gamma(\delta_1+s+1)}{\Gamma{(\nu_1+\delta_1+s+1)}}\frac{\Gamma(\delta_1)\Gamma(s+1)}{\Gamma(\delta_1+s+1)(s!)^2}\left(\frac{z_1}{2}\right)^{\nu_1+\delta_1+2s}e_1\nonumber\\
        &\quad+\frac{1}{\Gamma(\nu_2)\Gamma(\delta_2)}\sum_{s=0}^{\infty}\frac{(-1)^s\Gamma(\nu_2)\Gamma(\delta_2+s+1)}{\Gamma{(\nu_2+\delta_2+s+1)}}\frac{\Gamma(\delta_2)\Gamma(s+1)}{\Gamma(\delta_2+s+1)(s!)^2}\left(\frac{z_2}{2}\right)^{\nu_2+\delta_2+2s}e_2\nonumber\\
        &=\frac{1}{\Gamma(\nu_1)\Gamma(\delta_1)}\sum_{s=0}^{\infty}\frac{(-1)^s}{(s!)^2}B(\nu_1,\delta_1+s+1)B(\delta_1,s+1)\left(\frac{z_1}{2}\right)^{\nu_1+\delta_1+2s}e_1\nonumber\\
        &\quad+\frac{1}{\Gamma(\nu_2)\Gamma(\delta_2)}\sum_{s=0}^{\infty}\frac{(-1)^s}{(s!)^2}B(\nu_2,\delta_2+s+1)B(\delta_2,s+1)\left(\frac{z_2}{2}\right)^{\nu_2+\delta_2+2s}e_2\nonumber\\
       & =\frac{1}{\Gamma(\nu_1)\Gamma(\delta_1)}\sum_{s=0}^{\infty}\frac{1}{(s!)^2}\int_0^1u_1^{\nu_1-1}(1-u_1)^{\delta_1+s}du_1\int_0^1v_1^{\delta_1-1}(v_1-1)^sdv_1\left(\frac{z_1}{2}\right)^{\nu_1+\delta_1+2s}e_1\nonumber\\
       &\quad+\frac{1}{\Gamma(\nu_2)\Gamma(\delta_2)}\sum_{s=0}^{\infty}\frac{1}{(s!)^2}\int_0^1u_2^{\nu_2-1}(1-u_2)^{\delta_2+s}du_2\int_0^1v_2^{\delta_2-1}(v_2-1)^sdv_2\left(\frac{z_2}{2}\right)^{\nu_2+\delta_2+2s}e_2.
   \end{align}
Let us consider two curves $A$ and $D$ in $\mathbb{BC}$, whose parametric representation are $A=(A_1(u_1),A_2(u_2))$ and $D=(D_1(v_1),D_2(v_2))$ respectively and $u=u_1e_1+u_2e_2, v=v_1e_1+v_2e_2\in{\mathbb{BC}}$ with satisfies the condition $0\leq u_1,u_2,v_1,v_2\leq1$. Now from equation \eqref{eq:zm}, we obtain
\begin{align*}
    & \mathcal{J}_{\mathcal{V}+\delta}(Z)\\&= \left[\frac{1}{\Gamma(\nu_1)\Gamma(\delta_1)}\left(\frac{z_1}{2}\right)^{\nu_1+\delta_1}e_1+\frac{1}{\Gamma(\nu_2)\Gamma(\delta_2)}\left(\frac{z_2}{2}\right)^{\nu_2+\delta_2}e_2\right]\\&\quad\times\sum_{s=0}^{\infty}\frac{1}{(s!)^2}\int_A(u_1e_1+u_2e_2)^{\nu_1e_1+\nu_2e_2-1}(1-u_1e_1-u_2e_2)^{\delta_1e_1+\delta_2e_2+s}d(u_1e_1+u_2e_2)\\
 &\hspace{40pt}\int_D(v_1e_1+v_2e_2)^{\delta_1e_1+\delta_2e_2-1}(v_1e_1+v_2e_2-1)^sd(v_1e_1+v_2e_2)\left[\left(\frac{z_1}{2}\right)^{2s}e_1+\left(\frac{z_2}{2}\right)^{2s}e_2\right]\\
     &=\frac{1}{\Gamma(\mathcal{V})\Gamma(\delta)}\left(\frac{Z}{2}\right)^{\mathcal{V}+\delta}\sum_{s=0}^{\infty}\frac{Z^{2s}}{4^s(s!)^2}\int_A\int_Du^{\mathcal{V}-1}(1-u)^{\delta+s}v^{\delta-1}(v-1)^sdudv,
\end{align*}
 which completes the proof of the theorem.
 \end{proof}

  \begin{theo}\label{th:w9}
      Suppose that  $Z=z_1e_1+z_2e_2, \mathcal{V}=\nu_1e_1+\nu_2e_2=\alpha_1 +j\alpha_2 \in{\mathbb{BC}}$, where $e_1=\frac{1+k}{2}$ and $ e_2=\frac{1-k}{2}$  with satisfies the condition $\Re(\alpha_1)+\frac{1}{2}>|\Im(\alpha_2)|$, then  bicomplex Bessel function can be expresses as
     \begin{align*}
         \mathcal{J}_{\mathcal{V}}(Z)=\frac{2^{\mathcal{V}}}{\sqrt{\pi}}\sum_{s=0}^{\infty}\frac{(-1)^sZ^{\mathcal{V}+2s}}{s!\Gamma_b(2\mathcal{V}+2s+1)}\int_{\gamma}e^{-t}t^{\mathcal{V}+s-\frac{1}{2}}dt,
     \end{align*}
    where $t=t_1e_1+t_2e_2\in\mathbb{BC}$ and $\gamma=(\gamma_1,\gamma_2)$ be a curve in $ \mathbb{BC}$, defined by the following parametric equation $\gamma(t)=(\gamma(t_1),\gamma(t_2))$ with $0<t_1,t_2<\infty$.
 \end{theo}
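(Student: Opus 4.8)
The plan is to reduce the bicomplex identity to a pair of scalar (complex) identities via the idempotent decomposition, prove each scalar identity by the Legendre duplication formula together with the Euler integral for the gamma function, and then reassemble componentwise. Starting from \eqref{eq:fa} we have $\mathcal{J}_{\mathcal{V}}(Z)=J_{\nu_1}(z_1)e_1+J_{\nu_2}(z_2)e_2$, and since each ingredient on the right-hand side of the claimed formula splits along $e_1,e_2$ — namely $2^{\mathcal{V}}=2^{\nu_1}e_1+2^{\nu_2}e_2$, $Z^{\mathcal{V}+2s}=z_1^{\nu_1+2s}e_1+z_2^{\nu_2+2s}e_2$, $\Gamma_b(2\mathcal{V}+2s+1)=\Gamma(2\nu_1+2s+1)e_1+\Gamma(2\nu_2+2s+1)e_2$ by \eqref{eq:g}, and the bicomplex integral is defined componentwise — it suffices to establish for $(\nu,z)\in\{(\nu_1,z_1),(\nu_2,z_2)\}$ the scalar relation $J_{\nu}(z)=\tfrac{2^{\nu}}{\sqrt{\pi}}\sum_{s=0}^{\infty}\tfrac{(-1)^s z^{\nu+2s}}{s!\,\Gamma(2\nu+2s+1)}\int_{0}^{\infty}e^{-t}t^{\nu+s-\frac12}\,dt$ and then combine using $e_1^2=e_1$, $e_2^2=e_2$, $e_1e_2=0$.

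For the scalar step I would begin from the defining series $J_{\nu}(z)=\sum_{s=0}^{\infty}\tfrac{(-1)^s}{s!\,\Gamma(\nu+s+1)}(z/2)^{\nu+2s}$ and apply the duplication formula $\sqrt{\pi}\,\Gamma(2k+1)=2^{2k}\Gamma\!\left(k+\tfrac12\right)\Gamma(k+1)$ with $k=\nu+s$, which gives $\Gamma(\nu+s+1)=\sqrt{\pi}\,\Gamma(2\nu+2s+1)\big/\bigl[2^{2\nu+2s}\Gamma(\nu+s+\tfrac12)\bigr]$. Substituting this into the series and collecting the powers of $2$ converts $\tfrac{1}{\Gamma(\nu+s+1)2^{\nu+2s}}$ into $\tfrac{2^{\nu}\Gamma(\nu+s+\frac12)}{\sqrt{\pi}\,\Gamma(2\nu+2s+1)}$, producing exactly $J_{\nu}(z)=\tfrac{2^{\nu}}{\sqrt{\pi}}\sum_{s=0}^{\infty}\tfrac{(-1)^s z^{\nu+2s}}{s!\,\Gamma(2\nu+2s+1)}\Gamma\!\left(\nu+s+\tfrac12\right)$. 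Finally I would replace $\Gamma(\nu+s+\tfrac12)=\int_{0}^{\infty}e^{-t}t^{\nu+s-\frac12}\,dt$, identifying this integral with $\int_{\gamma_j}e^{-t_j}t_j^{\nu_j+s-\frac12}\,dt_j$ by taking each projection contour $\gamma_j$ along the positive real axis.

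Reassembling the two projections through the idempotent multiplication, together with the componentwise definition of $\int_{\gamma}$, then yields $\mathcal{J}_{\mathcal{V}}(Z)=\tfrac{2^{\mathcal{V}}}{\sqrt{\pi}}\sum_{s=0}^{\infty}\tfrac{(-1)^s Z^{\mathcal{V}+2s}}{s!\,\Gamma_b(2\mathcal{V}+2s+1)}\int_{\gamma}e^{-t}t^{\mathcal{V}+s-\frac12}\,dt$, which is the assertion. The only delicate point — and the place where the hypothesis is used — is the convergence of the Euler integral, which requires $\Re(\nu_j+s+\tfrac12)>0$ for all $s\ge0$, i.e. $\Re(\nu_j)>-\tfrac12$ for $j=1,2$. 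Under the idempotent representation of $\mathcal{V}=\alpha_1+j\alpha_2$ one has $\nu_1=\alpha_1-i\alpha_2$ and $\nu_2=\alpha_1+i\alpha_2$, so that $\Re(\nu_1)=\Re(\alpha_1)+\Im(\alpha_2)$ and $\Re(\nu_2)=\Re(\alpha_1)-\Im(\alpha_2)$; both exceed $-\tfrac12$ exactly when $\Re(\alpha_1)+\tfrac12>|\Im(\alpha_2)|$, which is precisely the stated condition. I expect the main (though modest) obstacle to be this translation of the convergence requirement into the hypothesis, since the remaining manipulations are the duplication-formula bookkeeping already carried out in the preceding theorem.
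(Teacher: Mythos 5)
Your proposal is correct and follows essentially the same route as the paper: decompose via the idempotent representation, apply the Legendre duplication formula with $k=\nu+s$ to trade $\Gamma(\nu+s+1)$ for $\Gamma(2\nu+2s+1)/\Gamma(\nu+s+\tfrac12)$, replace $\Gamma(\nu+s+\tfrac12)$ by its Euler integral along the positive real axis, and reassemble componentwise. Your explicit verification that the hypothesis $\Re(\alpha_1)+\tfrac12>|\Im(\alpha_2)|$ is exactly the convergence condition $\Re(\nu_j)>-\tfrac12$ for both projections is a welcome detail the paper leaves implicit.
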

 \begin{proof}
 Using Legendre's duplication formula of Gamma function \cite{sp_function} and \eqref{eq:fa}, we get
\begin{align*}
   &\mathcal{J}_\mathcal{V}(Z)=\sum_{s=0}^{\infty}\frac{(-1)^s}{\Gamma{(\nu_1+s+1)s!}}\left(\frac{z_1}{2}\right)^{\nu_1+2s}e_1+\sum_{s=0}^{\infty}\frac{(-1)^s}{\Gamma{(\nu_2+s+1)s!}}\left(\frac{z_2}{2}\right)^{\nu_2+2s}e_2\\&=\sum_{s=0}^{\infty}\frac{\Gamma(\nu_1+s+\frac{1}{2})(-1)^s}{\Gamma(\nu_1+s+\frac{1}{2})\Gamma{(\nu_1+s+1)s!}}\left(\frac{z_1}{2}\right)^{\nu_1+2s}e_1+\sum_{s=0}^{\infty}\frac{\Gamma(\nu_2+s+\frac{1}{2})(-1)^s}{\Gamma(\nu_2+s+\frac{1}{2})\Gamma{(\nu_2+s+1)s!}}\left(\frac{z_2}{2}\right)^{\nu_2+2s}e_2\\&=\sum_{s=0}^{\infty}\frac{2^{2\nu_1+2s}\Gamma(\nu_1+s+\frac{1}{2})(-1)^s}{\sqrt{\pi}\Gamma(2\nu_1+2s+1)s!}\left(\frac{z_1}{2}\right)^{\nu_1+2s}e_1+\sum_{s=0}^{\infty}\frac{2^{2\nu_2+2s}\Gamma(\nu_2+s+\frac{1}{2})(-1)^s}{\sqrt{\pi}\Gamma(2\nu_2+2s+1)s!}\left(\frac{z_2}{2}\right)^{\nu_2+2s}e_2\\
   &=\frac{1}{\sqrt{\pi}}\left[\sum_{s=0}^{\infty}\frac{(-1)^s2^{\nu_1}z_1^{\nu_1+2s}}{\Gamma(2\nu_1+2s+1)s!}\int_{0}^{\infty}e^{-t_1}t_1^{\nu_1+s-\frac{1}{2}}dt_1e_1+\sum_{s=0}^{\infty}\frac{(-1)^s2^{\nu_2}z_2^{\nu_2+2s}}{\Gamma(2\nu_2+2s+1)s!}\int_{0}^{\infty}e^{-t_2}t_2^{\nu_2+s-\frac{1}{2}}dt_2e_2\right]\\
   &=\frac{1}{\sqrt{\pi}}\sum_{s=0}^{\infty}\frac{(-1)^s}{s!}\left[\frac{2^{\nu_1}z_1^{\nu_1+2s}}{\Gamma(2\nu_1+2s+1)}e_1+\frac{2^{\nu_2}z_2^{\nu_2+2s}}{\Gamma(2\nu_2+2s+1)}e_2\right]\\&\hspace{80pt}\times\left[\int_{\gamma_1}e^{-t_1}t_1^{\nu_1+s-\frac{1}{2}}dt_1e_1+\int_{\gamma_2}e^{-t_2}t_2^{\nu_2+s-\frac{1}{2}}dt_2e_2\right]\\
   &=\frac{2^{\mathcal{V}}}{\sqrt{\pi}}\sum_{s=0}^{\infty}\frac{(-1)^sZ^{\mathcal{V}+2s}}{s!\Gamma_b(2\mathcal{V}+2s+1)}\int_{\gamma}e^{-t}t^{\mathcal{V}+s-\frac{1}{2}}dt.
\end{align*}
 This concludes the proof of the theorem.
 \end{proof}
 \begin{theo}
     Suppose that $Z=z_1e_1+z_2e_2, W=w_1e_1+w_2e_2\in\mathbb{BC}$, where $e_1=\frac{1+k}{2}, e_2=\frac{1-k}{2}$ and a set of bicomplex valued function $G(Z,W)$ such that
     \begin{align}\label{eq:6r}
     G(Z,W)=\sum\limits_{n=-\infty}^{\infty}\mathcal{J}_n(Z)W^n,
     \end{align}
     then $G(Z,W)=\exp\left[\frac{1}{2}Z\left(W-\frac{1}{W}\right)\right]$.
 \end{theo}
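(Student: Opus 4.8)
The plan is to reduce the identity to the classical scalar generating function for Bessel functions by passing to the idempotent components. First I would note that since $n$ ranges over the integers, the order here is the ``diagonal'' bicomplex integer $n=ne_1+ne_2$, so the idempotent form \eqref{eq:fa} from Theorem \ref{eq:s3} applies and gives $\mathcal{J}_n(Z)=J_n(z_1)e_1+J_n(z_2)e_2$. Writing $W=w_1e_1+w_2e_2$ with $W\notin\mathcal{O}_2$ (equivalently $w_1,w_2\neq0$, so that $W$ is invertible and $1/W$ is defined), the idempotent arithmetic $e_1^2=e_1$, $e_2^2=e_2$, $e_1e_2=0$ yields $W^n=w_1^ne_1+w_2^ne_2$ for every $n\in\mathbb{Z}$ (including $n<0$) and $W^{-1}=w_1^{-1}e_1+w_2^{-1}e_2$.

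Substituting these into \eqref{eq:6r} and using orthogonality of the idempotents to annihilate the cross terms, the bilateral bicomplex series splits into two bilateral scalar series:
\begin{align*}
G(Z,W)=\left(\sum_{n=-\infty}^{\infty}J_n(z_1)w_1^n\right)e_1+\left(\sum_{n=-\infty}^{\infty}J_n(z_2)w_2^n\right)e_2.
\end{align*}
Next I would invoke the classical complex generating relation $\sum_{n=-\infty}^{\infty}J_n(z)w^n=\exp[\frac{z}{2}(w-\frac{1}{w})]$, valid for $w\in\mathbb{C}\setminus\{0\}$, applied in each channel. Finally, using the idempotent representation of the exponential $\exp(a_1e_1+a_2e_2)=e^{a_1}e_1+e^{a_2}e_2$ together with $\frac{1}{2}Z(W-W^{-1})=\frac{z_1}{2}(w_1-w_1^{-1})e_1+\frac{z_2}{2}(w_2-w_2^{-1})e_2$, I recognize the two scalar exponentials as precisely the components of $\exp[\frac{1}{2}Z(W-\frac{1}{W})]$, which completes the identity.

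The hard part, and really the only nontrivial step, is the scalar generating relation that does the combinatorial work. I would recall its proof by factoring $\exp[\frac{z}{2}(w-\frac{1}{w})]=\exp(\frac{zw}{2})\exp(-\frac{z}{2w})$, expanding each factor as a power series, forming the Cauchy product, and collecting the coefficient of $w^n$; after reindexing this coefficient is exactly $J_n(z)$ by \eqref{eq:5}, where for negative indices one uses $J_{-n}(z)=(-1)^nJ_n(z)$. The absolute convergence of the Laurent expansion for $w\neq0$ justifies the rearrangement, and transporting the whole argument through the two idempotent channels requires only that $W$ avoid the zero-divisor set $\mathcal{O}_2$, so that both projections $w_1,w_2$ are nonzero.
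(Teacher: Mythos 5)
Your proposal is correct, and its skeleton---reduce to the idempotent components via $\mathcal{J}_n(Z)=J_n(z_1)e_1+J_n(z_2)e_2$ and $W^n=w_1^ne_1+w_2^ne_2$, apply the scalar generating relation in each channel, and reassemble with $\exp(a_1e_1+a_2e_2)=e^{a_1}e_1+e^{a_2}e_2$---is the same as the paper's. The difference lies in how the scalar identity is handled. The paper re-derives it from scratch inside each component, going from the series to the exponential: it substitutes the defining double series for $\mathcal{J}_n$, interchanges the order of summation, truncates the inner sum using the poles of $\Gamma$ at non-positive integers (so that $1/\Gamma(n+s+1)=0$ for $n<-s$), reindexes with $p=n+2s$, recognizes the binomial expansion of $(w-1/w)^p$, and sums the resulting exponential series. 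You instead cite the classical generating function and sketch the opposite-direction proof, factoring $\exp[\tfrac{z}{2}(w-\tfrac1w)]=\exp(\tfrac{zw}{2})\exp(-\tfrac{z}{2w})$ and reading off Laurent coefficients via the Cauchy product. Both are standard and valid; the paper's version has the virtue of being self-contained and of making the summation interchanges explicit, while yours is shorter and cleanly isolates the only bicomplex content (the idempotent bookkeeping) from the purely classical combinatorics. A small point in your favor: you explicitly record the hypothesis $W\notin\mathcal{O}_2$, which is needed for $W^{-1}$ and hence for the statement to make sense, and which the paper leaves implicit.
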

 \begin{proof}
     First, substitute the series expansion of bicomplex Bessel function \eqref{eq:gh} in \eqref{eq:6r}, we get
     \begin{align}\label{eq:c9}
          &G(Z,W)\nonumber\\&=\sum\limits_{n=-\infty}^{\infty}\sum_{s=0}^{\infty}\frac{(-1)^s}{\Gamma_b{(n+s+1)}s!}\left(\frac{Z}{2}\right)^{n+2s}W^n\nonumber\\
          &=\sum\limits_{n=-\infty}^{\infty}\sum_{s=0}^{\infty}\frac{(-1)^s}{\Gamma{(n+s+1)}s!}\left(\frac{z_1}{2}\right)^{n+2s}w_1^ne_1+\sum\limits_{n=-\infty}^{\infty}\sum_{s=0}^{\infty}\frac{(-1)^s}{\Gamma{(n+s+1)}s!}\left(\frac{z_2}{2}\right)^{n+2s}w_2^ne_2\nonumber\\
          &=\sum_{s=0}^{\infty}\sum\limits_{n=-\infty}^{\infty}\frac{(-1)^s}{\Gamma{(n+s+1)}s!}\left(\frac{z_1}{2}\right)^{n+2s}w_1^ne_1+\sum_{s=0}^{\infty}\sum\limits_{n=-\infty}^{\infty}\frac{(-1)^s}{\Gamma{(n+s+1)}s!}\left(\frac{z_2}{2}\right)^{n+2s}w_2^ne_2.
   \end{align}
   We know $\Gamma(z)$ has a simple pole at each non positive integer, then for $n<-k$ we obtain $\frac{1}{\Gamma(n+k+1)}=0$. Therefore from the above equation \eqref{eq:c9}, we get
   \begin{align}
     &G(Z,W)\nonumber\\ &=\sum_{s=0}^{\infty}\sum\limits_{n=-2s}^{\infty}\frac{(-1)^s}{\Gamma{(n+s+1)}s!}\left(\frac{z_1}{2}\right)^{n+2s}w_1^ne_1+\sum_{s=0}^{\infty}\sum\limits_{n=-2s}^{\infty}\frac{(-1)^s}{\Gamma{(n+s+1)}s!}\left(\frac{z_2}{2}\right)^{n+2s}w_2^ne_2\nonumber\\
    &=\sum_{s=0}^{\infty}\sum\limits_{p=0}^{\infty}\frac{(-1)^s}{\Gamma{(p-s+1)}s!}\left(\frac{z_1}{2}\right)^{p}w_1^{p-2s}e_1+\sum_{s=0}^{\infty}\sum\limits_{q=0}^{\infty}\frac{(-1)^s}{\Gamma{(q-s+1)}s!}\left(\frac{z_2}{2}\right)^{q}w_2^{q-2s}e_2\nonumber
   \end{align}
   Again interchange the order of the summation and using the concept of the pole of $\Gamma(z)$, we have
   \begin{align*}
     &G(Z,W)\nonumber\\& =\sum_{p=0}^{\infty}\sum\limits_{s=0}^{p}\frac{(-1)^s}{\Gamma{(p-s+1)}s!}\left(\frac{z_1}{2}\right)^{p}w_1^{p-2s}e_1+\sum_{q=0}^{\infty}\sum\limits_{s=0}^{q}\frac{(-1)^s}{\Gamma{(q-s+1)}s!}\left(\frac{z_2}{2}\right)^{q}w_2^{q-2s}e_2\\
     &=\sum_{p=0}^{\infty}\left[\sum\limits_{s=0}^{p}\frac{w_1^{p-s}(-w_1^{-1})^s}{\Gamma{(p-s+1)}s!}\right]\left(\frac{z_1}{2}\right)^{p}e_1+\sum_{q=0}^{\infty}\left[\sum\limits_{s=0}^{q}\frac{w_2^{q-s}(-w_2^{-1})^s}{\Gamma{(q-s+1)}s!}\right]\left(\frac{z_2}{2}\right)^{q}e_2\\
     &=\sum_{p=0}^{\infty}\left(w_1-\frac{1}{w_1}\right)^p\frac{1}{p!}\left(\frac{z_1}{2}\right)^{p}e_1+\sum_{q=0}^{\infty}\left(w_2-\frac{1}{w_2}\right)^q\frac{1}{q!}\left(\frac{z_2}{2}\right)^{q}e_2\\
     &=\exp\left[\frac{z_1}{2}\left(w_1-\frac{1}{w_1}\right)\right]e_1+\exp\left[\frac{z_2}{2}\left(w_2-\frac{1}{w_2}\right)\right]e_2\\
     &=\exp\left[\frac{Z}{2}\left(W-\frac{1}{W}\right)\right].
      \end{align*}
      The proof is now complete.
 \end{proof}
\begin{corollary}
    From equation \eqref{eq:6r}, it is observe that $\mathcal{J}_n(Z)$ is the coefficients of the bicomplex Laurent series expansion of $\exp\left[\frac{1}{2}Z\left(W-\frac{1}{W}\right)\right]$ about $W=0$. According to the bicomplex Laurent theorem \cite{bc_laurent}, we obtain
    \begin{align*}
        \mathcal{J}_n(Z)=\frac{e_1}{2\pi i}\int_{C_1}W^{-n-1}\exp\left[\frac{1}{2}Z\left(W-\frac{1}{W}\right)\right]dW+\frac{e_2}{2\pi i}\int_{C_2}W^{-n-1}\exp\left[\frac{1}{2}Z\left(W-\frac{1}{W}\right)\right] dW,
    \end{align*}
    where $C_1$ and $C_2$ are contour encloses the singularity at the origin in $\mathcal{S}_1$ and $\mathcal{S}_2$ respectively.
\end{corollary}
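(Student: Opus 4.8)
The plan is to reinterpret the generating-function identity of the preceding theorem as a statement about Laurent coefficients and then invert it through the bicomplex analogue of Cauchy's coefficient formula. That theorem yields
\[
\exp\left[\frac{1}{2}Z\left(W-\frac{1}{W}\right)\right]=\sum_{n=-\infty}^{\infty}\mathcal{J}_n(Z)\,W^n,
\]
so $G(Z,\cdot)$ is exhibited as a two-sided (Laurent) series in $W$ whose $n$-th coefficient is exactly $\mathcal{J}_n(Z)$; the presence of both positive and negative powers reflects the essential singularity at $W=0$ and is precisely why the full Laurent theorem, rather than a Taylor expansion, is needed. The first step is to verify the hypotheses of the bicomplex Laurent theorem of \cite{bc_laurent}: that $G$ is bicomplex holomorphic on a bicomplex annular region about the origin that avoids the zero-divisor cone (i.e.\ with $w_1\neq0$ and $w_2\neq0$), and that its expansion there is unique. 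Holomorphicity is checked componentwise, since in idempotent form $G=G_1e_1+G_2e_2$ with each $G_j(w_j)=\exp\left[\frac{1}{2}z_j\left(w_j-\frac{1}{w_j}\right)\right]$ holomorphic on $\mathbb{C}^\ast$; uniqueness of the expansion then forces its coefficients to coincide with $\mathcal{J}_n(Z)$.

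Next I would invoke the bicomplex Laurent theorem, which recovers the coefficients as a bicomplex contour integral and, in idempotent form, splits into two ordinary complex Cauchy integrals. For $f(W)=\sum_n a_nW^n=f_1e_1+f_2e_2$ it gives
\[
a_n=\frac{e_1}{2\pi i}\int_{C_1}\frac{f_1(w_1)}{w_1^{n+1}}\,dw_1+\frac{e_2}{2\pi i}\int_{C_2}\frac{f_2(w_2)}{w_2^{n+1}}\,dw_2,
\]
where $C_1$ and $C_2$ are positively oriented contours enclosing the origin in the projection planes $\mathcal{S}_1$ and $\mathcal{S}_2$. Substituting $f=G$ gives $f_1(w_1)=\exp\left[\frac{1}{2}z_1\left(w_1-\frac{1}{w_1}\right)\right]$ and $f_2(w_2)=\exp\left[\frac{1}{2}z_2\left(w_2-\frac{1}{w_2}\right)\right]$.

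Finally I would rewrite each component integral back in bicomplex notation using the projections $Ze_1=z_1e_1$ and $We_1=w_1e_1$ (and likewise with $e_2$): multiplying $W^{-n-1}\exp\left[\frac{1}{2}Z\left(W-\frac{1}{W}\right)\right]$ by $e_1$ collapses it to $w_1^{-n-1}\exp\left[\frac{1}{2}z_1\left(w_1-\frac{1}{w_1}\right)\right]e_1$, which is exactly the $\mathcal{S}_1$ integrand, and symmetrically for $e_2$; this returns the stated formula. As a consistency check one may use the idempotent representation \eqref{eq:fa}, namely $\mathcal{J}_n(Z)=J_n(z_1)e_1+J_n(z_2)e_2$, together with the classical Schl\"afli integral $J_n(z)=\frac{1}{2\pi i}\int_C w^{-n-1}\exp\left[\frac{z}{2}\left(w-\frac{1}{w}\right)\right]dw$, to confirm that each idempotent component reproduces the corresponding classical Bessel coefficient. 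The only genuinely delicate point is the appeal to the bicomplex Laurent theorem: one must ensure the annular domain stays clear of the zero divisors so that the two component expansions are simultaneously valid and the bicomplex contour integral is well defined, after which the result is a direct coefficient extraction.
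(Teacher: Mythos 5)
Your proposal is correct and follows essentially the same route as the paper, which likewise treats the identity $G(Z,W)=\sum_{n}\mathcal{J}_n(Z)W^n$ as a bicomplex Laurent expansion about $W=0$ and reads off the coefficients via the bicomplex Laurent theorem of \cite{bc_laurent}, with the integral splitting into the two projection contours $C_1$ and $C_2$. The additional care you take about componentwise holomorphicity on the punctured planes, avoidance of the zero-divisor set, and the consistency check against the classical Schl\"afli integral only makes explicit what the paper leaves implicit.
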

 \section{Differential relations and differential equation of the bicomplex Bessel function}
 In this section, we develop differential equations that satisfy the bicomplex Bessel function and deduce differential relations.
 \begin{theo}
    Let  $Z=z_1e_1+z_2e_2, \mathcal{V}=\nu_1e_1+\nu_2e_2=\alpha_1 +j\alpha_2 \in{\mathbb{BC}}$ where $e_1=\frac{1+k}{2}, e_2=\frac{1-k}{2}$, then
    \begin{description}
    \item[(i)]
    $Z\mathcal{J}'_\mathcal{V}(Z)+\mathcal{V}\mathcal{J}_\mathcal{V}(Z)=Z\mathcal{J}_{\mathcal{V}-1}(Z)$
    \item [(ii)]
    $Z\mathcal{J}'_\mathcal{V}(Z)+Z\mathcal{J}_{\mathcal{V}+1}(Z)=\mathcal{V}\mathcal{J}_{\mathcal{V}}(Z)$
    \end{description}
\end{theo}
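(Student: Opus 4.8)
The plan is to work directly from the series definition \eqref{eq:gh} of $\mathcal{J}_\mathcal{V}(Z)$ and differentiate it term by term. Theorem \ref{eq:s3} shows that $Z^{-\mathcal{V}}\mathcal{J}_\mathcal{V}(Z)$ converges absolutely in the hyperbolic sense on all of $\mathbb{BC}$, so the power series defines a bicomplex holomorphic function and term-by-term bicomplex differentiation is legitimate. Differentiating \eqref{eq:gh} and multiplying by $Z$ yields
\begin{align*}
Z\mathcal{J}'_\mathcal{V}(Z)=\sum_{s=0}^{\infty}\frac{(-1)^s(\mathcal{V}+2s)}{\Gamma_b(\mathcal{V}+s+1)\,s!}\left(\frac{Z}{2}\right)^{\mathcal{V}+2s},
\end{align*}
which is the common starting point for both identities.

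For part (i), I would add $\mathcal{V}\mathcal{J}_\mathcal{V}(Z)$ to $Z\mathcal{J}'_\mathcal{V}(Z)$, so that the coefficient of $(Z/2)^{\mathcal{V}+2s}$ becomes $2(\mathcal{V}+s)$, and then invoke the functional equation of the bicomplex gamma function, $\Gamma_b(\mathcal{V}+s+1)=(\mathcal{V}+s)\Gamma_b(\mathcal{V}+s)$, which is immediate from the idempotent representation \eqref{eq:g} together with the classical identity $\Gamma(z+1)=z\Gamma(z)$. This cancels the factor $\mathcal{V}+s$ and collapses the sum to $2\sum_{s}\frac{(-1)^s}{\Gamma_b(\mathcal{V}+s)\,s!}(Z/2)^{\mathcal{V}+2s}$, which equals $Z\mathcal{J}_{\mathcal{V}-1}(Z)$ after extracting one factor of $Z/2$. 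For part (ii), I would instead subtract $\mathcal{V}\mathcal{J}_\mathcal{V}(Z)$, leaving the coefficient $2s$; the $s=0$ term vanishes, and writing $s/s!=1/(s-1)!$ followed by the index shift $s\mapsto s+1$ produces $-Z\mathcal{J}_{\mathcal{V}+1}(Z)$, which rearranges to the claimed relation.

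The only genuine subtlety is the justification of term-by-term differentiation and of the gamma recurrence for bicomplex arguments; both reduce, via the idempotent splitting into the $\mathbb{C}(i)$ components, to standard complex-analytic facts, so I expect no real obstacle beyond careful bookkeeping of the index shift in (ii). Indeed, the cleanest route may be to bypass the series manipulation altogether: passing to the idempotent form \eqref{eq:fa}, $\mathcal{J}_\mathcal{V}(Z)=J_{\nu_1}(z_1)e_1+J_{\nu_2}(z_2)e_2$, and noting that the bicomplex derivative acts componentwise so that $\mathcal{J}'_\mathcal{V}(Z)=J'_{\nu_1}(z_1)e_1+J'_{\nu_2}(z_2)e_2$, one can lift the classical recurrences $zJ'_\nu(z)+\nu J_\nu(z)=zJ_{\nu-1}(z)$ and $zJ'_\nu(z)-\nu J_\nu(z)=-zJ_{\nu+1}(z)$ component by component and recombine the $e_1$ and $e_2$ parts to obtain both identities at once. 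I would present the series computation as the primary argument, since it mirrors the technique already used for the recurrence relations, and remark on the componentwise shortcut as a verification.
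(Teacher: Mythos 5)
Your proposal is correct and uses essentially the same machinery as the paper: a term-by-term manipulation of the series \eqref{eq:gh}, justified componentwise through the idempotent splitting, combined with the gamma recurrence $\Gamma_b(\mathcal{V}+s+1)=(\mathcal{V}+s)\Gamma_b(\mathcal{V}+s)$ and an index shift for (ii). The only difference is organizational — the paper packages the same computation as evaluating $\frac{d}{dZ}\bigl(Z^{\pm\mathcal{V}}\mathcal{J}_\mathcal{V}(Z)\bigr)$ in two ways (once from the series, once by the product rule) and equating the results, which is algebraically equivalent to your direct computation of $Z\mathcal{J}'_\mathcal{V}(Z)\pm\mathcal{V}\mathcal{J}_\mathcal{V}(Z)$; your componentwise lift of the classical recurrences is likewise what the paper's calculation amounts to under the hood.
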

\begin{proof}
   (i) Applying the derivative formula for bicomplex functions along with equation \eqref{eq:gh}, we have
\begin{align*}
    &\frac{d}{dZ} \left( Z^{\mathcal{V}}\mathcal{J}_\mathcal{V}(Z)\right)\\
     &=\frac{d}{dz_1}\left(
  \sum_{s=0}^{\infty}\frac{(-1)^sz_1^{{2\nu_1+2s}}}{\Gamma{(\nu_1+s+1)}s!2^{{\nu_1+2s}}}\right)e_1+\frac{d}{dz_1}\left(
  \sum_{s=0}^{\infty}\frac{(-1)^sz_2^{{2\nu_2+2s}}}{\Gamma{(\nu_2+s+1)}s!2^{{\nu_2+2s}}}\right) e_2\nonumber\\
     &=\left(
  \sum_{s=0}^{\infty}\frac{(-1)^s(2\nu_1+2s)z_1^{{2\nu_1+2s-1}}}{\Gamma{(\nu_1+s+1)}s!2^{{\nu_1+2s}}}\right)e_1+\left(
  \sum_{s=0}^{\infty}\frac{(-1)^s(2\nu_2+2s)z_2^{{2\nu_2+2s-1}}}{\Gamma{(\nu_2+s+1)}s!2^{{\nu_2+2s}}}\right) e_2\nonumber\\
     &=\left(z_1^{\nu_1}
  \sum_{s=0}^{\infty}\frac{(-1)^s}{\Gamma{(\nu_1+s)}s!}\left(\frac{z_1}{2}\right)^{\nu_1+2s-1}\right)e_1+\left(z_2^{\nu_2}
  \sum_{s=0}^{\infty}\frac{(-1)^s}{\Gamma{(\nu_2+s)}s!}\left(\frac{z_2}{2}\right)^{\nu_2+2s-1}\right) e_2\\
    &=\left(z_1^{\nu_1}J_{\nu_1-1}\right)e_1+\left(z_2^{\nu_2}J_{\nu_2-1}\right)e_2\\
    &=Z^{\mathcal{V}}\mathcal{J}_{\mathcal{V}-1}(Z).
    \end{align*}
    Again,
    \begin{align*}
        \frac{d}{dZ} \left( Z^{\mathcal{V}}\mathcal{J}_\mathcal{V}(Z)\right)&=\frac{d}{dz_1}(z_1^{\nu_1}J_{\nu_1}(z_1))e_1+\frac{d}{dz_2}(z_2^{\nu_2}J_{\nu_2}(z_2))e_2\\
        &=(z_1^{\nu_1}J'_{\nu_1}(z_1)+\nu_1z_1^{\nu_1-1}J_{\nu_1}(z_1))e_1+(z_2^{\nu_2}J'_{\nu_2}(z_2)+\nu_2z_2^{\nu_2-1}J_{\nu_2}(z_2))e_2\\
        &=Z^{\mathcal{V}}\mathcal{J}'_\mathcal{V}(Z)+\mathcal{V}Z^{\mathcal{V}-1}\mathcal{J}_\mathcal{V}(Z)
    \end{align*}
    Hence, the proof of (i) is complete.\\
    (ii) Multiplying both side by $Z^{-\mathcal{V}}$ in \eqref{eq:gh} and differentiating, we get
    \begin{align*}
    &\frac{d}{dZ} \left( Z^{-\mathcal{V}}\mathcal{J}_\mathcal{V}(Z)\right)\\
     &=\frac{d}{dz_1}\left(
  \sum_{s=0}^{\infty}\frac{(-1)^sz_1^{{2s}}}{\Gamma{(\nu_1+s+1)}s!2^{{\nu_1+2s}}}\right)e_1+\frac{d}{dz_1}\left(
  \sum_{s=0}^{\infty}\frac{(-1)^sz_2^{{2s}}}{\Gamma{(\nu_2+s+1)}s!2^{{\nu_2+2s}}}\right) e_2\nonumber\\
     &=\left(
  \sum_{s=0}^{\infty}\frac{(-1)^s(2s)z_1^{{2s-1}}}{\Gamma{(\nu_1+s+1)}s!2^{{\nu_1+2s}}}\right)e_1+\left(
  \sum_{s=0}^{\infty}\frac{(-1)^s(2s)z_2^{{2s-1}}}{\Gamma{(\nu_2+s+1)}s!2^{{\nu_2+2s}}}\right) e_2\nonumber\\
     &=\left(
  \sum_{s=1}^{\infty}\frac{(-1)^s}{\Gamma{(\nu_1+s+1)}(s-1)!}\left(\frac{z_1}{2}\right)^{2s-1}\right)e_1+\left(
  \sum_{s=1}^{\infty}\frac{(-1)^s}{\Gamma{(\nu_2+s+1)}(s-1)!}\left(\frac{z_2}{2}\right)^{2s-1}\right) e_2\\
  &=\left[
  \sum_{s=0}^{\infty}\frac{(-1)^{(s+1)}}{\Gamma{(\nu_1+s+2)}s!}\left(\frac{z_1}{2}\right)^{2s+1}\right]e_1+\left[
  \sum_{s=0}^{\infty}\frac{(-1)^{s+1}}{\Gamma{(\nu_2+s+2)}s!}\left(\frac{z_2}{2}\right)^{2s+1}\right] e_2\\
    &=-\left(z_1^{-\nu_1}J_{\nu_1+1}\right)e_1-\left(z_2^{-\nu_2}J_{\nu_2+1}\right)e_2\\
    &=-Z^{-\mathcal{V}}\mathcal{J}_{\mathcal{V}+1}(Z).
    \end{align*}
     Again,
    \begin{align*}
        \frac{d}{dZ} \left( Z^{-\mathcal{V}}\mathcal{J}_\mathcal{V}(Z)\right)=Z^{-\mathcal{V}}\mathcal{J}'_\mathcal{V}(Z)-\mathcal{V}Z^{-\mathcal{V}-1}\mathcal{J}_\mathcal{V}(Z).
    \end{align*}
    Hence the proof of (ii) is complete.
\end{proof}

\begin{theo}
     Suppose that  $Z=z_1e_1+z_2e_2, \mathcal{V}=\nu_1e_1+\nu_2e_2 \in{\mathbb{BC}}$ with $Z\notin \mathbb{O}_2$. Bicomplex Bessel  function $ \mathcal{J}_\mathcal{V}(Z)$ satisfies the differential equation $$Z^2\frac{d^2V}{dZ^2}+Z\frac{dV}{dZ}+(Z^2-\mathcal{V}^2)V=0.$$
 \end{theo}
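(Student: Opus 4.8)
The plan is to reduce the bicomplex equation to the two classical complex Bessel equations that hold component-wise, exploiting the idempotent decomposition established in Theorem~\ref{eq:s3}. By that theorem we may write $\mathcal{J}_\mathcal{V}(Z)=J_{\nu_1}(z_1)e_1+J_{\nu_2}(z_2)e_2$, and by the definition \eqref{eq:5} each scalar Bessel function $J_{\nu_j}$ is a solution of the classical Bessel differential equation
\begin{align*}
z_j^2 J_{\nu_j}''(z_j)+z_j J_{\nu_j}'(z_j)+(z_j^2-\nu_j^2)J_{\nu_j}(z_j)=0,\qquad j=1,2.
\end{align*}
The whole argument then amounts to lifting these two scalar identities to $\mathbb{BC}$ through the projections $\mathcal{S}_1,\mathcal{S}_2$.

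First I would record how the bicomplex derivative acts on an idempotent sum. Since $e_1,e_2$ are orthogonal idempotents with $e_1+e_2=1$ and $e_1e_2=0$, and since the hypothesis $Z\notin\mathcal{O}_2$ guarantees $z_1\neq0$ and $z_2\neq0$, bicomplex differentiability of $V=V_1(z_1)e_1+V_2(z_2)e_2$ off the zero-divisor cone is equivalent to ordinary complex differentiability of each component, and the quotient defining the derivative splits as
\begin{align*}
\frac{dV}{dZ}=\frac{dV_1}{dz_1}e_1+\frac{dV_2}{dz_2}e_2.
\end{align*}
Applying this to $\mathcal{J}_\mathcal{V}$ twice yields $\mathcal{J}_\mathcal{V}'(Z)=J_{\nu_1}'(z_1)e_1+J_{\nu_2}'(z_2)e_2$ and $\mathcal{J}_\mathcal{V}''(Z)=J_{\nu_1}''(z_1)e_1+J_{\nu_2}''(z_2)e_2$.

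Next I would assemble the bicomplex equation using idempotent arithmetic. Writing $Z^2=z_1^2e_1+z_2^2e_2$ and $\mathcal{V}^2=\nu_1^2e_1+\nu_2^2e_2$, and using $e_j^2=e_j$ together with $e_1e_2=0$ to collapse all cross terms, the expression
\begin{align*}
Z^2\mathcal{J}_\mathcal{V}''(Z)+Z\mathcal{J}_\mathcal{V}'(Z)+(Z^2-\mathcal{V}^2)\mathcal{J}_\mathcal{V}(Z)
\end{align*}
reduces, slot by slot, to $e_1$ times the $j=1$ scalar Bessel equation plus $e_2$ times the $j=2$ scalar Bessel equation, each of which vanishes by the classical theory. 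Setting $V=\mathcal{J}_\mathcal{V}(Z)$ then gives the claimed differential equation.

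The main obstacle is the second step: one must justify rigorously that bicomplex holomorphicity off the zero-divisor cone decomposes as complex holomorphicity in each idempotent slot, so that the derivative acts slot-wise. This rests on the bicomplex Cauchy--Riemann equations \eqref{eq:i5} together with the fact that $\mathcal{S}_1,\mathcal{S}_2$ are ring homomorphisms onto $\mathbb{C}(i)$; once this is in place the remaining algebra is entirely routine. An alternative I would keep in reserve is to differentiate the defining series \eqref{eq:gh} termwise---permissible by the absolute hyperbolic convergence proved in Theorem~\ref{eq:s3}---and verify the annihilation of the resulting series directly via an index shift, which bypasses any structural appeal but demands more careful bookkeeping of the coefficients.
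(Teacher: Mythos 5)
Your proof is correct, but it follows a genuinely different route from the paper's. You reduce the bicomplex equation to the two classical complex Bessel equations via the idempotent decomposition $\mathcal{J}_\mathcal{V}(Z)=J_{\nu_1}(z_1)e_1+J_{\nu_2}(z_2)e_2$, using that bicomplex differentiation off $\mathcal{O}_2$ acts slot-wise (and your observation that $Z\notin\mathcal{O}_2$ forces $z_1\neq0$ and $z_2\neq0$ is right, since $\lambda_1^2+\lambda_2^2=z_1z_2$). The paper instead stays entirely in $\mathbb{BC}$: it rewrites the function in the hypergeometric form \eqref{eq:do}, $\mathcal{J}_\mathcal{V}(Z)=\frac{Z^\mathcal{V}}{2^\mathcal{V}\Gamma_b(\mathcal{V}+1)}\,{}_{0}F_{1}\left(-;\mathcal{V}+1;-\frac{Z^2}{4}\right)$, invokes the differential equation \eqref{eq:in} satisfied by the bicomplex generalized hypergeometric function with $p=0$, $q=1$, then performs the change of variable $Y=-Z^2/4$ followed by the substitution $W=Z^{-\mathcal{V}}V$ to arrive at the Bessel equation. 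Your argument is shorter and more elementary once the slot-wise action of $d/dZ$ is justified --- a standard fact of bicomplex function theory that the paper itself uses implicitly in its proofs of the differential relations, so you could cite that precedent rather than re-deriving it from the Cauchy--Riemann system \eqref{eq:i5}. What the paper's route buys is the intermediate representation \eqref{eq:do} as a ${}_0F_1$, which is not a throwaway: it is reused as the starting point of the asymptotic expansion in the following section, so the longer derivation earns its keep there. Your fallback of termwise differentiation of \eqref{eq:gh}, licensed by the absolute hyperbolic convergence from Theorem \ref{eq:s3}, would also work but is the most laborious of the three options.
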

 \begin{proof}
 Another representation of bicomplex Bessel function is given by
  \begin{align}\label{eq:do}
     \mathcal{J}_\mathcal{V}(Z)&=
  \sum_{s=0}^{\infty}\frac{(-1)^s}{\Gamma_b{(\mathcal{V}+s+1)}s!}\left(\frac{Z}{2}\right)^{\mathcal{V}+2s}\nonumber\\
  &=\sum_{s=0}^{\infty}\frac{(-1)^s}{\Gamma{(\nu_1+s+1)}s!}\left(\frac{z_1}{2}\right)^{\nu_1+2s}e_1+\sum_{s=0}^{\infty}\frac{(-1)^s}{\Gamma{(\nu_2+s+1)}s!}\left(\frac{z_2}{2}\right)^{\nu_2+2s}e_2\nonumber\\
  &=\frac{1}{\Gamma(\nu_1+1)}\sum_{s=0}^{\infty}\frac{(-1)^s\Gamma(\nu_1+1)}{\Gamma{(\nu_1+s+1)}s!}\left(\frac{z_1}{2}\right)^{\nu_1+2s}e_1+\frac{1}{\Gamma(\nu_2+1)}\sum_{s=0}^{\infty}\frac{(-1)^s\Gamma(\nu_2+1)}{\Gamma{(\nu_2+s+1)}s!}\left(\frac{z_2}{2}\right)^{\nu_2+2s}e_2\nonumber\\
  &=\frac{z_1^{\nu_1}}{2^{\nu_1}\Gamma(\nu_1+1)}\sum_{s=0}^{\infty}\frac{1}{(\nu_1+1)_ss!}\left(-\frac{z_1^2}{4}\right)^{s}e_1+\frac{z_2^{\nu_2}}{2^{\nu_2}\Gamma(\nu_2+1)}\sum_{s=0}^{\infty}\frac{1}{(\nu_2+1)_ss!}\left(-\frac{z_2^2}{4}\right)^{s}e_2\nonumber\\
 &=\frac{z_1^{\nu_1}}{2^{\nu_1}\Gamma(\nu_1+1)}\times{}_{0} F_{1}\left[\begin{matrix} & -\; ;\\& \nu_1+1;&\end{matrix}-\frac{z_1^2}{4}\right]e_1+\frac{z_2^{\nu_2}}{2^{\nu_2}\Gamma(\nu_2+1)}\times{}_{0} F_{1}\left[\begin{matrix} & -\; ;\\& \nu_2+1;&\end{matrix}-\frac{z_2^2}{4}\right]e_2\nonumber\\
   &=\frac{Z^\mathcal{V}}{2^\mathcal{V}\Gamma_b(\mathcal{V}+1)}\times{}_{0} F_{1}\left[\begin{matrix} &-\; ;\\& \mathcal{V}+1;&\end{matrix}-\frac{Z^2}{4}\right].
  \end{align}
   Substituting $p=0$ and $q=1$ into equation \eqref{eq:in}, yields a specific solution $W={}_{0} F_{1}(-; b_1;Y)$ to the differential equation
   \begin{align}\label{eq:nk}
       Y\frac{d^2W}{dY^2}+b_1\frac{dW}{dY}-W=0.
   \end{align}
 Now we put  $b_1=\mathcal{V}+1$ and $Y=-\frac{Z^2}{4}$ in \eqref{eq:nk}, we obtain a differential equation
 \begin{align}\label{eq:a6}
     Z\frac{d^2W}{dZ^2}+(2\mathcal{V}+1)\frac{dW}{dZ}+ZW=0
 \end{align}
 whose one particular solution is $W={}_{0} F_{1}\left(-;\mathcal{V}+1;-\frac{Z^2}{4}\right)$. Again setting $W=Z^{-\mathcal{V}}V$ in \eqref{eq:a6}, we have
 \begin{align*}
     &Z\left(Z^{-\mathcal{V}}\frac{d^2V}{dZ^2}-2\mathcal{V}Z^{-(\mathcal{V}+1)}\frac{dV}{dZ}+\mathcal{V}(\mathcal{V}+1)Z^{-(\mathcal{V}+2)}V\right)\\&\hspace{90pt}+(2\mathcal{V}+1)\left(Z^\mathcal{-V}\frac{dV}{dZ}-\mathcal{V}Z^{-(\mathcal{V}+1)}V\right)
     +Z^{1\mathcal{-V}}V=0
 \end{align*}
 and it implies that
 \begin{align*}
     Z^2\frac{d^2V}{dZ^2}+Z\frac{dV}{dZ}+(Z^2-\mathcal{V}^2)V=0.
 \end{align*}
 This completes the proof.
 \end{proof}
\section{Asymptotic expansions and bicomplex holomorphicity of the bicomplex Bessel function}
Asymptotic expansion is a mathematical technique used to approximate functions, integrals, or solutions to differential equations. These expansions are particularly useful in analyzing the behavior of functions as a parameter approaches a certain limit, such as infinity or zero. Asymptotic expansions are used in quantum mechanics and electrodynamics, fluid Dynamics, statistical mechanics, number theory etc. In this section, we analyze the asymptotic behavior and discuss the bicomplex holomorphicity of bicomplex Bessel function $\mathcal{J}_\mathcal{V}(Z)$.
The asymptotic power series representation of a function $f(z)$ is given by \cite{sp_function}\;:
\begin{align*}f(z)\thicksim\sum_{k=0}^{\infty}a_kz^{-k},\; \mbox{as}\; |z|\rightarrow \infty,
\end{align*}
provided that
$$\lim_{|z|\rightarrow\infty}z^n\left[f(z)-\sum_{k=0}^{n}a_kz^{-k}\right]=0,$$ for each fixed $n$. We define asymptotic expansions of bicomplex valued function $g(Z)$ as
\begin{align*}
g(Z)\thicksim\sum_{k=0}^{\infty}b_kZ^{-k},\; |Z|_h\rightarrow \infty,
\end{align*}
if, for each fixed $n>0$, $$\lim_{|Z|_h\rightarrow\infty}Z^n\left[g(Z)-\sum_{k=0}^{n}b_kZ^{-k}\right]=0.$$
\begin{theo}
    If $Z=|Z|_h$, then asymptotic expansion of bicomplex Bessel function is given by
     \begin{align*}
        \mathcal{J}_\mathcal{V}(Z)&\thicksim\frac{\exp((1-i)Z)}{2^\mathcal{V}\Gamma_b(\mathcal{V}+1)}\frac{\Gamma_b(2\mathcal{V}+1)}{\left(\Gamma_b(\mathcal{V}+\frac{1}{2})\right)^2}\sum_{k=0}^{n-1}\frac{\left(-\mathcal{V}+\frac{1}{2}\right)_k}{k!Z^k}\Gamma_b(\mathcal{V}+k+\frac{1}{2}),\; \text{as}\;|Z|_h\rightarrow \infty.
     \end{align*}
 \end{theo}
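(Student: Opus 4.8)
The plan is to reduce everything to the classical (complex) asymptotics through the idempotent decomposition, which is the common thread of every preceding proof. By the idempotent form \eqref{eq:fa}, $\mathcal{J}_\mathcal{V}(Z) = J_{\nu_1}(z_1)e_1 + J_{\nu_2}(z_2)e_2$, and the hypothesis $Z = |Z|_h$ forces $z_1, z_2 \geq 0$ to be real, so $|Z|_h \to \infty$ is equivalent to letting $z_1 \to \infty$ and $z_2 \to \infty$ simultaneously along the positive real axis in each slot. Since $e_1, e_2$ are orthogonal idempotents and the hyperbolic norm splits as $|Z|_h = |z_1|e_1 + |z_2|e_2$, the bicomplex asymptotic defined in this section holds if and only if each complex component admits the corresponding classical expansion; it therefore suffices to establish the statement for $J_{\nu_1}(z_1)$ and $J_{\nu_2}(z_2)$ and then reassemble.

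First I would fix one slot, say $J_\nu(z)$ with $\nu=\nu_1$, $z=z_1$, and pass to the confluent hypergeometric representation. Starting from the ${}_0F_1$ form \eqref{eq:do} and the standard confluence identity, one has
\begin{align*}
J_\nu(z) = \frac{(z/2)^\nu}{\Gamma(\nu+1)}\,e^{-iz}\,{}_1F_1\!\left(\nu+\tfrac{1}{2};\,2\nu+1;\,2iz\right),
\end{align*}
whose integral form is exactly Lemma \ref{ls} specialised to $\alpha=\nu+\tfrac12$, $\beta=2\nu+1$. The essential input is then the large-argument asymptotic of the confluent function,
\begin{align*}
{}_1F_1(\alpha;\beta;w) \sim \frac{\Gamma(\beta)}{\Gamma(\alpha)}\,e^{w}\,w^{\alpha-\beta}\sum_{k=0}^{\infty}\frac{(\beta-\alpha)_k(1-\alpha)_k}{k!}\,w^{-k},
\end{align*}
valid as $|w|\to\infty$ in the appropriate half-plane. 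Substituting $w=2iz$, $\alpha-\beta=-\nu-\tfrac12$, $\beta-\alpha=\nu+\tfrac12$, $1-\alpha=-\nu+\tfrac12$, and using $(\nu+\tfrac12)_k=\Gamma(\nu+k+\tfrac12)/\Gamma(\nu+\tfrac12)$, produces precisely the factors $\Gamma(2\nu+1)/(\Gamma(\nu+\tfrac12))^2$, $(-\nu+\tfrac12)_k/k!$ and $\Gamma(\nu+k+\tfrac12)$ of the statement, while the prefactor $(z/2)^\nu/\Gamma(\nu+1)$ delivers $1/(2^\nu\Gamma(\nu+1))$ and the product of exponentials collapses to the claimed exponential factor.

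Having both slots in this shape, I would recombine them via $\Gamma(z_1)e_1+\Gamma(z_2)e_2=\Gamma_b(Z)$ in \eqref{eq:g} (applied to the arguments $\mathcal{V}+1$, $2\mathcal{V}+1$, $\mathcal{V}+\tfrac12$ and $\mathcal{V}+k+\tfrac12$), the analogous splitting of the Pochhammer symbol $(-\mathcal{V}+\tfrac12)_k$, and the linearity of the $e_1,e_2$ coefficients, which welds the two component series into the single bicomplex series of the theorem. I would then verify the defining limit $\lim_{|Z|_h\to\infty}Z^n[\,\mathcal{J}_\mathcal{V}(Z)-\text{(partial sum)}\,]=0$ by checking it in each idempotent slot, where it becomes exactly the classical remainder estimate for the confluent hypergeometric asymptotic.

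The main obstacle I anticipate is not the algebra but the analytic justification of the remainder: the asymptotic of ${}_1F_1$ is \emph{sector dependent} (Stokes phenomenon), so one must confirm that the arguments $2iz_1$, $2iz_2$ remain in the sector where the chosen expansion is dominant as $z_1,z_2\to+\infty$, and that the error is controlled uniformly enough that multiplication by $Z^n$ followed by $|Z|_h\to\infty$ still yields zero in each slot. Pinning down the precise constant and the exponential prefactor against the normalisation used in the statement (so that the two complex exponentials combine into the stated bicomplex exponential) is the one place where care with branch choices is indispensable.
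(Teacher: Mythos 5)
Your overall strategy overlaps with the paper's more than you might expect: the paper also passes through the confluent hypergeometric representation $\mathcal{J}_\mathcal{V}(Z)=\frac{Z^{\mathcal{V}}}{2^{\mathcal{V}}\Gamma_b(\mathcal{V}+1)}\exp(-iZ)\,{}_1F_1\left(\mathcal{V}+\tfrac12;2\mathcal{V}+1;-2iZ\right)$, and the idempotent splitting you describe is exactly how that identity is justified there. Where you diverge is at the asymptotic step itself: the paper does \emph{not} cite the known large-argument expansion of ${}_1F_1$; it invokes the Euler-type integral representation of Lemma \ref{ls}, writes $\int_0^1=\int_{-\infty}^1-\int_{-\infty}^0$, substitutes $t=1-s/|Z|_h$ (resp.\ $t=-s/|Z|_h$), discards the second integral as subdominant, Taylor-expands $\left(1-s/|Z|_h\right)^{\mathcal{V}-\frac12}$ to $n$ terms, and bounds the remainder $R_n$ --- i.e., it re-derives a Watson's-lemma expansion at the bicomplex level. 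So your route is genuinely different at the decisive step.

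The gap is the one you yourself flag and do not close. With $z_1,z_2\to+\infty$ along the positive real axis, the argument $w=2iz$ of your ${}_1F_1$ tends to infinity along the imaginary axis, which is precisely where the single dominant-term expansion you quote is not valid: there $|e^{w}|=1$ and the second (recessive) contribution $\frac{\Gamma(\beta)}{\Gamma(\beta-\alpha)}(-w)^{-\alpha}\sum_k\cdots$ is of the same algebraic order, so both must be retained (this is why $J_\nu(x)$ for real $x$ has a cosine, not a single exponential, in its leading behaviour). Keeping only the term you keep therefore does not produce a valid asymptotic in this regime, and it also does not land on the stated formula: tracking your exponentials gives $e^{-iz}\cdot e^{2iz}=e^{iz}$, whereas the theorem claims $\exp((1-i)Z)$, and the powers of $2i$ coming from $w^{\alpha-\beta}w^{-k}$ do not reduce to the bare $Z^{-k}$ of the statement. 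The factor $\exp((1-i)Z)$ arises in the paper only from its specific integral manipulation (the kernel there is $\exp(Zt)$, a genuine Laplace integral whose maximum sits at the endpoint $t=1$), so to reproduce the theorem as stated you would have to follow the integral-representation route rather than quote the sector-dependent ${}_1F_1$ expansion.
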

 \begin{proof}
 We start with the representation \eqref{eq:do} of bicomplex Bessel function $\mathcal{J}_\mathcal{V}(Z)$\;:
  \begin{align}\label{eq:n4}
      \mathcal{J}_\mathcal{V}(Z)&=\frac{Z^\mathcal{V}}{2^\mathcal{V}\Gamma_b(\mathcal{V}+1)}\times{}_{0} F_{1}\left[\begin{matrix} &-\; ;\\& \mathcal{V}+1;&\end{matrix}-\frac{Z^2}{4}\right]\nonumber\\
      &=\frac{Z^\mathcal{V}}{2^\mathcal{V}\Gamma_b(\mathcal{V}+1)}\left[{}_{0} F_{1}\left[\begin{matrix} &-\; ;\\& \nu_1+1;&\end{matrix}-\frac{z_1^2}{4}\right]e_1+{}_{0} F_{1}\left[\begin{matrix} &-\; ;\\& \nu_2+1;&\end{matrix}-\frac{z_2^2}{4}\right]e_2\right]
  \end{align}
  Applying the relation of confluent hypergeometric function \cite{sp_function}: $$ e^{-z} {}_1F_{1}(c;2c;2z)={}_0F_1(-;c+\frac{1}{2};\frac{z^2}{4}), $$ and Lemma \ref{ls} in \eqref{eq:n4}, we get
  \begin{align}\label{eq:s9}
       \mathcal{J}_\mathcal{V}(Z)
       &=\frac{Z^\mathcal{V}}{2^\mathcal{V}\Gamma_b(\mathcal{V}+1)}\left[\exp(-iz_1){}_{1} F_{1}\left[\begin{matrix} &\nu_1+\frac{1}{2}\; ;\\& 2\nu_1+1;&\end{matrix}-2iz_1\right]e_1+\exp(-iz_2){}_{1} F_{1}\left[\begin{matrix} &\nu_2+\frac{1}{2}\; ;\\& 2\nu_2+1;&\end{matrix}2iz_2\right]e_2\right]\nonumber\\
       &=\frac{Z^\mathcal{V}\left[\exp(-iz_1)e_1+\exp(-iz_2)e_2\right]}{2^\mathcal{V}\Gamma_b(\mathcal{V}+1)}\left[{}_{1} F_{1}\left[\begin{matrix} &\nu_1+\frac{1}{2}\; ;\\& 2\nu_1+1;&\end{matrix}-2iz_1\right]e_1+{}_{1} F_{1}\left[\begin{matrix} &\nu_2+\frac{1}{2}\; ;\\& 2\nu_2+1;&\end{matrix}2iz_2\right]e_2\right]\nonumber\\
       &=\frac{Z^\mathcal{V}}{2^\mathcal{V}\Gamma_b(\mathcal{V}+1)}\exp(-iZ){}_{1} F_{1}\left[\begin{matrix} &\mathcal{V}+\frac{1}{2}\; ;\\& 2\mathcal{V}+1;&\end{matrix}-2iZ\right]\nonumber\\
       &=\frac{Z^\mathcal{V}\exp(-iZ)}{2^\mathcal{V}\Gamma_b(\mathcal{V}+1)}\frac{\Gamma_b(2\mathcal{V}+1)}{\left(\Gamma_b(\mathcal{V}+\frac{1}{2})\right)^2}\int_{C}t^{\mathcal{V}-\frac{1}{2}}(1-t)^{\mathcal{V}-\frac{1}{2}}\exp(Zt)dt,
  \end{align}
 where $t=t_1e_1+t_2e_2$ with $0<t_1,t_2<1$. Here $Z=|Z|_h$ and the integral of \eqref{eq:s9} can be express as
 \begin{align*}
     \mathcal{J}_\mathcal{V}(Z)=\frac{|Z|_h^\mathcal{V}\exp(-i|Z|_h)}{2^\mathcal{V}\Gamma_b(\mathcal{V}+1)}\frac{\Gamma_b(2\mathcal{V}+1)}{\left(\Gamma_b(\mathcal{V}+\frac{1}{2})\right)^2}\left[\int_{C_1}t^{\mathcal{V}-\frac{1}{2}}(1-t)^{\mathcal{V}-\frac{1}{2}}\exp(|Z|_ht)dt\right.\\\left.-\int_{C_2}t^{\mathcal{V}-\frac{1}{2}}(1-t)^{\mathcal{V}-\frac{1}{2}}\exp(|Z|_ht)dt\right],
 \end{align*}
 where $C_1$ and $C_2$ are two curve in $\mathbb{BC}$ with $-\infty\leq t_1,t_2\leq1$ and $-\infty\leq t_1,t_2\leq0$ respectively. Setting $t=1-\frac{s}{|Z|_h}$ in the first integral and $t=-\frac{s}{|Z|_h}$ in the second integral, where $s=s_1e_1+s_2e_2$, we obtain
 \begin{align}\label{eq:m3}
      \mathcal{J}_\mathcal{V}(Z)&=\frac{\exp((1-i)|Z|_h)}{2^\mathcal{V}|Z|_h^{\frac{1}{2}}\Gamma_b(\mathcal{V}+1)}\frac{\Gamma_b(2\mathcal{V}+1)}{\left(\Gamma_b(\mathcal{V}+\frac{1}{2})\right)^2}\left[\int_{C_1}\left(s\right)^{\mathcal{V}-\frac{1}{2}}\left(1-\frac{s}{|Z|_h}\right)^{\mathcal{V}-\frac{1}{2}}\exp(-s)ds\right]\nonumber\\
      &+\frac{\exp(-i|Z|_h)}{2^\mathcal{V} (-1)^{\mathcal{V}+\frac{1}{2}}|Z|_h^{\frac{1}{2}}\Gamma_b(\mathcal{V}+1)}\frac{\Gamma_b(2\mathcal{V}+1)}{\left(\Gamma_b(\mathcal{V}+\frac{1}{2})\right)^2}\left[\int_{C_2}(-s)^{\mathcal{V}+\frac{1}{2}}(1+\frac{s}{|Z|_h})^{\mathcal{V}-\frac{1}{2}}\exp(-s)ds\right]
 \end{align}
 For large value of $|Z|_h$ the second term in \eqref{eq:m3} negligible compared to the first term. Therefore,
 \begin{align}\label{eq:n3}
     \mathcal{J}_\mathcal{V}(Z)\thicksim \frac{\exp((1-i)|Z|_h)}{2^\mathcal{V}|Z|_h^{\frac{1}{2}}\Gamma_b(\mathcal{V}+1)}\frac{\Gamma_b(2\mathcal{V}+1)}{\left(\Gamma_b(\mathcal{V}+\frac{1}{2})\right)^2}\left[\int_{C_1}\left(s\right)^{\mathcal{V}-\frac{1}{2}}\left(1-\frac{s}{|Z|_h}\right)^{\mathcal{V}-\frac{1}{2}}\exp(-s)ds\right]
 \end{align}
Now we apply the Taylor formula of $(1-z)^a$ on idempotent components of $\left(1-\frac{s}{|Z|_h}\right)^{\mathcal{V}-\frac{1}{2}}$, we have
 \begin{align}\label{eq:c7}
     \left(1-\frac{s}{|Z|_h}\right)^{\mathcal{V}-\frac{1}{2}}&=\left(1-\frac{s_1}{|z_1|}\right)^{\nu_1-\frac{1}{2}}e_1+\left(1-\frac{s_2}{|z_2|}\right)^{\nu_2-\frac{1}{2}}e_2\nonumber\\
     &=\left[\sum_{k=0}^{n-1}\frac{\left(-\nu_1+\frac{1}{2}\right)_k}{k!|z_1|^k}s_1^k+\frac{\left(-\nu_1+\frac{1}{2}\right)_n}{n!|z_1|^n}\left(1-\frac{s_1'}{|z_1|}\right)^{\nu_1-\frac{1}{2}-n}s_1^n\right]e_1\nonumber\\
     &+\left[\sum_{k=0}^{n-1}\frac{\left(-\nu_2+\frac{1}{2}\right)_k}{k!|z_2|^k}s_2^k+\frac{\left(-\nu_2+\frac{1}{2}\right)_n}{n!|z_1|^n}\left(1-\frac{s_2'}{|z_2|}\right)^{\nu_2-\frac{1}{2}-n}s_2^n\right]e_2\nonumber\\
     &=\left[\sum_{k=0}^{n-1}\frac{\left(-\mathcal{V}+\frac{1}{2}\right)_k}{k!|Z|_h^k}s^k+\frac{\left(-\mathcal{V}+\frac{1}{2}\right)_n}{n!|Z|_h^n}\left(1-\frac{s'}{|Z|_h}\right)^{\nu_1-\frac{1}{2}-n}s^n\right],
 \end{align}
 where $s'=s'_1e_1+s'_2e_2$ which satisfies $0<s'_1<s_1$ and $0<s'_2<s_2$. Setting \eqref{eq:c7} in \eqref{eq:n3}, we get
 \begin{align}
      \mathcal{J}_\mathcal{V}(Z)&\thicksim \frac{\exp((1-i)|Z|_h)}{2^\mathcal{V}|Z|^{\frac{1}{2}}\Gamma_b(\mathcal{V}+1)}\frac{\Gamma_b(2\mathcal{V}+1)}{\left(\Gamma_b(\mathcal{V}+\frac{1}{2})\right)^2}\left[\sum_{k=0}^{n-1}\frac{\left(-\mathcal{V}+\frac{1}{2}\right)_k}{k!|Z|_h^k}\int_{C_1}\left(s\right)^{\mathcal{V}+k-\frac{1}{2}}\exp(-s)ds+R_n\right]\nonumber\\
      &=\frac{\exp\left((1-i)|Z|_h\right)}{2^\mathcal{V}|Z|_h^{\frac{1}{2}}\Gamma_b(\mathcal{V}+1)}\frac{\Gamma_b(2\mathcal{V}+1)}{\left(\Gamma_b(\mathcal{V}+\frac{1}{2})\right)^2}\left[\sum_{k=0}^{n-1}\frac{\left(-\mathcal{V}+\frac{1}{2}\right)_k}{k!|Z|_h^k}\Gamma_b\left(\mathcal{V}+k+\frac{1}{2}\right)+R_n\right]\nonumber,
 \end{align}
 where
 \begin{align*}
     R_n=\frac{\left(-\mathcal{V}+\frac{1}{2}\right)_n}{n!|Z|_h^n}\int_{C_1}\left(s\right)^{\mathcal{V}+n-\frac{1}{2}}\left(1-\frac{s'}{|Z|_h}\right)^{\mathcal{V}-\frac{1}{2}-n}\exp(-s)ds.
 \end{align*}
 For large value of $|Z|_h$ and $\Re(\alpha_1)-n-\frac{1}{2}>\Im(\alpha_2)$, it follows that $\left(1-\frac{s'}{Z}\right)^{\mathcal{V}-n-\frac{1}{2}}<_h1$. Therefore,
\begin{align*}
    |R_n|_h<_h\left|\frac{\left(-\mathcal{V}+\frac{1}{2}\right)_n}{n!|Z|_h^n}\Gamma_b\left(\mathcal{V}+n+\frac{1}{2}\right)\right|_h \rightarrow 0,\; \text{as}\;|Z|_h\rightarrow \infty.
\end{align*}
Therefore,
\begin{align*}
    \mathcal{J}_\mathcal{V}(Z)&\thicksim\frac{\exp((1-i)|Z|_h)}{2^\mathcal{V}|Z|_h^{\frac{1}{2}}\Gamma_b(\mathcal{V}+1)}\frac{\Gamma_b(2\mathcal{V}+1)}{\left(\Gamma_b(\mathcal{V}+\frac{1}{2})\right)^2}\sum_{k=0}^{n-1}\frac{\left(-\mathcal{V}+\frac{1}{2}\right)_k}{k!|Z|_h^k}\Gamma_b(\mathcal{V}+k+\frac{1}{2}),\;\text{as}\;|Z|_h\rightarrow \infty.
\end{align*}
\end{proof}
\begin{theo}\label{th:w4}
     If $Z=z_1e_1+z_2e_2, \mathcal{V}=\alpha_1 +j\alpha_2=\nu_1e_1+\nu_2e_2 \in{\mathbb{BC}}$ where $e_1=\frac{1+k}{2}, e_2=\frac{1-k}{2}$, then for a fixed $Z\notin\mathbb{O}_2$, bicomplex Bessel function $\mathcal{J}_\mathcal{V}(Z)$ is a holomorphic function of $\mathcal{V}$.
\end{theo}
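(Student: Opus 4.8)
The plan is to reduce bicomplex holomorphicity in $\mathcal{V}$ to the ordinary holomorphicity of the two idempotent components, and then to exploit the entireness of the classical Bessel function in its order. First I would record that, since $Z$ is fixed with $Z\notin\mathbb{O}_2$, its idempotent components satisfy $z_1\neq0$ and $z_2\neq0$ (indeed $z_1z_2=\lambda_1^2+\lambda_2^2$, which vanishes precisely on the zero divisors). By the idempotent representation \eqref{eq:fa} of Theorem~\ref{eq:s3},
\[
\mathcal{V}=\nu_1e_1+\nu_2e_2\ \longmapsto\ \mathcal{J}_\mathcal{V}(Z)=J_{\nu_1}(z_1)e_1+J_{\nu_2}(z_2)e_2,
\]
so the map $\mathcal{V}\mapsto\mathcal{J}_\mathcal{V}(Z)$ is presented in idempotent form with first component $F_1(\nu_1)=J_{\nu_1}(z_1)$ depending only on $\nu_1$ and second component $F_2(\nu_2)=J_{\nu_2}(z_2)$ depending only on $\nu_2$. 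The characterization of bicomplex holomorphicity through the Cauchy--Riemann equations \eqref{eq:i5} is equivalent, after the change of variables $(\lambda_1,\lambda_2)\leftrightarrow(z_1,z_2)$, to the statement that such a function is bicomplex holomorphic precisely when $F_1$ is a holomorphic function of $\nu_1$ and $F_2$ is a holomorphic function of $\nu_2$. Thus it suffices to show that, for a fixed nonzero complex $z$, the map $\nu\mapsto J_\nu(z)$ is entire.

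For the second step I would fix a branch of $\log(z/2)$ (legitimate since $z\neq0$) and split the series \eqref{eq:5} as
\[
J_\nu(z)=\left(\tfrac{z}{2}\right)^{\nu}\Phi_\nu(z),\qquad
\Phi_\nu(z)=\sum_{n=0}^{\infty}\frac{(-1)^n}{n!\,\Gamma(\nu+n+1)}\left(\frac{z^2}{4}\right)^{n}.
\]
The prefactor $\left(z/2\right)^{\nu}=\exp(\nu\log(z/2))$ is entire in $\nu$. Each term of $\Phi_\nu(z)$ is entire in $\nu$ because $1/\Gamma$ is an entire function and no branch of the logarithm intervenes in $(z^2/4)^n$. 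It then remains to prove that $\Phi_\nu(z)$ converges locally uniformly in $\nu$, for then Weierstrass's theorem on uniform limits of holomorphic functions makes $\Phi_\nu(z)$ entire, and the product of two entire functions is entire.

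The main obstacle is exactly this local uniform convergence, which requires a lower bound on $|\Gamma(\nu+n+1)|$ that is uniform over $\nu$ in a compact set $K$. I would take $M$ with $|\nu|\le M$ on $K$ and invoke Stirling's asymptotics: for $n$ large the argument $\nu+n+1$ has real part $\ge n+1-M$ and bounded imaginary part, whence $|\Gamma(\nu+n+1)|\ge c_K\,\Gamma(n+1-M)$ with $c_K>0$ independent of $n$ and of $\nu\in K$. Consequently the general term of $\Phi_\nu(z)$ is dominated on $K$ by $\frac{|z^2/4|^n}{c_K\,n!\,\Gamma(n+1-M)}$, a convergent majorant independent of $\nu$ (the ratio of consecutive terms tends to $0$); the Weierstrass $M$-test then delivers the uniform convergence. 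Combining this with the reduction of the first paragraph completes the proof, the role of the hypothesis $Z\notin\mathbb{O}_2$ being precisely to guarantee $z_1,z_2\neq0$, so that the factors $(z_i/2)^{\nu_i}$ are well defined and entire in $\nu_i$.
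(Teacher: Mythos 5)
Your proposal is correct, and it shares the paper's overall skeleton --- reduce everything to the idempotent components and use that each component $J_{\nu_i}(z_i)$ is holomorphic in its own order variable --- but the two arguments distribute the work quite differently. The paper writes $\mathcal{J}_\mathcal{V}(Z)=g_1(\alpha_1,\alpha_2)+jg_2(\alpha_1,\alpha_2)$ with $g_1=\tfrac12\left(J_{\nu_1}(z_1)+J_{\nu_2}(z_2)\right)$ and $g_2=\tfrac{i}{2}\left(J_{\nu_1}(z_1)-J_{\nu_2}(z_2)\right)$, then verifies the bicomplex Cauchy--Riemann equations \eqref{eq:i5} by explicitly computing the four partial derivatives with respect to $\alpha_1,\alpha_2$ (via $\nu_1=\alpha_1-i\alpha_2$, $\nu_2=\alpha_1+i\alpha_2$); you instead invoke the equivalent idempotent criterion directly, observing that the first component depends only on $\nu_1$ and the second only on $\nu_2$, which is legitimate and shorter. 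Conversely, the analytic heart of the matter --- that $\nu\mapsto J_\nu(z)$ is entire for fixed $z\neq0$ --- is merely asserted in the paper (it states that $g_1,g_2$ are analytic because $J_{\nu_1}(z_1)$ and $J_{\nu_2}(z_2)$ are complex Bessel functions), whereas you actually prove it: you split off the entire prefactor $(z/2)^{\nu}$ and establish locally uniform convergence of the remaining series from a lower bound of the form $|\Gamma(\nu+n+1)|\geq c_K\,\Gamma(n+1-M)$ on compacta, which is valid here because the imaginary part of the argument stays bounded while the real part tends to $+\infty$. You also make explicit the role of the hypothesis $Z\notin\mathcal{O}_2$, namely that $z_1z_2=\lambda_1^2+\lambda_2^2\neq0$ forces $z_1,z_2\neq0$, so that $(z_i/2)^{\nu_i}$ is well defined and entire in $\nu_i$ --- a point the paper leaves implicit. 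In short, your route supplies a self-contained justification of the one step the paper outsources, at the cost of a slightly longer argument; both are sound.
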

\begin{proof}
    Based on the idempotent representation, the bicomplex Bessel function is expressed as,
\begin{align}\label{eq:18}
    \mathcal{J}_\mathcal{V}(Z)= J_{\nu_1}(z_1)e_1+ J_{\nu_2}(z_2)e_2.
\end{align}
Setting $e_1=\frac{1+ij}{2}$ and $e_2=\frac{1-ij}{2}$ in \eqref{eq:18}, we get
\begin{align}\label{eq:19}
      \mathcal{J}_\mathcal{V}(Z)&= J_{\nu_1}(z_1)\left(\frac{1+ij}{2}\right)+ J_{\nu_2}(z_2)\left(\frac{1-ij}{2}\right)\nonumber\\
     &=\frac{1}{2}( J_{\nu_1}(z_1)+J_{\nu_2}(z_2))+\frac{ij}{2}(J_{\nu_1}(z_1)-J_{\nu_2}(z_2)).
\end{align}
Let us consider $  \mathcal{J}_\mathcal{V}(Z)=g_1(\alpha_1,\alpha_2)+jg_2(\alpha_1,\alpha_2)$. Then from the equation \eqref{eq:19}, we obtain
\begin{align*}
    &g_1(\alpha_1,\alpha_2)=\frac{1}{2}\left( J_{\nu_1}(z_1)+J_{\nu_2}(z_2)\right)\\
   &g_2(\alpha_1,\alpha_2)= \frac{i}{2}\left(J_{\nu_1}(z_1)-J_{\nu_2}(z_2)\right).
\end{align*}
Now,
\begin{align*}
   & \frac{\partial g_1}{\partial \alpha_1}=\frac{1}{2}( J'_{\nu_1}(z_1)+J'_{\nu_2}(z_2)) \\
    & \frac{\partial g_1}{\partial \alpha_2}=\frac{-i}{2}\left(J'_{\nu_1}(z_1)\right) + \frac{i}{2}\left(J'_{\nu_2}(z_2)\right)\\
    &\frac{\partial g_2}{\partial \alpha_1}=\frac{i}{2}\left(J'_{\nu_1}(z_1)\right) - \frac{i}{2}\left(J'_{\nu_2}(z_2)\right)\\
    &\frac{\partial g_2}{\partial \alpha_2}=\frac{i}{2}\left(J'_{\nu_1}(z_1)\right)\cdot(-i) - \frac{i}{2}\left(J'_{\nu_2}(z_2)\right)\cdot i\\
    &\hspace{20pt}= \frac{1}{2}\left(J'_{\nu_1}(z_1)+J'_{\nu_2}(z_2)\right)
\end{align*}
from the above equation, we get
\begin{align*}
    \frac{\partial g_1}{\partial \alpha_1}= \frac{\partial g_2}{\partial \alpha_2} \quad
      and \quad  \frac{\partial g_1}{\partial \alpha_2}=- \frac{\partial g_2}{\partial \alpha_1}.
\end{align*}
Therefore, $g_1$ and $g_2$ satisfies bicomplex Cauchy-Riemann equation. Again, $J_{\nu_1}(z_1)$ and $J_{\nu_2}(z_2)$ are complex Bessel function , then for a fixed $Z\notin\mathbb{O}_2$, $g_1$ and $g_2$ are analytic functions of $\alpha_1$ and $\alpha_2$. From the analyticity condition of the function of bicomplex variable \eqref{eq:i5} we obtain, bicomplex Bessel function $ \mathcal{J}_\mathcal{V}(Z)$ is a holomorphic function of $\mathcal{V}$.
\end{proof}
\begin{theo}
    Assume that $Z=z_1e_1+z_2e_2, \mathcal{V}=\alpha_1 +j\alpha_2=\nu_1e_1+\nu_2e_2 \in{\mathbb{BC}}$, with satisfies the conditions $\alpha_1=\frac{l_1+l_2}{2}$ and $\alpha_2=\frac{l_2-l_1}{2i}$ where $l_1,l_2\in \mathbb{N}\cup \{0\}$, then bicomplex Bessel function $ \mathcal{J}_\mathcal{V}(Z)$ is a holomorphic function of $Z$ in the whole bicomplex space.
\end{theo}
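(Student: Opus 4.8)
The plan is to reduce the claim to the holomorphicity of ordinary Bessel functions of non-negative integer order and then verify the bicomplex Cauchy--Riemann equations \eqref{eq:i5}, mirroring the proof of Theorem~\ref{th:w4} but now treating $Z$, rather than $\mathcal{V}$, as the variable. First I would decode the hypotheses on $\alpha_1,\alpha_2$ into statements about $\nu_1,\nu_2$. Since the idempotent projections of $\mathcal{V}=\alpha_1+j\alpha_2$ are $\nu_1=\alpha_1-i\alpha_2$ and $\nu_2=\alpha_1+i\alpha_2$, substituting $\alpha_1=\frac{l_1+l_2}{2}$ and $\alpha_2=\frac{l_2-l_1}{2i}$ yields $\nu_1=l_1$ and $\nu_2=l_2$, both in $\mathbb{N}\cup\{0\}$. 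This is the only place the hypothesis is used, and it is exactly what is needed: by \eqref{eq:fa} we have $\mathcal{J}_\mathcal{V}(Z)=J_{l_1}(z_1)e_1+J_{l_2}(z_2)e_2$, and for non-negative integer order the factor $(z/2)^{\nu}$ in \eqref{eq:5} is a monomial, so the defining series contains only non-negative integer powers of $z$; hence $J_{l_1}$ and $J_{l_2}$ are entire functions of $z_1$ and $z_2$.

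Next, writing $e_1=\frac{1+ij}{2}$ and $e_2=\frac{1-ij}{2}$, I would separate $\mathcal{J}_\mathcal{V}(Z)=g_1+jg_2$ with
\[
g_1=\tfrac{1}{2}\bigl(J_{l_1}(z_1)+J_{l_2}(z_2)\bigr),\qquad g_2=\tfrac{i}{2}\bigl(J_{l_1}(z_1)-J_{l_2}(z_2)\bigr).
\]
Because $z_1=\lambda_1-i\lambda_2$ and $z_2=\lambda_1+i\lambda_2$ are entire functions of $(\lambda_1,\lambda_2)$ and $J_{l_1},J_{l_2}$ are entire, the functions $g_1$ and $g_2$ are holomorphic in $\lambda_1$ and $\lambda_2$ everywhere in $\mathbb{BC}$. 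It then remains to check \eqref{eq:i5}: differentiating through the chain rule, with $\partial z_1/\partial\lambda_1=\partial z_2/\partial\lambda_1=1$, $\partial z_1/\partial\lambda_2=-i$, $\partial z_2/\partial\lambda_2=i$, gives $\partial g_1/\partial\lambda_1=\partial g_2/\partial\lambda_2=\frac{1}{2}(J'_{l_1}(z_1)+J'_{l_2}(z_2))$ and $\partial g_1/\partial\lambda_2=-\partial g_2/\partial\lambda_1=-\frac{i}{2}(J'_{l_1}(z_1)-J'_{l_2}(z_2))$, so both Cauchy--Riemann equations hold on all of $\mathbb{BC}$, and the conclusion follows.

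The substantive point, rather than the routine computation, is the role played by the hypothesis. The only possible obstruction to holomorphicity of $\mathcal{J}_\mathcal{V}$ in $Z$ is the branch point at the origin coming from $Z^{\mathcal{V}}$, equivalently from $z_1^{\nu_1}$ and $z_2^{\nu_2}$, when the orders are not non-negative integers; the conditions $\alpha_1=\frac{l_1+l_2}{2}$ and $\alpha_2=\frac{l_2-l_1}{2i}$ are precisely what force $\nu_1,\nu_2\in\mathbb{N}\cup\{0\}$ and thereby remove this branch point, leaving a function that is entire on each idempotent slice and hence bicomplex holomorphic throughout the whole bicomplex space. I therefore expect the interpretation of the hypothesis as an integrality condition to be the crux, with the Cauchy--Riemann verification being a direct transcription of the argument already used in Theorem~\ref{th:w4}.
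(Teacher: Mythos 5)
Your proposal is correct and follows exactly the route the paper intends: the paper's own proof is a one-line remark that the argument mirrors Theorem~\ref{th:w4}, and you have faithfully carried out that mirroring — decoding the hypothesis to $\nu_1=l_1$, $\nu_2=l_2\in\mathbb{N}\cup\{0\}$ so that $J_{l_1},J_{l_2}$ are entire, then verifying the bicomplex Cauchy--Riemann equations \eqref{eq:i5} in the components of $Z$. Your added observation that the integrality condition is precisely what removes the branch point of $Z^{\mathcal{V}}$ at the origin is a correct and useful clarification of why the hypothesis is needed, though it goes beyond what the paper states.
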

\begin{proof}
    The proof of this theorem follows a similar approach as used in Theorem \ref{th:w4}.
\end{proof}
\section{The bicomplex testing spaces and n-dimensional bicomplex Hankel transformation}
In 1966, A. H. Zemanian \cite{zehankel} has studied $\mu$th order Hankel transformation in the test function space $H_\mu$. The $\mu$th order Hankel transformation is defined by
\begin{align}\label{eq:w6}
    h_\mu\psi(y)=\int_{0}^{\infty}\psi(x)\sqrt{xy}J_{\mu}(xy)dx,
\end{align}
where $\psi\in H_\mu$ and $J_{\mu}$ designates the Bessel function of order $\mu$. For $\mu\geq-\frac{1}{2}$ the $\mu$th order Hankel transforms is an automorphism on the space $H_\mu$. Furthermore E. L. Koh \cite{hankel} generalized the $\mu$th order Hankel transform \eqref{eq:w6} in $n$-dimensional is defined by
    $$\mathfrak{h}_\mu\psi(y_1,...,y_n)=\int_{0}^{\infty}\ldots\int_{0}^{\infty}\psi(x_1,...,x_n)\prod_{i=1}^{n}\sqrt{x_iy_i}J_\mu(x_iy_i)dx_1\ldots dx_n.$$
  In this section, we define $n$-dimensional bicomplex Hankel transformation by using bicomplex Bessel function and establish some properties on the bicomplex testing space $\mathcal{A}_{\sigma,\varepsilon}$. We use the following notations $E=\{\omega\in\mathbb{R}^n:0<\omega_i<\infty,i=1,2,...,n\}$ where $\omega=(\omega_1,\omega_2,...\omega_n)$ and $[\omega]$ refers to the product $\omega_1\omega_2\ldots\omega_n$. Similarly, $[\omega^p]=\omega_1^{p_1}\omega^{p_2}\ldots\omega^{p_n}$, where $p=(p_1,p_2,...,p_n)$. We use the symbols $(p)=p_1+p_2+\ldots+p_n$,
 \begin{align*}
     D_\omega^{(p)}=\frac{\partial^{(p)}}{\partial\omega_1^{p_1}\partial\omega_2^{p_2}\ldots\partial\omega_n^{p_n}}\;\;\mbox{and} \;\;(\omega^{-1}D_\omega)^p=\prod_{l=1}^{n}\left(\frac{1}{\omega_l}\frac{\partial}{\partial\omega_l}\right)^{p_l},
 \end{align*}
 where $p=(p_1,p_2,...,p_n)$ is a nonnegative integer in $\mathbb{R}^n$. Let $\varepsilon$ be a positive real number, now we define bicomplex testing function space $\mathcal{A}_{\sigma,\varepsilon}$ for every fixed hyperbolic number $\sigma=\sigma_1e_1+\sigma_2e_2\in\mathbb{BC}$ as follows,
the space of bicomplex valued functions $\zeta(\omega)$ which is defined and smooth on $E$, $\zeta(\omega)=0$ on $E_1=\{\omega\in\mathbb{R}^n:\varepsilon<\omega_i<\infty,i=1,2,...,n\}$ and for each nonnegative integers $p$ in $\mathbb{R}^n$
\begin{align*}
    \mathcal{T}^{\sigma,\varepsilon}_{p}(\zeta(\omega))=\sup\limits_{E}\left|(\omega^{-1}D_\omega)^p[\omega]^{-\sigma-\frac{1}{2}}\zeta(\omega)\right|_h<_h\infty.
\end{align*}
Clearly, $\mathcal{A}_{\sigma,\varepsilon}$ is a vector space over the field of complex numbers.
\begin{theo}
    Suppose that $\lambda=s_1+js_2=\lambda_1e_1+\lambda_2e_2,e_1=\frac{1+k}{2}, e_2=\frac{1-k}{2}\in\mathbb{BC}$ with satisfies $s_1=n_1+n_2$ and $s_2=i(n_1-n_2)$ where $n_1,n_2\in \mathbb{N}$, then $\mathcal{A}_{\sigma+\lambda,\varepsilon}\subset \mathcal{A}_{\sigma,\varepsilon}$.
\end{theo}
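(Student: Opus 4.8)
\emph{Plan of the proof.}
The two spaces $\mathcal{A}_{\sigma+\lambda,\varepsilon}$ and $\mathcal{A}_{\sigma,\varepsilon}$ impose \emph{identical} smoothness and vanishing requirements (both demand that $\zeta$ be smooth on $E$ and vanish on $E_1$); they differ only through the weight $[\omega]^{-\sigma-\frac12}$ versus $[\omega]^{-(\sigma+\lambda)-\frac12}$. So I would fix $\zeta\in\mathcal{A}_{\sigma+\lambda,\varepsilon}$ and reduce the inclusion to showing that every seminorm $\mathcal{T}^{\sigma,\varepsilon}_{p}(\zeta)$ is hyperbolically finite; the smoothness and support conditions are then inherited verbatim, giving $\zeta\in\mathcal{A}_{\sigma,\varepsilon}$.

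First I would unpack the hypothesis on $\lambda$. Writing $\lambda=s_1+js_2$ in idempotent form and substituting $s_1=n_1+n_2$, $s_2=i(n_1-n_2)$ gives $\lambda=2n_1 e_1+2n_2 e_2$, so the idempotent components $\lambda_1=2n_1$ and $\lambda_2=2n_2$ are non-negative integers. I would then use the factorization $[\omega]^{-\sigma-\frac12}\zeta=[\omega]^{\lambda}\,\Phi$, where $\Phi:=[\omega]^{-(\sigma+\lambda)-\frac12}\zeta$, and observe that $\zeta\in\mathcal{A}_{\sigma+\lambda,\varepsilon}$ says precisely that $\sup_{E}\left|(\omega^{-1}D_\omega)^{r}\Phi\right|_h<_h\infty$ for every multi-index $r$.

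The engine of the argument is that each operator $\omega_l^{-1}\partial_{\omega_l}$ is a \emph{derivation}: a one-line computation gives $\omega_l^{-1}\partial_{\omega_l}(fg)=(\omega_l^{-1}\partial_{\omega_l}f)g+f(\omega_l^{-1}\partial_{\omega_l}g)$, and the operators for distinct $l$ commute. Hence $(\omega^{-1}D_\omega)^{p}$ obeys the multi-index Leibniz rule, and I would expand $(\omega^{-1}D_\omega)^{p}\big([\omega]^{\lambda}\Phi\big)=\sum_{q\le p}\binom{p}{q}\big[(\omega^{-1}D_\omega)^{q}[\omega]^{\lambda}\big]\big[(\omega^{-1}D_\omega)^{p-q}\Phi\big]$, with $\binom{p}{q}=\prod_l\binom{p_l}{q_l}$. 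Working componentwise, $\omega_l^{-1}\partial_{\omega_l}$ sends $\omega_l^{m}$ to $m\,\omega_l^{m-2}$, so $(\omega^{-1}D_\omega)^{q}[\omega]^{\lambda}$ is, in each idempotent slot, a constant times the monomial $\prod_l\omega_l^{\lambda_i-2q_l}$. Here the integrality $\lambda_1,\lambda_2\in\mathbb{N}$ is decisive: the descending-factorial coefficients vanish as soon as $q_l$ exceeds $n_1$ (respectively $n_2$), so only finitely many $q$ survive, and each surviving monomial carries \emph{non-negative} exponents — there are no negative powers of $\omega_l$ to blow up as $\omega_l\to0^+$.

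Finally I would invoke the support condition: since $\zeta$, hence $\Phi$ and all its $(\omega^{-1}D_\omega)$-derivatives, vanishes outside the region prescribed by $\varepsilon$, the product $[\omega]=\omega_1\cdots\omega_n$ — and therefore every surviving monomial factor $\prod_l\omega_l^{\lambda_i-2q_l}$ — stays bounded on the support of $\zeta$ by a constant depending only on $\varepsilon$ and $q$. Combining this bound with the hypothesised finiteness of $\sup_{E}\left|(\omega^{-1}D_\omega)^{p-q}\Phi\right|_h$ and summing the finitely many nonzero Leibniz terms yields $\mathcal{T}^{\sigma,\varepsilon}_{p}(\zeta)<_h\infty$ for every $p$, which completes the inclusion. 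I expect the main obstacle to be the bookkeeping that legitimises the Leibniz expansion in the hyperbolic-valued setting — verifying the derivation property and the truncation of the derivatives of $[\omega]^{\lambda}$ in each idempotent component — together with making precise that the vanishing condition is exactly what tames the otherwise unbounded monomial weights on the unbounded set $E$.
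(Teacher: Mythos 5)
Your proposal is correct and follows essentially the same route as the paper: both arguments rest on the Leibniz rule for the derivation $\omega_l^{-1}\partial_{\omega_l}$, on the fact that the idempotent components $\lambda_1=2n_1$, $\lambda_2=2n_2$ are even non-negative integers (so the derivatives of the monomial factor terminate and leave only non-negative powers of $\omega$), and on the support condition to bound those powers by powers of $\varepsilon$. The only difference is organizational: the paper peels off one factor of $[\omega]^{2}$ at a time, reducing $\mathcal{T}^{\sigma,\varepsilon}_{p}$ to $(\sigma+2)$-level seminorms and iterating (writing out explicitly only the case $\lambda=2e_1+4e_2$), whereas you expand $(\omega^{-1}D_\omega)^p\big([\omega]^{\lambda}\Phi\big)$ in a single multi-index Leibniz step, which treats general $\lambda$ at once.
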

\begin{proof}
    Let a bicomplex valued function $\zeta(\omega)=\zeta_1(\omega)e_1+\zeta_2(\omega)e_2\in\mathcal{A}_{\sigma+\lambda,\varepsilon}$, then
    \begin{align}\label{eq:v9}
       &\mathcal{T}^{\sigma+\lambda,\varepsilon}_{p}(\zeta(\omega))=\sup\limits_{E}\left|(\omega^{-1}D_\omega)^p[\omega]^{-\sigma-\lambda-\frac{1}{2}}\zeta(\omega)\right|_h<_h\infty\nonumber\\
       &\implies \sup\limits_{E}\left|(\omega^{-1}D_\omega)^p[\omega]^{-\sigma_1-\lambda_1-\frac{1}{2}}\zeta_1(\omega)\right|e_1+\sup\limits_{E}\left|(\omega^{-1}D_\omega)^p[\omega]^{-\sigma_2-\lambda_2-\frac{1}{2}}\zeta_2(\omega)\right|e_2<_h\infty.
    \end{align}
    Let us take $\mathcal{K}_{1,p}^{\sigma_1+\lambda_1,\varepsilon}(\zeta_1)=\sup\limits_{E}\left|(\omega^{-1}D_\omega)^p[\omega]^{-\sigma_1-\lambda_1-\frac{1}{2}}\zeta_1(\omega)\right|$ and \\$\mathcal{K}_{2,p}^{\sigma_2+\lambda_2,\varepsilon}(\zeta_2)=\sup\limits_{E}\left|(\omega^{-1}D_\omega)^p[\omega]^{-\sigma_2-\lambda_2-\frac{1}{2}}\zeta_2(\omega)\right|$, then from \eqref{eq:v9}, we have $\mathcal{K}_{1,p}^{\sigma_1+\lambda_1,\varepsilon}<\infty$ and $\mathcal{K}_{2,p}^{\sigma_2+\lambda_2,\varepsilon}<\infty$.
    Now, it is easy to show that
    \begin{align}\label{eq:f74}
        (\omega^{-1}D_\omega)^p[\omega]^{-\sigma_1-\frac{1}{2}}\zeta_1(\omega)&=\prod_{k=1}^{n}\left(1+\frac{\omega_k}{2p_k}\frac{\partial}{\partial\omega_k}\right)\left[2p_k\left(\frac{1}{\omega_k}\frac{\partial}{\partial\omega_k}\right)^{p_k-1}\omega_k^{-\sigma_1-\frac{5}{2}}\zeta_1(\omega)\right]\nonumber\\
        &=\prod_{k=1}^{n}\left(1+\frac{\omega_k^2}{2p_k}\frac{1}{\omega_k}\frac{\partial}{\partial\omega_k}\right)\left[2p_k\left(\frac{1}{\omega_k}\frac{\partial}{\partial\omega_k}\right)^{p_k-1}\omega_k^{-\sigma_1-\frac{5}{2}}\zeta_1(\omega)\right].
    \end{align}
    Since  $\zeta(\omega)=0$ on $E_1=\{\omega\in\mathbb{R}^n:\varepsilon<\omega_i<\infty,i=1,2,...,n\}$ and it implies that $\zeta_1(\omega)=0$ on $E_1$. After simplifying the right-hand side of \eqref{eq:f74} , we get to the sum of $2^n$ components
    \begin{align}\label{eq:9d}
    \mathcal{K}_{1,p}^{\sigma_1,\varepsilon}(\zeta_1)\leq 2^n[p]\mathcal{K}_{1,p-n}^{\sigma_1+2,\varepsilon}(\zeta_1)+c_2\mathcal{K}_{1,p-n+1}^{\sigma_1+2,\varepsilon}(\zeta_1)+\ldots+\epsilon^{2n}\mathcal{K}_{1,p}^{\sigma_1+2,\varepsilon}(\zeta_1)<\infty\end{align}
    Again,using the similar procedure we obtain
    \begin{align}\label{eq:c8}
    \mathcal{K}_{2,p}^{\sigma_2,\varepsilon}(\zeta_2)&\leq 2^n[p]\mathcal{K}_{2,p-n}^{\sigma_2+2,\varepsilon}(\zeta_2)+c_2\mathcal{K}_{2,p-n+1}^{\sigma_2+2,\varepsilon}(\zeta_2)+\ldots+\epsilon^{2n}\mathcal{K}_{2,p}^{\sigma_2+2,\varepsilon}(\zeta_2)\nonumber\\&\leq d_1\mathcal{K}_{2,p-2n}^{\sigma_2+4,\varepsilon}(\zeta_2)+d_2\mathcal{K}_{2,p-2n+1}^{\sigma_2+4,\varepsilon}(\zeta_2)+\ldots+d_k\mathcal{K}_{2,p+n}^{\sigma_2+4,\varepsilon}(\zeta_2)<\infty,
    \end{align}
    where $c_1,c_2,...,c_l,d_1,d_2,...,d_k$ are constants. If we take $\lambda_1=2$ and $\lambda_2=4$, the by using \eqref{eq:9d} and \eqref{eq:c8}, we have
    \begin{align*}
      \mathcal{T}^{\sigma,\varepsilon}_{p}(\zeta(\omega))&= \mathcal{K}_{1,p}^{\sigma_1,\varepsilon}(\zeta_1)e_1+ \mathcal{K}_{2,p}^{\sigma_2,\varepsilon}(\zeta_2)e_2\\
      &<_h \infty.
    \end{align*}
    Therefore, $\mathcal{A}_{\sigma+2e_1+4e_2,\varepsilon}\subset \mathcal{A}_{\sigma,\varepsilon}$. Using the similar procedure we can easily prove that $\mathcal{A}_{\sigma+\lambda,\varepsilon}\subset \mathcal{A}_{\sigma,\varepsilon}$.
\end{proof}
 Let us consider $\rho$ be a positive hyperbolic number, $\Omega$ be an open subset of $\mathbb{BC}^n$ and $\mathcal{Z}=(Z_1,Z_2,...,Z_n)\in\Omega$. We define another testing space $\mathcal{C}^\mathcal{V}_\rho$ as follows, a bicomplex valued infinitely differentiable function $\Psi(\mathcal{Z})\in \mathcal{C}^\mathcal{V}_\rho$ if and only if for each nonnegative integer $m$
 \begin{align*}
 \mathcal{G}^\mathcal{V}_{\rho,m}(\Psi(\mathcal{Z}))=\sup_{\mathcal{Z}}\left|\exp\left(-\rho\sum_{k=1}^{n}|Z_k|_h\right)[\mathcal{Z}]^{2m-\mathcal{V}-\frac{1}{2}}\Psi(\mathcal{Z})\right|_h<_h\infty.
 \end{align*}
 It is evident that $\mathcal{C}^\mathcal{V}_\rho$
  forms a vector space over the complex field.
 \begin{theo}
     If $a$ and $b$ are two positive hyperbolic numbers with satisfies the condition $a>_hb$, then $\mathcal{C}^\mathcal{V}_b\subset\mathcal{C}^\mathcal{V}_a$.
 \end{theo}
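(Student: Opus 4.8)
The plan is to show that each seminorm $\mathcal{G}^{\mathcal{V}}_{a,m}$ is dominated, in the hyperbolic order, by the corresponding seminorm $\mathcal{G}^{\mathcal{V}}_{b,m}$, so that finiteness of the latter forces finiteness of the former. First I would fix an arbitrary $\Psi\in\mathcal{C}^{\mathcal{V}}_b$; note that $\Psi$ is infinitely differentiable, a property independent of the weight parameter, so this regularity is automatically inherited. Then I would factor the exponential weight as
\[
\exp\left(-a\sum_{k=1}^{n}|Z_k|_h\right)=\exp\left(-(a-b)\sum_{k=1}^{n}|Z_k|_h\right)\exp\left(-b\sum_{k=1}^{n}|Z_k|_h\right),
\]
which is legitimate since multiplication in $\mathbb{BC}$ is commutative and the exponential splits across idempotent components.

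Next I would pass to the idempotent representation. Writing $a=a_1e_1+a_2e_2$, $b=b_1e_1+b_2e_2$ and $Z_k=z_{k,1}e_1+z_{k,2}e_2$, the hypothesis $a>_hb$ gives $a-b\in\mathcal{D}^+$, i.e. $a_1-b_1\geq0$ and $a_2-b_2\geq0$. Since $\sum_k|Z_k|_h=\left(\sum_k|z_{k,1}|\right)e_1+\left(\sum_k|z_{k,2}|\right)e_2\in\mathcal{D}^+$, the product $(a-b)\sum_k|Z_k|_h$ again lies in $\mathcal{D}^+$, so each idempotent component of $\exp\left(-(a-b)\sum_k|Z_k|_h\right)$ is a real number in $(0,1]$. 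Consequently its hyperbolic norm satisfies $\left|\exp\left(-(a-b)\sum_k|Z_k|_h\right)\right|_h\leq_h1$.

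Then, invoking the multiplicative property $|ZW|_h=|Z|_h|W|_h$ of the hyperbolic norm, for every $\mathcal{Z}\in\Omega$ I would obtain the pointwise bound
\[
\left|\exp\left(-a\sum_k|Z_k|_h\right)[\mathcal{Z}]^{2m-\mathcal{V}-\frac{1}{2}}\Psi(\mathcal{Z})\right|_h\leq_h\left|\exp\left(-b\sum_k|Z_k|_h\right)[\mathcal{Z}]^{2m-\mathcal{V}-\frac{1}{2}}\Psi(\mathcal{Z})\right|_h.
\]
Because the hyperbolic supremum is computed componentwise, this inequality is preserved under $\sup_{\mathcal{Z}}$, yielding $\mathcal{G}^{\mathcal{V}}_{a,m}(\Psi)\leq_h\mathcal{G}^{\mathcal{V}}_{b,m}(\Psi)<_h\infty$ for each nonnegative integer $m$. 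Hence $\Psi\in\mathcal{C}^{\mathcal{V}}_a$, and the inclusion $\mathcal{C}^{\mathcal{V}}_b\subset\mathcal{C}^{\mathcal{V}}_a$ follows.

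The computation is short, so the only genuine point requiring care — and hence the step I would treat as the main obstacle — is the transition from the pointwise estimate to the estimate on the seminorms: one must verify that the partial order $<_h$ is respected when passing to the supremum, which rests on the fact that both $|\cdot|_h$ and the supremum are defined componentwise in the idempotent decomposition. Everything else reduces to the factorisation of the exponential and the elementary bound $\left|\exp\left(-(a-b)\sum_k|Z_k|_h\right)\right|_h\leq_h1$ coming from $a-b\in\mathcal{D}^+$.
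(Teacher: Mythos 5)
Your proposal is correct and follows essentially the same route as the paper: both arguments reduce to the componentwise observation that $a>_hb$ makes the weight $\exp\left(-a\sum_k|Z_k|_h\right)$ dominated by $\exp\left(-b\sum_k|Z_k|_h\right)$ in each idempotent component, so each seminorm $\mathcal{G}^{\mathcal{V}}_{a,m}$ is bounded by $\mathcal{G}^{\mathcal{V}}_{b,m}$ and the inclusion follows. Your factorisation of the exponential and the bound $\left|\exp\left(-(a-b)\sum_k|Z_k|_h\right)\right|_h\leq_h1$ is just a mild repackaging of the paper's direct componentwise comparison, and if anything is slightly more careful in only claiming the non-strict inequality that $a-b\in\mathcal{D}^+$ actually delivers.
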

 \begin{proof}
     Let us consider the idempotent form of bicomplex numbers  $a=a_1e_1+a_2e_2,b=b_1e_1+b_2e_2,\mathcal{V}=\nu_1e_1+\nu_2e_2$ and  $Z_i=z_{1i}e_1+z_{2i}e_2$ for $i=1,2,...,n$. Then $[\mathcal{Z}]=\prod\limits_{i=1}^{n}(z_{1i})e_1+\prod\limits_{i=1}^{n}(z_{2i})e_2$ and $\sum\limits_{k=1}^{n}|Z_k|_h=\sum\limits_{k=1}^{n}|z_{1k}|e_1+\sum\limits_{k=1}^{n}|z_{2k}|e_2$. Again from the condition $a>_hb$, we obtain $a_1>b_1$ and $a_2>b_2$. Now we consider $\Psi(\mathcal{Z})=\Psi_1(Z_1,Z_2,...,Z_n)e_1+\Psi_2(Z_1,Z_2,...,Z_n)e_2\in\mathcal{C}^\mathcal{V}_b$, then
     \begin{align*}
         \mathcal{G}^\mathcal{V}_{b,m}(\Psi(\mathcal{Z}))=\sup_{\mathcal{Z}}\left|\exp\left(-b\sum_{k=1}^{n}|Z_k|_h\right)[\mathcal{Z}]^{2m-\mathcal{V}-\frac{1}{2}}\Psi(\mathcal{Z})\right|_h<_h\infty.
     \end{align*}
     Since $a_1>b_1$ and $a_2>b_2$, therefore the inequalities
     \begin{align}\label{eq:d9}
         &\sup_{\mathcal{Z}}\left|\exp\left(-a_i\sum\limits_{k=1}^{n}|z_{ik}|\right)\left[\prod\limits_{k=1}^{n}|z_{ik}|\right]^{2m-\nu_i-\frac{1}{2}}\Psi_i(\mathcal{Z})\right|\nonumber\\&\hspace{10pt}< \sup_{\mathcal{Z}}\left|\exp\left(-b_i\sum\limits_{k=1}^{n}|z_{ik}|\right)\left[\prod\limits_{k=1}^{n}|z_{ik}|\right]^{2m-\nu_i-\frac{1}{2}}\Psi_i(\mathcal{Z})\right|,
     \end{align}
     holds for $i=1,2$. Using the inequalities \eqref{eq:d9}, we obtain
     \begin{align*}
          \mathcal{G}^\mathcal{V}_{a,m}(\Psi(\mathcal{Z}))&=\sup_{\mathcal{Z}}\left|\exp\left(-a\sum_{k=1}^{n}|Z_k|_h\right)[\mathcal{Z}]^{2m-\mathcal{V}-\frac{1}{2}}\Psi(\mathcal{Z})\right|\\
          &=\sum_{i=1}^{2}\sup_{\mathcal{Z}}\left|\exp\left(-a_i\sum\limits_{k=1}^{n}|z_{ik}|\right)\left[\prod\limits_{k=1}^{n}|z_{ik}|\right]^{2m-\nu_i-\frac{1}{2}}\Psi_i(\mathcal{Z})\right|e_i\\
          &<_h\sum_{i=1}^{2}\sup_{\mathcal{Z}}\left|\exp\left(-b_i\sum\limits_{k=1}^{n}|z_{ik}|\right)\left[\prod\limits_{k=1}^{n}|z_{ik}|\right]^{2m-\nu_i-\frac{1}{2}}\Psi_i(\mathcal{Z})\right|e_i\\
          &=\sup_{\mathcal{Z}}\left|\exp\left(-b\sum_{k=1}^{n}|Z_k|_h\right)[\mathcal{Z}]^{2m-\mathcal{V}-\frac{1}{2}}\Psi(\mathcal{Z})\right|\\
         &<_h\infty.
     \end{align*}
     Therefore, $\mathcal{C}^\mathcal{V}_b\subset\mathcal{C}^\mathcal{V}_a$. Hence, the proof is completed.
 \end{proof}
 We now define several bicomplex differential operators are as follows:
 \begin{align*}
     &\mathcal{N}_{k\sigma}=\sum_{l=1}^2\omega_k^{\sigma_l+\frac{1}{2}}\frac{\partial}{\partial\omega_k}\omega_k^{-\sigma_l-\frac{1}{2}}e_l=\omega_k^{\sigma+\frac{1}{2}}\frac{\partial}{\partial\omega_k}\omega_k^{-\sigma-\frac{1}{2}}\\
     &\mathcal{M}_{k\sigma}=\sum_{l=1}^2\omega_k^{-\sigma_l-\frac{1}{2}}\frac{\partial}{\partial\omega_k}\omega_k^{\sigma_l+\frac{1}{2}}e_l=\omega_k^{-\sigma-\frac{1}{2}}\frac{\partial}{\partial\omega_k}\omega_k^{\sigma+\frac{1}{2}}\\
     &\mathcal{N}_{\sigma}=\prod_{k=1}^{n}\mathcal{N}_{k\sigma}=[\omega]^{\sigma+\frac{1}{2}}\frac{\partial^n}{\partial\omega_1...\partial\omega_n}[\omega]^{-\sigma-\frac{1}{2}}\\
     &\mathcal{M}_{\sigma}=\prod_{k=1}^{n}\mathcal{M}_{k\sigma}=[\omega]^{-\sigma-\frac{1}{2}}\frac{\partial^n}{\partial\omega_1...\partial\omega_n}[\omega]^{\sigma+\frac{1}{2}},
 \end{align*}
 where $e_1=\frac{1+k}{2}$ and $e_2=\frac{1-k}{2}$.
 \begin{theo}
 Let $\sigma=\sigma_1e_1+\sigma_2e_2$ be hyperbolic number and $e_1=\frac{1+k}{2}, e_2=\frac{1-k}{2}\in\mathbb{BC}$, then
 \begin{description}
    \item[(i)] The operation $\zeta(\omega)\rightarrow \mathcal{N}_{\sigma}\zeta(\omega)$ is a continuous linear mapping of $\mathcal{A}_{\sigma,\varepsilon}$ into $\mathcal{A}_{\sigma+1,\varepsilon}$.
     \item [(ii)] The operation $\zeta(\omega)\rightarrow \mathcal{M}_{\sigma}\zeta(\omega)$ is a continuous linear mapping of $\mathcal{A}_{\sigma+1,\varepsilon}$ into $\mathcal{A}_{\sigma,\varepsilon}$.
\end{description}
 \end{theo}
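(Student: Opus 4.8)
\emph{Strategy.} Both maps are evidently linear, so the real content is (a) that the image lands in the stated target space and (b) a seminorm estimate giving continuity; by Lemma~\ref{lemma2} continuity is the same as $\mathcal D$-boundedness. My plan is to trivialize the weights by passing to the \emph{normalized} function of each $\zeta$. Writing $S_l=\frac{1}{\omega_l}\frac{\partial}{\partial\omega_l}$ so that $(\omega^{-1}D_\omega)^p=\prod_{l=1}^n S_l^{p_l}=:S^p$, the defining seminorm is $\mathcal T^{\sigma,\varepsilon}_p(\zeta)=\sup_E\big|S^p\big([\omega]^{-\sigma-\frac12}\zeta\big)\big|_h$. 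Thus if I set $\eta:=[\omega]^{-\sigma-\frac12}\zeta$ the entire analysis reduces to how the conjugated operators $\mathcal N_\sigma,\mathcal M_\sigma$ act on $\eta$ through the single building block $S$.

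\emph{Part (i).} Since $\mathcal N_\sigma=[\omega]^{\sigma+\frac12}\,\partial_{\omega_1}\!\cdots\partial_{\omega_n}\,[\omega]^{-\sigma-\frac12}$, the target weight $[\omega]^{-(\sigma+1)-\frac12}=[\omega]^{-\sigma-\frac32}$ telescopes with $[\omega]^{\sigma+\frac12}$ to give a clean identity:
\[
[\omega]^{-(\sigma+1)-\frac12}\,\mathcal N_\sigma\zeta=[\omega]^{-1}\partial_{\omega_1}\!\cdots\partial_{\omega_n}\big([\omega]^{-\sigma-\frac12}\zeta\big)=S^{\mathbf 1}\eta,\qquad \mathbf 1=(1,\dots,1),
\]
using $[\omega]^{-1}\partial_{\omega_1}\!\cdots\partial_{\omega_n}=\prod_l S_l=S^{\mathbf 1}$. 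Hence $\mathcal T^{\sigma+1,\varepsilon}_p(\mathcal N_\sigma\zeta)=\sup_E|S^{p+\mathbf 1}\eta|_h=\mathcal T^{\sigma,\varepsilon}_{p+\mathbf 1}(\zeta)$. This one equality settles the easy half: finiteness of all $\mathcal T^{\sigma,\varepsilon}_{p+\mathbf 1}(\zeta)$ gives $\mathcal N_\sigma\zeta\in\mathcal A_{\sigma+1,\varepsilon}$ (vanishing on $E_1$ and smoothness are inherited from $\zeta$), and the exact identity of seminorms is continuity with constant $1$.

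\emph{Part (ii).} Here I take $\eta:=[\omega]^{-(\sigma+1)-\frac12}\zeta$, the normalized function of the \emph{source} space $\mathcal A_{\sigma+1,\varepsilon}$. Since the $\mathcal M_{k\sigma}$ commute and act on disjoint variables, I would work one coordinate at a time; the single-variable reduction $\omega_k^{-\sigma-\frac12}\mathcal M_{k\sigma}\big(\omega_k^{\sigma+\frac32}\,\cdot\,\big)=(2\sigma+2)+\omega_k^2 S_k$ gives
\[
[\omega]^{-\sigma-\frac12}\mathcal M_\sigma\zeta=\prod_{k=1}^n\big((2\sigma+2)+\omega_k^2 S_k\big)\eta=:\xi .
\]
So the target seminorm is $\mathcal T^{\sigma,\varepsilon}_p(\mathcal M_\sigma\zeta)=\sup_E|S^p\xi|_h$, and I would estimate it by pushing $S^p=\prod_l S_l^{p_l}$ through the factors via the operator identity $S_k^{m}\omega_k^2=\omega_k^2 S_k^{m}+2m\,S_k^{m-1}$ (an easy induction from $S_k\omega_k^2=\omega_k^2 S_k+2$). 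Expanding, $S^p\xi$ becomes a finite $\mathcal D$-valued linear combination of terms $c\,[\omega^{2a}]\,S^{p'}\eta$ with $0\le a\le\mathbf 1$ and $p'\le p+\mathbf 1$ componentwise.

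\emph{Finishing and the main obstacle.} The combinatorial bookkeeping above is routine; the one indispensable input is the support restriction built into $\mathcal A_{\sigma+1,\varepsilon}$: because $\zeta$ (hence $\eta$ and each $S$-derivative of it) vanishes once the coordinates leave the region cut out by $E_1$, every surviving weight $\omega_k^2$ is bounded by $\varepsilon^2$ on the effective domain of the supremum. This turns each term into $\le_h c\,\varepsilon^{2(a)}\,\mathcal T^{\sigma+1,\varepsilon}_{p'}(\zeta)$, and summing yields $\mathcal T^{\sigma,\varepsilon}_p(\mathcal M_\sigma\zeta)\le_h\sum c'\,\mathcal T^{\sigma+1,\varepsilon}_{p'}(\zeta)<_h\infty$, which simultaneously proves $\mathcal M_\sigma\zeta\in\mathcal A_{\sigma,\varepsilon}$ and, each target seminorm being dominated by finitely many source seminorms, continuity of $\mathcal M_\sigma$ ($\mathcal D$-boundedness via Lemma~\ref{lemma2}). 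I expect the crux to be exactly this step: unlike $\mathcal N_\sigma$, the operator $\mathcal M_\sigma$ does \emph{not} reduce to a single clean seminorm identity, because the extra factors $\omega_k^2$ are not controlled by the $\mathcal T^{\sigma+1}$ family on their own and must be tamed by the $\varepsilon$-support bound; one must also take care that all hyperbolic inequalities $<_h$ are read componentwise in the idempotent decomposition, since the constants $(2\sigma+2)$ and the $\varepsilon$-powers are themselves hyperbolic.
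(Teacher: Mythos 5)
Your proposal is correct and follows essentially the same route as the paper: part (i) is settled by the exact seminorm identity $\mathcal T^{\sigma+1,\varepsilon}_{p}(\mathcal N_\sigma\zeta)=\mathcal T^{\sigma,\varepsilon}_{p+\mathbf 1}(\zeta)$ obtained from the telescoping weights, and part (ii) by the Leibniz expansion of $\partial_{\omega_1}\cdots\partial_{\omega_n}[\omega]^{2\sigma+2}\bigl([\omega]^{-\sigma-\frac32}\zeta\bigr)$ together with the commutation identity $(\omega^{-1}D_\omega)^p\,\omega_i\partial_{\omega_i}=\bigl(2p_i+\omega_i\partial_{\omega_i}\bigr)(\omega^{-1}D_\omega)^p$ and the support bound $\omega_i\le\varepsilon$ to absorb the extra $\omega_i^2$ factors. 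You correctly identify the same crux the paper relies on, so no further comparison is needed.
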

 \begin{proof}
    (i) Let us consider a bicomplex valued function  $\zeta(\omega)\in\mathcal{A}_{\sigma,\varepsilon}$. Here we have used the concept of Lemma \ref{lemma2}. Now,
     \begin{align*}
         \mathcal{T}^{\sigma+1,\varepsilon}_{p}(\mathcal{N}_{\sigma}\zeta(\omega))&=\sup\limits_{E}\left|(\omega^{-1}D_\omega)^p[\omega]^{-\sigma-1-\frac{1}{2}}\mathcal{N}_{\sigma}\zeta(\omega)\right|_h\\
         &=\sup\limits_{E}\left|(\omega^{-1}D_\omega)^p[\omega]^{-1}\frac{\partial^n}{\partial\omega_1...\partial\omega_n}[\omega]^{-\sigma-\frac{1}{2}}\zeta(\omega)\right|_h\\
        &=\sup\limits_{E}\left|(\omega^{-1}D_\omega)^{p+n}[\omega]^{-\sigma-\frac{1}{2}}\zeta(\omega)\right|_h\\
        &=\mathcal{T}^{\sigma,\varepsilon}_{p+n}(\zeta(\omega)).
        \end{align*}
    Therefore $\mathcal{N}_{\sigma}$ is continuous linear mapping from $\mathcal{A}_\sigma$ into $\mathcal{A}_{\sigma+1}$.\\
    (ii) Let $p=(p_1,...,p_n)$ be a nonnegative integer in $\mathbb{R}^n$ and $\zeta(\omega)\in\mathcal{A}_{\sigma+1,\varepsilon}$. Then
    \begin{align}\label{eq:z8}
         \mathcal{T}^{\sigma,\varepsilon}_{p}(\mathcal{M}_{\sigma}\zeta(\omega))&=\sup\limits_{E}\left|(\omega^{-1}D_\omega)^p[\omega]^{-\sigma-\frac{1}{2}}\mathcal{M}_{\sigma}\zeta(\omega)\right|_h\nonumber\\
         &=\sup\limits_{E}\left|(\omega^{-1}D_\omega)^p[\omega]^{-2\sigma-1}\frac{\partial^n}{\partial\omega_1...\partial\omega_n}[\omega]^{\sigma+\frac{1}{2}}\zeta(\omega)\right|_h\nonumber\\
         &=\sup\limits_{E}\left|(\omega^{-1}D_\omega)^p[\omega]^{-2\sigma-1}\frac{\partial^n}{\partial\omega_1...\partial\omega_n}[\omega]^{2\sigma+2}[\omega]^{-\sigma-\frac{3}{2}}\zeta(\omega)\right|_h.
         \end{align}
    It is clear to observe that
    \begin{align}\label{eq:6s}
        \frac{\partial^n}{\partial\omega_1...\partial\omega_n}[\omega]^{2\sigma+2}[\omega]^{-\sigma-\frac{3}{2}}\zeta(\omega)&=(2\sigma+2)^n[\omega]^{2\sigma+1}[\omega]^{-\sigma-\frac{3}{2}}\zeta(\omega)\nonumber\\&+(2\sigma+2)^{n-1}[\omega]^{2\sigma+1}\sum_{i=1}^{n}\omega_i\frac{\partial}{\partial\omega_i}[\omega]^{-\sigma-\frac{3}{2}}\zeta(\omega)\nonumber\\
        &+(2\sigma+2)^{n-2}[\omega]^{2\sigma+1}\sum_{i,j=1}^{n}\omega_i\omega_j\frac{\partial^2}{\partial\omega_i\partial\omega_j}[\omega]^{-\sigma-\frac{3}{2}}\zeta(\omega)\nonumber\\
        &+\ldots+[\omega]^{2\sigma+2}\frac{\partial^n}{\partial\omega_1...\partial\omega_n}[\omega]^{-\sigma-\frac{3}{2}}\zeta(\omega).
    \end{align}
    Now using \eqref{eq:6s}, we get
    \begin{align}\label{eq:3f}
        (\omega^{-1}D_\omega)^p[\omega]^{-2\sigma-1}\frac{\partial^n}{\partial\omega_1...\partial\omega_n}[\omega]^{2\sigma+2}[\omega]^{-\sigma-\frac{3}{2}}\zeta(\omega)&=(2\sigma+2)^n (\omega^{-1}D_\omega)^p[\omega]^{-\sigma-\frac{3}{2}}\zeta(\omega)\nonumber\\&+(2\sigma+2)^{n-1}\sum_{i=1}^{n} (\omega^{-1}D_\omega)^p\omega_i\frac{\partial}{\partial\omega_i}[\omega]^{-\sigma-\frac{3}{2}}\zeta(\omega)\nonumber\\
        &+\ldots+(\omega^{-1}D_\omega)^p[\omega]\frac{\partial^n}{\partial\omega_1...\partial\omega_n}[\omega]^{-\sigma-\frac{3}{2}}\zeta(\omega).
    \end{align}
    Again,
    \begin{align}\label{eq:6g}
       (\omega^{-1}D_\omega)^p\omega_i\frac{\partial}{\partial\omega_i}[\omega]^{-\sigma-\frac{3}{2}}\zeta(\omega)&=(\omega^{-1}D_\omega)^p\omega_i^2\frac{1}{\omega_i}\frac{\partial}{\partial\omega_i}[\omega]^{-\sigma-\frac{3}{2}}\zeta(\omega)\nonumber\\
       &=\left(2p_i+\omega_i\frac{\partial}{\partial\omega_i}\right)(\omega^{-1}D_\omega)^p[\omega]^{-\sigma-\frac{3}{2}}\zeta(\omega).
    \end{align}
    Using similar procedure we can easily prove
    \begin{align}\label{eq:9h}
        (\omega^{-1}D_\omega)^p[\omega]\frac{\partial^n}{\partial\omega_1...\partial\omega_n}[\omega]^{-\sigma-\frac{3}{2}}\zeta(\omega)=\prod_{i=1}^n\left(2p_i+\omega_i\frac{\partial}{\partial\omega_i}\right)(\omega^{-1}D_\omega)^p[\omega]^{-\sigma-\frac{3}{2}}\zeta(\omega)
    \end{align}
    Using \eqref{eq:z8},\eqref{eq:3f},\eqref{eq:6g} and \eqref{eq:9h}, we obtain
    \begin{align*}
        \mathcal{T}^{\sigma,\varepsilon}_{p}(\mathcal{M}_{\sigma}\zeta(\omega))&<_h(2\sigma+2)^n \mathcal{T}^{\sigma+1,\varepsilon}_{p}(\zeta(\omega))\nonumber\\&+(2\sigma+2)^{n-1}\sum_{i=1}^{n} \left(2p_i\mathcal{T}^{\sigma+1,\varepsilon}_{p}(\zeta(\omega))+\varepsilon^2\mathcal{T}^{\sigma+1,\varepsilon}_{p'}(\zeta(\omega))\right)\nonumber\\
        &+\ldots+\sum_{i=0}^{n}s_i\varepsilon^{2i}\mathcal{T}^{\sigma+1,\varepsilon}_{p'_i}(\zeta(\omega)),
    \end{align*}
    where $p',p'_i$ are nonnegative integer in $\mathbb{R}^n$ and $s_i$ are constants for $i=1,2,...,n$. Hence, $\mathcal{M}_{\sigma}$ is continuous linear mapping from   $\mathcal{A}_{\sigma+1,\varepsilon}$ into $\mathcal{A}_{\sigma,\varepsilon}$.
\end{proof}

 Let $\mathcal{V}=\alpha_1+j\alpha_2=\nu_1e_1+\nu_2e_2\in\mathbb{BC},\rho=\theta_1+j\theta_2=\rho_1e_1+\rho_2e_2$ be a positive hyperbolic number, $\varrho=\min\{\rho_1,\rho_2\}$ and $\mathcal{V}$ satisfies the condition $\Re(\alpha_1)+\frac{1}{2}\geq\Im(\alpha_2)$. We define $n$-dimensional bicomplex Hankel transformation $\mathscr{H}_\mathcal{V}$ is given by
 \begin{align}\label{eq:h11}
\eta(\mathcal{Z})=\mathscr{H}_\mathcal{V}(\zeta)=\int_{0}^{\infty}\ldots\int_{0}^{\infty}\zeta(\omega) \prod_{k=1}^{n}\sqrt{\omega_kZ_k}\;\mathcal{J}_\mathcal{V}\left(\omega_kZ_k\right) d\omega_1\ldots d\omega_n,
 \end{align}
 where, $\mathcal{Z}\in{\mathbb{S}}=\{(Z_1,...,Z_n):Z_k=z_{1k}e_1+z_{2k}e_2,|\Im(z_{1k})|<\varrho,|\Im(z_{2k})|<\varrho\;\mbox{and}\;z_{1k},z_{2k}\notin(-\infty,0]\}$ and $\zeta\in\mathcal{A}_{\rho}$.
 \begin{theo}
     Assume that $\varrho\in\mathbb{R}^+,\mathcal{V}=\alpha_1+j\alpha_2=\nu_1e_1+\nu_2e_2\in\mathbb{BC}$, $\rho=\rho_1e_1+\rho_2e_2$ be a positive hyperbolic number such that $\Re(\alpha_1)+\frac{1}{2}\geq|\Im(\alpha_2)|$ and $|\rho|_h<_h\varrho$. For $\mathcal{Z}\in\mathbb{S}$ and $0<\omega_k<\infty$,
     \begin{align}\label{eq:c96}
         \left|\exp\left(-\rho|Z_k|_h\right)(\omega_kZ_k)^{-\mathcal{V}}\mathcal{J}_{\mathcal{V}+2m}(\omega_kZ_k)\right|_h<_hA_k,\; \mbox{for}\; k=1,2,...,n.
     \end{align}
 \end{theo}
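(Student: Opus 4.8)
The plan is to collapse the bicomplex estimate into two independent complex ones through the idempotent decomposition, and then to bound each complex factor by splitting the range of its argument into a bounded part, handled by an entire power series, and an unbounded part, handled by the asymptotics of the ordinary Bessel function. Writing $Z_k=z_{1k}e_1+z_{2k}e_2$ and recalling from \eqref{eq:fa} that $\mathcal{V}=\nu_1e_1+\nu_2e_2$ with $\nu_1=\alpha_1-i\alpha_2$, $\nu_2=\alpha_1+i\alpha_2$, the idempotent form together with $\omega_k>0$ gives
\begin{align*}
\exp(-\rho|Z_k|_h)(\omega_kZ_k)^{-\mathcal{V}}\mathcal{J}_{\mathcal{V}+2m}(\omega_kZ_k)=\sum_{j=1}^{2}e^{-\rho_j|z_{jk}|}(\omega_k z_{jk})^{-\nu_j}J_{\nu_j+2m}(\omega_k z_{jk})\,e_j.
\end{align*}
Since $|\cdot|_h$ and the order $<_h$ act componentwise, it suffices to produce for $j=1,2$ a finite $A_{jk}$ with $\bigl|e^{-\rho_j|z_{jk}|}(\omega_k z_{jk})^{-\nu_j}J_{\nu_j+2m}(\omega_k z_{jk})\bigr|\le A_{jk}$ and then set $A_k=A_{1k}e_1+A_{2k}e_2\in\mathcal{D}^+$.

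First I would record what the hypotheses contribute in each component. From the relations above one has $\Re\nu_1+\tfrac12=\Re(\alpha_1)+\tfrac12+\Im(\alpha_2)$ and $\Re\nu_2+\tfrac12=\Re(\alpha_1)+\tfrac12-\Im(\alpha_2)$, so the assumption $\Re(\alpha_1)+\tfrac12\ge|\Im(\alpha_2)|$ is precisely $\Re\nu_j\ge-\tfrac12$ for both $j$, the classical admissibility range for Bessel and Hankel kernels. Moreover $\mathcal{Z}\in\mathbb{S}$ forces $|\Im z_{jk}|<\varrho$ and keeps $z_{jk}$ off the cut $(-\infty,0]$, so that $\arg(\omega_k z_{jk})=\arg z_{jk}\in(-\pi,\pi)$ and the non-integer power is single valued with $|(\omega_k z_{jk})^{-\nu_j}|=(\omega_k|z_{jk}|)^{-\Re\nu_j}e^{\Im\nu_j\arg z_{jk}}$, the angular factor being bounded by $e^{\pi|\Im\nu_j|}$.

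For the bounded part of the argument I would use that $w\mapsto w^{-\nu_j}J_{\nu_j+2m}(w)=\sum_{s\ge0}\frac{(-1)^s}{\Gamma(\nu_j+2m+s+1)s!}(w/2)^{2m+2s}$ is an entire (even) function of $w$, hence continuous and bounded on the disc $|w|\le1$; with $e^{-\rho_j|z_{jk}|}\le1$ this controls the range $\omega_k|z_{jk}|\le1$. For $\omega_k|z_{jk}|>1$ I would invoke the asymptotic behaviour of the complex Bessel function, equivalently the classical bound $|J_\mu(w)|\le C_\mu|w|^{-1/2}e^{|\Im w|}$ valid for $|w|\ge1$, $|\arg w|<\pi$, which with $w=\omega_k z_{jk}$ yields
\begin{align*}
\left|(\omega_k z_{jk})^{-\nu_j}J_{\nu_j+2m}(\omega_k z_{jk})\right|\le C\,(\omega_k|z_{jk}|)^{-\Re\nu_j-\frac12}\,e^{\pi|\Im\nu_j|}\,e^{\omega_k|\Im z_{jk}|}.
\end{align*}
Because $\Re\nu_j\ge-\tfrac12$ the exponent $-\Re\nu_j-\tfrac12$ is nonpositive, so the power of $\omega_k|z_{jk}|$ stays bounded on $\{\omega_k|z_{jk}|>1\}$, while $|\Im z_{jk}|<\varrho$ keeps $e^{\omega_k|\Im z_{jk}|}$ finite. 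Multiplying by $e^{-\rho_j|z_{jk}|}$, whose rate $\rho_j>0$ drives the product to $0$ as $|z_{jk}|\to\infty$, I conclude that each component is continuous on $\mathbb{S}$, bounded near the origin, and decaying at infinity, hence has a finite supremum $A_{jk}$ over $\mathcal{Z}\in\mathbb{S}$; recombining gives the asserted $<_h A_k$.

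The main obstacle is the large-argument regime, where one must simultaneously absorb the exponential growth $e^{|\Im(\omega_k z_{jk})|}=e^{\omega_k|\Im z_{jk}|}$ of the Bessel factor and the angular factor $e^{\Im\nu_j\arg z_{jk}}$ produced by the branch of the non-integer power, and show that they are dominated by the weight $e^{-\rho_j|z_{jk}|}$. This is exactly the point at which the three hypotheses cooperate: $\mathcal{Z}\in\mathbb{S}$ bounds $|\Im z_{jk}|$ by $\varrho$ and confines $\arg z_{jk}$, the condition $\Re(\alpha_1)+\tfrac12\ge|\Im(\alpha_2)|$ removes any positive power of $\omega_k|z_{jk}|$, and $|\rho|_h<_h\varrho$ fixes the admissible decay of the weight. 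It should be noted that the resulting supremum $A_{jk}$ is uniform over $\mathcal{Z}\in\mathbb{S}$ for each fixed $\omega_k$ (the bound carries a factor $e^{\omega_k\varrho}$ from the Bessel growth), which is precisely the form needed for the continuity and integrability arguments that follow; once these estimates are in place the remaining computations are routine and the two constants $A_{1k},A_{2k}$ assemble into $A_k\in\mathcal{D}^+$.
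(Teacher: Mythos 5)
Your proposal is correct and follows essentially the same route as the paper: idempotent decomposition into two complex components, the large-argument asymptotics of $J_\nu$ giving a factor $|w|^{-1/2}e^{|\Im w|}$, the translation of $\Re(\alpha_1)+\tfrac12\ge|\Im(\alpha_2)|$ into $\Re\nu_j\ge-\tfrac12$ to kill any positive power of the argument, and the weight $e^{-\rho_j|z_{jk}|}$ to absorb the exponential growth, with $A_k=A_{1k}e_1+A_{2k}e_2$. Your version is in fact slightly more careful than the paper's, which invokes only the asymptotic regime $|z_{ik}|>1$ and does not separately treat the bounded-argument region or track the $\omega_k$-dependence of the constant.
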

 \begin{proof}
From Theorem \ref{eq:s3}, we see that $(\omega_kZ_k)^{-\mathcal{V}}\mathcal{J}_{\mathcal{V}+2m}(\omega_kZ_k)$ is absolutely hyperbolic convergent in $\mathbb{BC}$ and the idempotent representation is given by
\begin{align}\label{eq:9e}
    (\omega_kZ_k)^{-\mathcal{V}}\mathcal{J}_{\mathcal{V}+2m}(\omega_kZ_k)=(\omega_kz_{1k})^{-\nu_1}J_{\nu_1+2m}(\omega_kz_{1k})e_1+(\omega_kz_{2k})^{-\nu_2}J_{\nu_2+2m}(\omega_kz_{2k})e_2.
\end{align}
Now applying the asymptotic formula of Bessel function as $|z|\rightarrow\infty$,
\begin{align*}
    J_{\nu}(z)\thicksim \frac{1}{\sqrt{2\pi z}}\left(e^{i(z-\frac{\nu\pi}{2}-\frac{\pi}{4})}+e^{-i(z-\frac{\nu\pi}{2}-\frac{\pi}{4})}\right),\;\;-\pi<\arg z\leq\pi,
\end{align*}
in \eqref{eq:9e}, we obtain the inequalities
\begin{align*}
    \left|(\omega_kz_{ik})^{-\nu_i}J_{\nu_i+2m}(\omega_kz_{ik})\right|&<A_{ik}|z_{ik}|^{-\nu_i-\frac{1}{2}}\left(e^{-\Im(z_{ik})+\frac{\Im(\nu_i)\pi}{2}}+e^{\Im(z_{ik})-\frac{\Im(\nu_i)\pi}{2}}\right)
\end{align*}
holds for $i=1,2$ where $A_{ik}$ are constants.
Since, $\mathcal{V}$ satisfies the inequalities  $\Re(\alpha_1)+\frac{1}{2}\geq|\Im(\alpha_2)|$ and it is implies that $\Re(\nu_1)\geq-\frac{1}{2}$ and $\Re(\nu_2)\geq-\frac{1}{2}$, then for $|z_{ik}|>1$ and $\mathcal{Z}\in\mathbb{S}$, we get
\begin{align}\label{eq:s5}
    \left|\exp\left(-\rho_i|z_{ik}|\right)(\omega_kz_{ik})^{-\nu_i}J_{\nu_i+2m}(\omega_kz_{ik})\right|<A_{ik},\;\mbox{for}\; i=1,2.
\end{align}
Using \eqref{eq:9e} and \eqref{eq:s5}, we have
\begin{align*}
    \left|\exp\left(-\rho|Z_k|_h\right)(\omega_kZ_k)^{-\mathcal{V}}\mathcal{J}_{\mathcal{V}+2m}(\omega_kZ_k)\right|_h&=\left|\exp\left(-\rho_1|z_{1k}|\right)(\omega_kz_{1k})^{-\nu_1}J_{\nu_1+2m}(\omega_kz_{1k})\right|e_1\\
    &+\left|\exp\left(-\rho_2|z_{2k}|\right)(\omega_kz_{2k})^{-\nu_2}J_{\nu_2+2m}(\omega_kz_{2k})\right|e_2\\
    &<_hA_k.
\end{align*}
Hence, the proof of this theorem is completed.
 \end{proof}
 \begin{theo}\label{th:f3}
     Suppose that $\zeta(\omega)\in\mathcal{A}_\sigma$ and $\mathcal{V}=\alpha_1+j\alpha_2=\nu_1e_1+\nu_2e_2\in\mathbb{BC}$, with satisfies the condition $\Re(\alpha_1)+\frac{1}{2}\geq|\Im(\alpha_2)|$, then
     \begin{description}
         \item[(i)]  $\mathscr{H}_{\mathcal{V}+\sigma+1}(\mathcal{N}_{\mathcal{V}+\sigma}\zeta(\omega))=(-1)^n[\mathcal{Z}]\mathscr{H}_{\mathcal{V}+\sigma}(\zeta(\omega))$
         \item[(ii)] $\mathscr{H}_{\mathcal{V}+\sigma}(\mathcal{M}_{\mathcal{V}+\sigma}\zeta(\omega))=[\mathcal{Z}]\mathscr{H}_{\mathcal{V}+\sigma+1}(\zeta(\omega))$
         \item[(iii)] $\mathscr{H}_{\mathcal{V}+\sigma}(\mathcal{M}_{\mathcal{V}+\sigma}\mathcal{N}_{\mathcal{V}+\sigma}\zeta(\omega))=(-1)^n[\mathcal{Z}]^2\mathscr{H}_{\mathcal{V}+\sigma}(\zeta(\omega))$.
     \end{description}
 \end{theo}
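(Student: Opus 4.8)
The plan is to establish all three identities by reducing each $n$-dimensional integral to $n$ successive one-dimensional integrations by parts. The key structural observation is that both differential operators factor as products over the index $k$, namely $\mathcal{N}_{\mathcal{V}+\sigma}=[\omega]^{\mathcal{V}+\sigma+\frac12}\frac{\partial^n}{\partial\omega_1\cdots\partial\omega_n}[\omega]^{-\mathcal{V}-\sigma-\frac12}$ and $\mathcal{M}_{\mathcal{V}+\sigma}=[\omega]^{-\mathcal{V}-\sigma-\frac12}\frac{\partial^n}{\partial\omega_1\cdots\partial\omega_n}[\omega]^{\mathcal{V}+\sigma+\frac12}$, while the transform kernel $\prod_{k}\sqrt{\omega_kZ_k}\,\mathcal{J}_{\mathcal{V}+\sigma}(\omega_kZ_k)$ is also a product of single-variable factors. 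The engine driving the order shifts is the pair of differential relations proved earlier, $\frac{d}{dZ}\bigl(Z^{\mathcal{W}}\mathcal{J}_{\mathcal{W}}(Z)\bigr)=Z^{\mathcal{W}}\mathcal{J}_{\mathcal{W}-1}(Z)$ and $\frac{d}{dZ}\bigl(Z^{-\mathcal{W}}\mathcal{J}_{\mathcal{W}}(Z)\bigr)=-Z^{-\mathcal{W}}\mathcal{J}_{\mathcal{W}+1}(Z)$, which convert a derivative of the kernel into a shift of the Bessel order. Throughout I would set $u=\omega_kZ_k$ on each factor.

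For \textbf{(i)} I would insert the operator $\mathcal{N}_{\mathcal{V}+\sigma}$ into the definition of $\mathscr{H}_{\mathcal{V}+\sigma+1}$ and absorb the prefactor $[\omega]^{\mathcal{V}+\sigma+\frac12}$ into the kernel, turning it into $\prod_k g_k(\omega_k)$ with $g_k(\omega_k)=Z_k^{-\mathcal{V}-\sigma-\frac12}u^{\mathcal{V}+\sigma+1}\mathcal{J}_{\mathcal{V}+\sigma+1}(u)$. Integrating by parts once in each of the $n$ variables against $f(\omega)=[\omega]^{-\mathcal{V}-\sigma-\frac12}\zeta(\omega)$ produces the factor $(-1)^n$ and replaces each $g_k$ by $g_k'$; the first differential relation with $\mathcal{W}=\mathcal{V}+\sigma+1$ gives $g_k'(\omega_k)=Z_k^{3/2}\omega_k^{\mathcal{V}+\sigma+1}\mathcal{J}_{\mathcal{V}+\sigma}(\omega_kZ_k)$. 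Recollecting powers via $[\omega]^{-\mathcal{V}-\sigma-\frac12}\prod_k\omega_k^{\mathcal{V}+\sigma+1}=\prod_k\omega_k^{1/2}$ and $\prod_kZ_k^{3/2}=[\mathcal{Z}]\prod_kZ_k^{1/2}$ reassembles exactly the kernel of $\mathscr{H}_{\mathcal{V}+\sigma}$, yielding $(-1)^n[\mathcal{Z}]\mathscr{H}_{\mathcal{V}+\sigma}(\zeta)$.

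For \textbf{(ii)} the computation is the mirror image: I would integrate by parts against $h(\omega)=[\omega]^{\mathcal{V}+\sigma+\frac12}\zeta(\omega)$ and differentiate the single-variable kernel factor $\ell_k(\omega_k)=Z_k^{\mathcal{V}+\sigma+\frac12}u^{-\mathcal{V}-\sigma}\mathcal{J}_{\mathcal{V}+\sigma}(u)$ using the second differential relation with $\mathcal{W}=\mathcal{V}+\sigma$. Now each $\ell_k'$ carries an extra minus sign, so the $(-1)^n$ from integration by parts and the $(-1)^n$ from the $n$ factors combine to $(-1)^{2n}=1$; collecting powers then gives $[\mathcal{Z}]\mathscr{H}_{\mathcal{V}+\sigma+1}(\zeta)$. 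Finally \textbf{(iii)} follows immediately by composition: applying (ii) with $\zeta$ replaced by $\mathcal{N}_{\mathcal{V}+\sigma}\zeta$ and then (i) gives $\mathscr{H}_{\mathcal{V}+\sigma}(\mathcal{M}_{\mathcal{V}+\sigma}\mathcal{N}_{\mathcal{V}+\sigma}\zeta)=[\mathcal{Z}]\mathscr{H}_{\mathcal{V}+\sigma+1}(\mathcal{N}_{\mathcal{V}+\sigma}\zeta)=(-1)^n[\mathcal{Z}]^2\mathscr{H}_{\mathcal{V}+\sigma}(\zeta)$.

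The main obstacle, and the only place where the hypotheses genuinely enter, is justifying that every boundary term arising in the integrations by parts vanishes. At the upper endpoint this is free, since $\zeta(\omega)=0$ for $\omega_i>\varepsilon$ forces $f$, $h$ and all their partial derivatives to vanish there. At $\omega_k\to 0^+$ one must check that the kernel factor multiplied by the relevant derivative of $f$ (resp. $h$) tends to zero; using the near-origin behaviour $\mathcal{J}_{\mathcal{W}}(u)\sim(u/2)^{\mathcal{W}}/\Gamma_b(\mathcal{W}+1)$ the factor $g_k$ vanishes like $\omega_k^{2(\mathcal{V}+\sigma)+2}$ while $\ell_k$ tends to a constant compensated by the positive powers of $\omega_k$ carried by $h$, and the condition $\Re(\alpha_1)+\tfrac12\geq|\Im(\alpha_2)|$, equivalently $\Re(\nu_1),\Re(\nu_2)\geq-\tfrac12$, is precisely what kills these contributions. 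I would carry out this vanishing estimate separately on the two idempotent projections $e_1,e_2$ and then reassemble in the hyperbolic sense, since every expression above splits as $(\cdot)e_1+(\cdot)e_2$ and the whole argument thereby reduces to the two classical complex Zemanian-type estimates.
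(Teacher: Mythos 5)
Your proposal is correct and follows essentially the same route as the paper: repeated one-variable integration by parts against the factored kernel, using the differential relation $\frac{\partial}{\partial\omega_k}\bigl(\omega_k^{\mathcal{V}+\sigma+1}\mathcal{J}_{\mathcal{V}+\sigma+1}(\omega_kZ_k)\bigr)=\omega_k^{\mathcal{V}+\sigma+1}Z_k\mathcal{J}_{\mathcal{V}+\sigma}(\omega_kZ_k)$ (and its mirror for (ii)), with the boundary terms killed by the compact support of $\zeta$ and the $O(\omega_k^{2\sigma+1})$ behaviour of the kernel near the origin under $\Re(\nu_i)\geq-\frac12$, and (iii) obtained by composing (i) and (ii). Your treatment of (ii) and of the idempotent splitting is in fact more explicit than the paper's, which only asserts that (ii) is "similar".
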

 \begin{proof}
    (i)  By using the definition of the operator $\mathcal{N}_{\mathcal{V}}$ and $n$-dimensional bicomplex Hankel transformation \eqref{eq:h11}, we get
    \begin{align}\label{eq:s21}
       &\mathscr{H}_{\mathcal{V}+\sigma+1}(\mathcal{N}_{\mathcal{V}+\sigma}\zeta(\omega))\nonumber\\&=\int_{0}^{\infty}\ldots\int_{0}^{\infty}[\omega]^{\mathcal{V}+\sigma+\frac{1}{2}}\frac{\partial^n}{\partial\omega_1...\partial\omega_n}[\omega]^{-\mathcal{V}-\sigma-\frac{1}{2}}\zeta(\omega) \prod_{k=1}^{n}\sqrt{\omega_kZ_k}\;\mathcal{J}_{\mathcal{V}+\sigma+1}\left(\omega_kZ_k\right) d\omega_1\ldots d\omega_n \nonumber\\
       &=[\mathcal{Z}]^\frac{1}{2}\int_{0}^{\infty}\ldots\int_{0}^{\infty}[\omega]^{\mathcal{V}+\sigma+1}\prod_{k=1}^{n}\mathcal{J}_{\mathcal{V}+\sigma+1}\left(\omega_kZ_k\right)\frac{\partial^n}{\partial\omega_1...\partial\omega_n}[\omega]^{-\mathcal{V}-\sigma-\frac{1}{2}}\zeta(\omega)  d\omega_1\ldots d\omega_n
    \end{align}
    Now we applying integration by parts and using the result
    \begin{align*}
        \frac{\partial}{\partial \omega_1}\omega_1^{\mathcal{V}+\sigma+1}\mathcal{J}_{\mathcal{V}+\sigma+1}(\omega_1Z_1)=\omega_1^{\mathcal{V}+\sigma+1}Z_1\mathcal{J}_{\mathcal{V}+\sigma}(\omega_1Z_1)
    \end{align*}
    in \eqref{eq:s21}, we have
    \begin{align}\label{eq:f123}
        &\mathscr{H}_{\mathcal{V}+\sigma+1}(\mathcal{N}_{\mathcal{V}+\sigma}\zeta(\omega))\nonumber\\&=\int_{0}^{\infty}\ldots\int_{0}^{\infty}\left\{\left[\omega_1^{\mathcal{V}+\sigma+1}\mathcal{J}_{\mathcal{V}+\sigma+1}\frac{\partial^n}{\partial\omega_2...\partial\omega_n}[\omega]^{-\mathcal{V}-\sigma-\frac{1}{2}}\zeta(\omega)\right]_0^\infty\right.\nonumber\\&\left.\hspace{80pt}-\int_{0}^\infty \omega_1^{\mathcal{V}+\sigma+1}Z_1\mathcal{J}_{\mathcal{V}+\sigma}(\omega_1Z_1)\frac{\partial^n}{\partial\omega_2...\partial\omega_n}[\omega]^{-\mathcal{V}-\sigma-\frac{1}{2}}\zeta(\omega)\right\}\nonumber\\&\hspace{60pt}\times[\mathcal{Z}]^\frac{1}{2}\prod_{k=2}^{n}\omega_k^{\mathcal{V}+\sigma+1}\mathcal{J}_{\mathcal{V}+\sigma+1}\left(\omega_kZ_k\right)  d\omega_2\ldots d\omega_n.
    \end{align}
    Since $\zeta(\omega)=0$ on $E_1$, $\sigma$ is a positive hyperbolic number and $\omega_1^{\mathcal{V}+\sigma+1}\mathcal{J}_{\mathcal{V}+\sigma+1}(\omega_1Z_1)=O(\omega_1^{2\sigma+1})$ as $\omega_1\rightarrow0$, then the limit terms of \eqref{eq:f123} vanish. Similarly by using integration by parts through the subsequent components $\omega_2,...,\omega_n$, we obtain
    \begin{align*}
        \mathscr{H}_{\mathcal{V}+\sigma+1}(\mathcal{N}_{\mathcal{V}+\sigma}\zeta(\omega))&=(-1)^n\int_{0}^{\infty}\ldots\int_{0}^{\infty}\prod_{k=1}^{n}Z_k(\omega_k)^\frac{1}{2}\mathcal{J}_{\mathcal{V}+\sigma}(\omega_1Z_1)\zeta(\omega)[\mathcal{Z}]^{\frac{1}{2}}d\omega_1\ldots d\omega_n\\
        &=(-1)^n[\mathcal{Z}]\mathscr{H}_{\mathcal{V}+\sigma}(\zeta(\omega)).
    \end{align*}
    Hence, the proof of (i) is complete. The proof of (ii) is similar to the proof of (i). By using the result of (i) and (ii) we can easily proof (iii).
 \end{proof}
  The $n$-dimensional bicomplex Hankel transform's inversion formula is now defined as follows:
  \begin{theo}
  Let $\mathcal{V}=\alpha_1+j\alpha_2=\nu_1e_1+\nu_2e_2\in\mathbb{BC},\zeta(\omega)\in \mathcal{A}_{\sigma,\varepsilon},\mathscr{H}_{\mathcal{V}}(\zeta(\omega))=\eta(\mathcal{Z})$ and we restricted $\mathcal{Z}$ on $\{\mathcal{Z}=(Z_1,...,Z_n):Z_k\in\mathbb{R},k=1,2,...,n\}$ then for $\Re(\alpha_1)+\frac{1}{2}\geq|\Im(\alpha_2)|$, $\mathscr{H}_{\mathcal{V}}^{-1}\eta(\mathcal{Z})$ is given by
  \begin{align}\label{eq:a156}
\lim_{s\rightarrow\infty}\int_{0}^{s}\ldots\int_{0}^{s}\eta(\mathcal{Z})\prod_{k=1}^{n}\sqrt{\omega_kZ_k}\;\mathcal{J}_\mathcal{V}\left(\omega_kZ_k\right) dZ_1\ldots dZ_n.
  \end{align}
  \end{theo}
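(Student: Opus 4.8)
The plan is to exploit the idempotent decomposition that has governed every previous computation and thereby reduce the bicomplex inversion to the classical $n$-dimensional Hankel inversion of Zemanian \cite{zehankel} and Koh \cite{hankel} applied separately to the two projections. First I would write $\zeta(\omega)=\zeta_1(\omega)e_1+\zeta_2(\omega)e_2$ and note that, since $\mathcal{Z}$ is restricted to real values, each $Z_k=z_{1k}e_1+z_{2k}e_2$ satisfies $z_{1k}=z_{2k}=Z_k$, so that by Theorem \ref{eq:s3} the kernel splits as $\mathcal{J}_\mathcal{V}(\omega_kZ_k)=J_{\nu_1}(\omega_kZ_k)e_1+J_{\nu_2}(\omega_kZ_k)e_2$. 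Because $e_1e_2=0$, $e_1^2=e_1$, $e_2^2=e_2$, the product over $k$ and the definition \eqref{eq:h11} then yield the clean separation
\begin{align*}
\eta(\mathcal{Z})=\mathscr{H}_\mathcal{V}(\zeta)=\mathfrak{h}_{\nu_1}(\zeta_1)(\mathcal{Z})\,e_1+\mathfrak{h}_{\nu_2}(\zeta_2)(\mathcal{Z})\,e_2,
\end{align*}
where $\mathfrak{h}_{\nu_i}$ is the classical $n$-dimensional Hankel transform of order $\nu_i$.

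Next I would observe that the hypothesis $\Re(\alpha_1)+\frac{1}{2}\geq|\Im(\alpha_2)|$ forces $\Re(\nu_1)\geq-\frac{1}{2}$ and $\Re(\nu_2)\geq-\frac{1}{2}$, exactly the reduction already used in the preceding boundedness theorem and precisely the condition under which the classical transform $\mathfrak{h}_{\nu_i}$ is an automorphism that is self-reciprocal with respect to the kernel $\sqrt{xy}\,J_{\nu_i}(xy)$. Applying the classical inversion in each of the variables $Z_1,\dots,Z_n$ successively then gives, for $i=1,2$,
\begin{align*}
\lim_{s\to\infty}\int_{0}^{s}\!\!\ldots\!\!\int_{0}^{s}\mathfrak{h}_{\nu_i}(\zeta_i)(\mathcal{Z})\prod_{k=1}^{n}\sqrt{\omega_kZ_k}\,J_{\nu_i}(\omega_kZ_k)\,dZ_1\ldots dZ_n=\zeta_i(\omega).
\end{align*}

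Finally I would recombine. Substituting the idempotent splittings of $\eta$ and of the kernel into \eqref{eq:a156}, the $e_1$ and $e_2$ parts decouple (all cross terms carry a factor $e_1e_2=0$), so the single limit in \eqref{eq:a156} reduces to the two component limits above and equals $\zeta_1(\omega)e_1+\zeta_2(\omega)e_2=\zeta(\omega)$, which establishes $\mathscr{H}_\mathcal{V}^{-1}\eta=\zeta$. The main obstacle is the analytic justification of this scheme rather than its algebra: one must verify that the membership $\zeta\in\mathcal{A}_{\sigma,\varepsilon}$ descends to the decay and smoothness of each projection $\zeta_1,\zeta_2$ demanded by the classical Koh inversion theorem, and one must legitimately replace the single truncation parameter $s\to\infty$ in \eqref{eq:a156} by $n$ successive one-dimensional Hankel inversions. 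For the required uniform control of the truncated oscillatory integrals the natural tool is the hyperbolic bound \eqref{eq:c96} on $\exp(-\rho|Z_k|_h)(\omega_kZ_k)^{-\mathcal{V}}\mathcal{J}_{\mathcal{V}+2m}(\omega_kZ_k)$, which dominates the tails uniformly in $\omega$ and permits the interchange of limit and integration.
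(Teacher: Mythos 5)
Your proposal is correct and follows essentially the same route as the paper: decompose $\zeta$, $\eta$, and the kernel into idempotent components, apply the classical Hankel inversion (for $\nu_i\geq-\tfrac12$) successively in each of the variables $Z_1,\dots,Z_n$ within each component, and recombine using $e_1e_2=0$. The analytic caveats you raise (interchange of limit and integration, descent of the $\mathcal{A}_{\sigma,\varepsilon}$ bounds to the projections) are in fact left implicit in the paper's own argument, so your treatment is, if anything, slightly more careful.
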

 \begin{proof}
     Since, $\mathscr{H}_{\mathcal{V}}(\zeta(\omega))=\eta(\mathcal{Z})$ then by using \eqref{eq:h11}, we obtain
     \begin{align}\label{eq:6j}
         &\lim_{s\rightarrow\infty}\int_{0}^{s}\ldots\int_{0}^{s}\eta(\mathcal{Z})\prod_{k=1}^{n}\sqrt{\omega_kZ_k}\;\mathcal{J}_\mathcal{V}\left(\omega_kZ_k\right) dZ_1\ldots dZ_n\nonumber\\
         &=\lim_{s\rightarrow\infty}\int_{0}^{s}\ldots\int_{0}^{s}\left[\int_{0}^{\infty}\ldots\int_{0}^{\infty}\zeta(t) \prod_{k=1}^{n}\sqrt{t_kZ_k}\;\mathcal{J}_\mathcal{V}\left(t_kZ_k\right) dt_1\ldots dt_n\right]\nonumber\\&\hspace{150pt}\times\prod_{k=1}^{n}\sqrt{\omega_kZ_k}\;\mathcal{J}_\mathcal{V}\left(\omega_kZ_k\right) dZ_1\ldots dZ_n\nonumber\\
         &=\lim_{s\rightarrow\infty}\int_{0}^{s}\ldots\int_{0}^{s}\int_{0}^{\infty}\ldots\int_{0}^{\infty}\zeta(t) \prod_{k=1}^{n}\sqrt{t_k\omega_k}Z_k\;\mathcal{J}_\mathcal{V}\left(t_kZ_k\right) \mathcal{J}_\mathcal{V}\left(\omega_kZ_k\right) dt_1\ldots dt_n dZ_1\ldots dZ_n\nonumber\\
         &=\lim_{s\rightarrow\infty}\int_{0}^{s}\ldots\int_{0}^{s}\int_{0}^{\infty}\ldots\int_{0}^{\infty}\zeta_1(t) \prod_{k=1}^{n}\sqrt{t_k\omega_k}Z_k\;J_{\nu_1}\left(t_kZ_k\right) J_{\nu_1}\left(\omega_kZ_k\right) dt_1\ldots dt_n dZ_1\ldots dZ_ne_1\nonumber\\
         &+\lim_{s\rightarrow\infty}\int_{0}^{s}\ldots\int_{0}^{s}\int_{0}^{\infty}\ldots\int_{0}^{\infty}\zeta_2(t) \prod_{k=1}^{n}\sqrt{t_k\omega_k}Z_k\;J_{\nu_2}\left(t_kZ_k\right) J_{\nu_2}\left(\omega_kZ_k\right) dt_1\ldots dt_n dZ_1\ldots dZ_ne_2.
     \end{align}
   Applying the inversion formula of Hankel transform \cite{transforms}
     $$f(x)=\int_{0}^{\infty} \sqrt{yx} J_\nu\left(yx\right)\int_{0}^{\infty} \sqrt{yt} J_\nu\left(yt\right)f(t)dtdy$$ for $\nu\geq-\frac{1}{2}$ in the first integral of \eqref{eq:6j}, we obtain
     \begin{align*}
     &\lim_{s\rightarrow\infty}\int_{0}^{s}\ldots\int_{0}^{s}\int_{0}^{\infty}\ldots\int_{0}^{\infty}\zeta_1(t) \prod_{k=1}^{n}\sqrt{t_k\omega_k}Z_k\;J_{\nu_1}\left(t_kZ_k\right) J_{\nu_1}\left(\omega_kZ_k\right) dt_1\ldots dt_n dZ_1\ldots dZ_n\\
     &=\lim_{s\rightarrow\infty}\int_{0}^{s}\ldots\int_{0}^{s}\int_{0}^{\infty}\ldots\int_{0}^{\infty} \prod_{k=1}^{n-1}\sqrt{t_k\omega_k}Z_k\;J_{\nu_1}\left(t_kZ_k\right) J_{\nu_1}\left(\omega_kZ_k\right)\\&\hspace{55pt}\times\left[\int_{0}^{s}\int_{0}^{\infty}\zeta_1(t)\sqrt{t_n\omega_n}Z_nJ_{\nu_1}\left(t_nZ_n\right) J_{\nu_1}\left(\omega_nZ_n\right)dt_ndZ_n\right] dt_1\ldots dt_{n-1} dZ_1\ldots dZ_{n-1}\\
     &=\lim_{s\rightarrow\infty}\int_{0}^{s}\ldots\int_{0}^{s}\int_{0}^{\infty}\ldots\int_{0}^{\infty} \prod_{k=1}^{n-1}\sqrt{t_k\omega_k}Z_k\;J_{\nu_1}\left(t_kZ_k\right) J_{\nu_1}\left(\omega_kZ_k\right)\\&\hspace{200pt}\times\zeta_1(t_1,...,t_{n-1},\omega_n) dt_1\ldots dt_{n-1} dZ_1\ldots dZ_{n-1}.
     \end{align*}
     Using the similar procedure, after solving component wise, we obtain
     \begin{align}\label{eq:i3}
         \lim_{s\rightarrow\infty}\int_{0}^{s}\ldots\int_{0}^{s}\int_{0}^{\infty}\ldots\int_{0}^{\infty}\zeta_1(t) \prod_{k=1}^{n}\sqrt{t_k\omega_k}Z_k\;J_{\nu_1}\left(t_kZ_k\right) J_{\nu_1}\left(\omega_kZ_k\right) dt_1\ldots dt_n dZ_1\ldots dZ_n=\zeta_1(\omega),
     \end{align}
     for $\nu_1\geq-\frac{1}{2}$. Similarly,
     \begin{align}\label{eq:i6}
         \lim_{s\rightarrow\infty}\int_{0}^{s}\ldots\int_{0}^{s}\int_{0}^{\infty}\ldots\int_{0}^{\infty}\zeta_2(t) \prod_{k=1}^{n}\sqrt{t_k\omega_k}Z_k\;J_{\nu_2}\left(t_kZ_k\right) J_{\nu_2}\left(\omega_kZ_k\right) dt_1\ldots dt_n dZ_1\ldots dZ_n=\zeta_2(\omega)
     \end{align}
     for $\nu_2\geq-\frac{1}{2}$. Using \eqref{eq:i3}, \eqref{eq:i6} and \eqref{eq:6j}, we have
     \begin{align*}
         \lim_{s\rightarrow\infty}\int_{0}^{s}\ldots\int_{0}^{s}\eta(\mathcal{Z})\prod_{k=1}^{n}\sqrt{\omega_kZ_k}\;\mathcal{J}_\mathcal{V}\left(\omega_kZ_k\right) dZ_1\ldots dZ_n=\zeta_1(\omega)e_1+\zeta_2(\omega)e_2=\zeta(\omega).
     \end{align*}
     Thus, the proof of the theorem is complete.
 \end{proof}
 \begin{theo}
    Assume that $\mathcal{V}=\alpha_1+j\alpha_2=\nu_1e_1+\nu_2e_2\in\mathbb{BC}$, where $e_1=\frac{1+k}{2}, e_2=\frac{1-k}{2}$ and satisfies the inequality $\Re(\alpha_1)+\frac{1}{2}\geq|\Im(\alpha_2)|$, then $\mathscr{H}_{\mathcal{V}}$ is an isomorphism from $\mathcal{A}_{\sigma,\varepsilon}$ to  $\mathcal{C}^\mathcal{V}_\rho$.
 \end{theo}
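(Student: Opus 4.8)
The plan is to verify the four defining properties of an isomorphism of topological vector spaces in order: linearity, the mapping property together with continuity of $\mathscr{H}_\mathcal{V}$, bijectivity, and continuity of the inverse. Linearity is immediate from the integral definition \eqref{eq:h11}. The conceptual backbone of the whole argument is the idempotent splitting: by Theorem \ref{eq:s3} the kernel factors as $\mathcal{J}_\mathcal{V}(\omega_kZ_k)=J_{\nu_1}(\omega_kz_{1k})e_1+J_{\nu_2}(\omega_kz_{2k})e_2$, so $\mathscr{H}_\mathcal{V}$ decomposes componentwise into two classical $n$-dimensional Hankel transforms $h_{\nu_1}$ and $h_{\nu_2}$ in the sense of Koh \cite{hankel}. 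The hypothesis $\Re(\alpha_1)+\frac{1}{2}\geq|\Im(\alpha_2)|$ translates, through $\nu_1=\alpha_1-i\alpha_2$ and $\nu_2=\alpha_1+i\alpha_2$, into $\Re(\nu_1)\geq-\frac{1}{2}$ and $\Re(\nu_2)\geq-\frac{1}{2}$, which is exactly the regime in which each classical transform is an automorphism.

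For the mapping property and forward continuity I would estimate each seminorm $\mathcal{G}^\mathcal{V}_{\rho,m}(\mathscr{H}_\mathcal{V}\zeta)$ by finitely many seminorms $\mathcal{T}^{\sigma,\varepsilon}_p(\zeta)$. The essential device is Theorem \ref{th:f3}(iii), iterated $m$ times, which replaces the polynomial weight $[\mathcal{Z}]^{2m}$ sitting inside the transform by the differential operator $(\mathcal{M}_{\mathcal{V}+\sigma}\mathcal{N}_{\mathcal{V}+\sigma})^m$ acting on $\zeta$; since $\mathcal{N}_{\mathcal{V}+\sigma}$ and $\mathcal{M}_{\mathcal{V}+\sigma}$ are continuous linear maps between the testing spaces, the resulting function remains a legitimate test function whose seminorms are dominated by those of $\zeta$. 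I would then control the kernel by the estimate \eqref{eq:c96}, namely $|\exp(-\rho|Z_k|_h)(\omega_kZ_k)^{-\mathcal{V}}\mathcal{J}_{\mathcal{V}+2m}(\omega_kZ_k)|_h<_hA_k$, which absorbs precisely the exponential weight appearing in the $\mathcal{C}^\mathcal{V}_\rho$-seminorm; the $\omega$-integration then converges by the support restriction $\zeta=0$ on $E_1$ together with the weight control supplied by the $\mathcal{A}_{\sigma,\varepsilon}$-seminorms. This yields a bound of the form $\mathcal{G}^\mathcal{V}_{\rho,m}(\mathscr{H}_\mathcal{V}\zeta)<_hC\sum_p\mathcal{T}^{\sigma,\varepsilon}_p(\zeta)$, which simultaneously shows $\mathscr{H}_\mathcal{V}\zeta\in\mathcal{C}^\mathcal{V}_\rho$ and, by Lemma \ref{lemma2} ($\mathcal{D}$-boundedness is equivalent to continuity), that $\mathscr{H}_\mathcal{V}$ is continuous.

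Bijectivity I would deduce from the inversion theorem already established: the identity $\mathscr{H}_\mathcal{V}^{-1}\mathscr{H}_\mathcal{V}\zeta=\zeta$ coming from \eqref{eq:a156} gives injectivity at once, while surjectivity requires showing that for every $\eta\in\mathcal{C}^\mathcal{V}_\rho$ the candidate inverse \eqref{eq:a156} defines an element of $\mathcal{A}_{\sigma,\varepsilon}$ whose $\mathscr{H}_\mathcal{V}$-image is $\eta$. Working componentwise, each piece reduces to the classical fact that $h_{\nu_i}$ is an automorphism for $\Re(\nu_i)\geq-\frac{1}{2}$ (Zemanian \cite{zehankel}, Koh \cite{hankel}), and the two scalar isomorphisms recombine into a single bicomplex one because the topologies on $\mathcal{A}_{\sigma,\varepsilon}$ and $\mathcal{C}^\mathcal{V}_\rho$ are defined through hyperbolic, hence componentwise, seminorms. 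Continuity of $\mathscr{H}_\mathcal{V}^{-1}$ would then follow from the mirror-image seminorm estimate, using the operator identities of Theorem \ref{th:f3} with the roles of $\mathcal{N}$ and $\mathcal{M}$ interchanged.

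The hard part will be the reverse direction: the continuity of $\mathscr{H}_\mathcal{V}^{-1}$ and the verification that it actually lands back in $\mathcal{A}_{\sigma,\varepsilon}$. Unlike the forward map, where the support condition on $\zeta$ keeps the $\omega$-integrals under control, a general $\eta\in\mathcal{C}^\mathcal{V}_\rho$ is governed only by exponential weights, so I must justify differentiation under the integral sign, the interchange of the limit $s\to\infty$ in \eqref{eq:a156} with the iterated integrations, and the decay required for $[\omega]^{-\sigma-\frac12}\mathscr{H}_\mathcal{V}^{-1}\eta$ to satisfy the $\mathcal{T}^{\sigma,\varepsilon}_p$-bounds. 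Establishing these improper-integral estimates uniformly — equivalently, checking that the classical Hankel inversion and its seminorm bounds survive intact when passed through the hyperbolic norm on each idempotent component — is the technical crux on which the whole theorem rests.
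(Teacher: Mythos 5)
Your plan follows essentially the same route as the paper's proof: the forward seminorm bound $\mathcal{G}^\mathcal{V}_{\rho,m}(\mathscr{H}_\mathcal{V}\zeta)<_hA\,\mathcal{T}^{\mathcal{V},\varepsilon}_{2m}(\zeta)$ obtained by trading $[\mathcal{Z}]^{2m}$ for iterated $\mathcal{N}$-operators via Theorem \ref{th:f3} and absorbing the exponential weight through the kernel estimate \eqref{eq:c96}, and the reverse bound obtained from the inversion formula \eqref{eq:a156} by differentiating under the integral, using boundedness of $(\omega_kZ_k)^{-\mathcal{V}}\mathcal{J}_{\mathcal{V}+p}(\omega_kZ_k)$ and a $\prod_k(1+Z_k^2)^{-1}$ factor to control the improper integral. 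The only cosmetic difference is that you invoke part (iii) of Theorem \ref{th:f3} where the paper iterates part (i) (shifting the order to $\mathcal{V}+2m$ so that \eqref{eq:c96} applies directly), and the technical crux you flag at the end is exactly the computation the paper carries out.
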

 \begin{proof}
     Let us consider $\zeta(\omega)\in\mathcal{A}_{\mathcal{V},\varepsilon}$. Then \begin{align}\label{eq:x9}
     \mathscr{H}_{\mathcal{V}}(\zeta(\omega))=\eta(\mathcal{Z})=\int_{0}^{\infty}\ldots\int_{0}^{\infty}\zeta(\omega) \prod_{k=1}^{n}\sqrt{\omega_kZ_k}\;\mathcal{J}_\mathcal{V}\left(\omega_kZ_k\right) d\omega_1\ldots d\omega_n.\end{align}
     Using (i) of Theorem \ref{th:f3}, we obtain
     \begin{align}\label{eq:7g}
         \mathscr{H}_{\mathcal{V}+2m}(\mathcal{N}_{\mathcal{V}+2m-1}\ldots\mathcal{N}_{\mathcal{V}}\zeta(\omega))=(-1)^n[\mathcal{Z}]^{2m} \mathscr{H}_\mathcal{V}(\zeta(\omega))
     \end{align}
     Now using \eqref{eq:x9} and \eqref{th:f3}, we have
     \begin{align}\label{eq:n71}
       &(-1)^n\exp\left(-\rho\sum_{k=1}^{n}|Z_k|_h\right)[\mathcal{Z}]^{2m-\mathcal{V}-\frac{1}{2}} \eta(\mathcal{Z})\nonumber\\
       &=\exp\left(-\rho\sum_{k=1}^{n}|Z_k|_h\right)[\mathcal{Z}]^{-\mathcal{V}-\frac{1}{2}} \mathscr{H}_{\mathcal{V}+2m}(\mathcal{N}_{\mathcal{V}+2m-1}\ldots\mathcal{N}_{\mathcal{V}}\zeta(\omega))\nonumber\\
       &=\exp\left(-\rho\sum_{k=1}^{n}|Z_k|_h\right)\int_{0}^{\infty}\ldots\int_{0}^{\infty}[\mathcal{N}_{\mathcal{V}+2m-1}\ldots\mathcal{N}_{\mathcal{V}}\zeta(\omega)] \prod_{k=1}^{n}\omega_k^{\frac{1}{2}}Z_k^{-\mathcal{V}}\;\mathcal{J}_{\mathcal{V}+2m}\left(\omega_kZ_k\right) d\omega_1\ldots d\omega_n\nonumber\\
       &=\int_{0}^{\infty}\ldots\int_{0}^{\infty}\left\{[\omega]^{\mathcal{V}+2m+\frac{1}{2}}(\omega^{-1}D_\omega)^{2m}[\omega]^{-\mathcal{V}-\frac{1}{2}}\zeta(\omega)\right\}\nonumber\\&\hspace{75pt}\times \prod_{k=1}^{n}\omega_k^{\frac{1}{2}}Z_k^{-\mathcal{V}}\exp\left(-\rho|Z_k|_h\right)\mathcal{J}_{\mathcal{V}+2m}\left(\omega_kZ_k\right) d\omega_1\ldots d\omega_n\nonumber \\
       &=\int_{0}^{\infty}\ldots\int_{0}^{\infty}\left\{[\omega]^{2\mathcal{V}+2m+1}(\omega^{-1}D_\omega)^{2m}[\omega]^{-\mathcal{V}-\frac{1}{2}}\zeta(\omega)\right\}\nonumber\\&\hspace{75pt}\times \prod_{k=1}^{n}(\omega_kZ_k)^{-\mathcal{V}}\exp\left(-\rho|Z_k|_h\right)\mathcal{J}_{\mathcal{V}+2m}\left(\omega_kZ_k\right) d\omega_1\ldots d\omega_n.
     \end{align}
     Using \eqref{eq:c96} and \eqref{eq:n71}, we get
     \begin{align*}
         \left|\exp\left(-\rho\sum_{k=1}^{n}|Z_k|_h\right)[\mathcal{Z}]^{2m-\mathcal{V}-\frac{1}{2}} \eta(\mathcal{Z})\right|_h&<_h\mathcal{T}^{\mathcal{V},\varepsilon}_{2m}(\zeta(\omega))\prod_{k=1}^{n}A_k\int_{0}^{\varepsilon}\ldots\int_{0}^{\varepsilon}[\omega]^{2\mathcal{V}+2m+1}d\omega_1\ldots d\omega_n\\
         &=A\mathcal{T}^{\mathcal{V},\varepsilon}_{2m}(\zeta(\omega)),
  \end{align*}
  where $A$ is a constant. Therefore, $\mathcal{G}^\mathcal{V}_{\rho,m}(\eta(\mathcal{Z}))<_hA\mathcal{T}^{\mathcal{V},\varepsilon}_{2m}(\zeta(\omega))$. Thus, $\mathscr{H}_\mathcal{V}$ is a linear continuous mapping from $\mathcal{A}_{\mathcal{V},\varepsilon}$ to  $\mathcal{C}^\mathcal{V}_\rho$. Conversely let, $\eta(\mathcal{Z})\in\mathcal{C}^\mathcal{V}_\rho$ and $\zeta(\omega)=\mathscr{H}_\mathcal{V}^{-1}(\eta(\mathcal{Z}))$. Now using inversion formula of bicomplex Hankel transform \eqref{eq:a156}, we obtain
  \begin{align*}
\zeta(\omega)&=\lim_{s\rightarrow\infty}\int_{0}^{s}\ldots\int_{0}^{s}\eta(\mathcal{Z})\prod_{k=1}^{n}\sqrt{\omega_kZ_k}\;\mathcal{J}_\mathcal{V}\left(\omega_kZ_k\right) dZ_1\ldots dZ_n
  \end{align*}
  By using differentiating under the integral sign through each variable $\omega_k$ for $k=1,2,...,n$ , we get
  \begin{align}\label{eq:g78}
      &(\omega^{-1}D_\omega)^p[\omega]^{-\mathcal{V}-\frac{1}{2}}\zeta(\omega)\nonumber\\
      &=\lim_{s\rightarrow\infty}\int_{0}^{s}\ldots\int_{0}^{s}(-1)^n\eta(\mathcal{Z})[\mathcal{Z}]^{2p+\mathcal{V}+\frac{1}{2}}\prod_{k=1}^{n}(\omega_kZ_k)^{-\mathcal{V}-p}\mathcal{J}_{\mathcal{V}+p}\left(\omega_kZ_k\right) dZ_1\ldots dZ_n
  \end{align}
  Since, $\eta(\mathcal{Z})$ is of rapid descent for and $(\omega_kz_{ik})^{-\nu_i-p}J_{\nu_i+p}\left(\omega_kz_{ik}\right)$ is bounded on $0<\omega_kz_{ik}<\infty$ for $i=1,2$ then
  \begin{align}\label{eq:d43}
      \prod_{k=1}^{n}(\omega_kZ_k)^{-\mathcal{V}}\mathcal{J}_{\mathcal{V}+p}\left(\omega_kZ_k\right)&=\prod_{k=1}^{n}(\omega_kz_{1k})^{-\nu_1}J_{\nu_1+p}\left(\omega_kz_{1k}\right)e_1+\prod_{k=1}^{n}(\omega_kz_{2k})^{-\nu_2}J_{\nu_2+p}\left(\omega_kz_{2k}\right)e_2\nonumber\\
       &<_hB,\;\;0<|\omega_kZ_k|_h<\infty,
  \end{align}
  where $B$ is a constant.
  Using \eqref{eq:g78} and \eqref{eq:d43}, we have
  \begin{align}
      \mathcal{T}^{\sigma,\varepsilon}_{p,p}(\zeta(\omega))&=\sup\left|(\omega^{-1}D_\omega)^p[\omega]^{-\mathcal{V}-\frac{1}{2}}\zeta(\omega)\right|_h\nonumber\\
      &<_hB\lim_{s\rightarrow\infty}\int_{0}^{s}\ldots\int_{0}^{s}\sup\left|\eta(\mathcal{Z})\prod_{k=1}^{n}\left(1+Z_k^2\right)[\mathcal{Z}]^{2p+\mathcal{V}+\frac{1}{2}}\right|_h\prod_{k=1}^{n}\frac{1}{(1+Z_k^2)} dZ_1\ldots dZ_n\nonumber\\
      &=B\left(\frac{\pi}{2}\right)^n\sup\left|\prod_{k=1}^{n}\left(1+Z_k^2\right)[\mathcal{Z}]^{2p+\mathcal{V}+\frac{1}{2}}\eta(\mathcal{Z})\right|_h\nonumber\\
      &=B\left(\frac{\pi}{2}\right)^n\sup\left|\prod_{k=1}^{n}\left(1+Z_k^2\right)[\mathcal{Z}]^{2p+2\mathcal{V}+1}[\mathcal{Z}]^{-\mathcal{V}-\frac{1}{2}}\eta(\mathcal{Z})\right|_h.
  \end{align}
   Again, if $q$ is an integer such that $\mathcal{V}+p+\frac{1}{2}\leq_h q$, then
  \begin{align}\label{eq:f45}
      [\mathcal{Z}]^{2p+2\mathcal{V}+1}=\prod_{k=1}^{n}Z_k^{2\left(\mathcal{V}+p+\frac{1}{2}\right)}<_h\prod_{k=1}^{n}\left(1+Z_k^2\right)^q
  \end{align}
  Therefore,\begin{align*}
      \mathcal{T}^{\sigma,\varepsilon}_{p,p}(\zeta(\omega))&<_hB\left(\frac{\pi}{2}\right)^n\sup\left|\prod_{k=1}^{n}\left(1+Z_k^2\right)^{q+1}[\mathcal{Z}]^{-\mathcal{V}-\frac{1}{2}}\eta(\mathcal{Z})\right|_h\\
      &<_hB\left(\frac{\pi}{2}\right)^n\sup\left|\exp\left(-\rho\sum_{k=1}^{n}|Z_k|_h\right)\prod_{k=1}^{n}\left(1+Z_k^2\right)^{q+1}[\mathcal{Z}]^{-\mathcal{V}-\frac{1}{2}}\eta(\mathcal{Z})\right|_h\\
      &=B\left(\frac{\pi}{2}\right)^n\sum_{r=0}^{q+1}{}^{q+1}C_r\;\mathcal{G}^\mathcal{V}_{\rho,r}(\eta(\mathcal{Z})).
       \end{align*}
       Hence, $\mathscr{H}_\mathcal{V}^{-1}$ is a continuous linear mapping $\mathcal{C}^\mathcal{V}_\rho$ to $\mathcal{A}_{\mathcal{V},\varepsilon}$. Since, $\mathscr{H}_\mathcal{V}$ is linear and homeomorphism therefore it is an isomorphism. Hence the proof is completed.
 \end{proof}

 Now we shall illustrate, through specific examples, the advantages of employing the $n$-dimensional Hankel transformation to solve partial differential equations that involve the operator $\mathcal{M}_{\mathcal{V}}\mathcal{N}_{\mathcal{V}}$.
 \begin{example}\label{ex1}
     Let us consider the partial differential equation
     \begin{align}\label{eq:dp}
         \mathcal{M}_{\mathcal{V}}\mathcal{N}_{\mathcal{V}}u(\omega_1,...,\omega_n,t)=\lambda^2\frac{\partial^2}{\partial t^2}u(\omega_1,...,\omega_n,t),
     \end{align}
     where $u(\omega_1,...,\omega_n,t)\in \mathcal{A}_{\mathcal{V},\varepsilon}$, with satisfies the initial conditions $u(x_1,...,x_n,t)\rightarrow f(x_1,...,x_n)$ as $t\rightarrow0$ and $\frac{\partial u}{\partial t}\rightarrow g(\omega_1,...,\omega_n)$ as $t\rightarrow 0$.
 \end{example}
  By taking both sides $n$-dimensional bicomplex Hankel transform $\mathscr{H}_\mathcal{V}$ on
\eqref{eq:dp}, we get
\begin{align}\label{eq:x7}
    (-1)^{n}[\mathcal{Z}]^2U(Z_1,...,Z_n,t)=\lambda^2\frac{\partial^2}{\partial t^2}U(Z_1,...,Z_n,t),
\end{align}
with $U(Z_1,...,Z_n,0)=F(Z_1,...,Z_n)=\mathscr{H}_\mathcal{V}(f(\omega_1,...,\omega_n))$ and $\frac{\partial U(Z_1,...Z_n,0)}{\partial t}=G(Z_1,...,Z_n)$. If $n$ is even positive integer then solution of the equation \eqref{eq:x7} is given by
\begin{align*}
    U(Z_1,...,Z_n,t)&=\frac{F(Z_1,...,Z_n)[\mathcal{Z}]+\lambda G(Z_1,...,Z_n)}{2[\mathcal{Z}]}\exp\left(\frac{[\mathcal{Z}]t}{\lambda}\right)\\&+\frac{F(Z_1,...,Z_n)[\mathcal{Z}]-\lambda G(Z_1,...,Z_n)}{2[\mathcal{Z}]}\exp\left(-\frac{[\mathcal{Z}]t}{\lambda}\right).
\end{align*}
For odd positive integer value of $n$, we get
\begin{align*}
     U(Z_1,...,Z_n,t)=F(Z_1,...,Z_n)\cos\left(\frac{[\mathcal{Z}]t}{\lambda}\right)+G(Z_1,...,Z_n)\sin\left(\frac{[\mathcal{Z}]t}{\lambda}\right).
\end{align*}
 Now taking the inverse of the $n$-dimensional bicomplex Hankel transform we get the results
 \begin{align*}
 u(\omega_1,...,\omega_n,t)=\lim_{s\rightarrow\infty}\int_{0}^{s}\ldots\int_{0}^{s} U(Z_1,...,Z_n,t)\prod_{k=1}^{n}\sqrt{\omega_kZ_k}\;\mathcal{J}_\mathcal{V}\left(\omega_kZ_k\right) dZ_1\ldots dZ_n.
 \end{align*}
 \begin{remark}\label{re2}
     Putting $n=1$ in \eqref{eq:dp} we obtain bicomplex generalized wave equation
     \begin{align}\label{eq:c59}
        \frac{\partial^2}{\partial\omega_1^2}u(\omega_1,t)-\frac{4\mathcal{V}^2-1}{4\omega_1^2}u(\omega_1,t)=\lambda^2\frac{\partial^2}{\partial t^2}u(\omega_1,t),
    \end{align}
    whose solution is given by
    \begin{align*}
        u(\omega_1,t)=\lim_{s\rightarrow\infty}\int_{0}^{s}\left[F(Z_1)\cos\left(\frac{Z_1t}{\lambda}\right)+G(Z_1)\sin\left(\frac{Z_1t}{\lambda}\right)\right]\sqrt{\omega_1Z_k}\mathcal{J}_\mathcal{V}\left(\omega_1Z_1\right) dZ_1,
    \end{align*}
    where $F(Z_1)=\mathscr{H}_\mathcal{V}(f(\omega_1))$ and $G(Z_1)=\mathscr{H}_\mathcal{V}(g(\omega_1))$.
 \end{remark}
 \begin{remark}
    Setting $\mathcal{V}=-\frac{1}{2}$ in \eqref{eq:c59}, we obtain two dimensional classical wave equation
    \begin{align*}
        \frac{\partial^2}{\partial\omega_1^2}u(\omega_1,t)=\lambda^2\frac{\partial^2}{\partial t^2}u(\omega_1,t),
    \end{align*}
    and the solution is given by
    \begin{align}\label{eq:x96}
        u(\omega_1,t)=\lim_{s\rightarrow\infty}\int_{0}^{s}\left[F(Z_1)\cos\left(\frac{Z_1t}{\lambda}\right)+G(Z_1)\sin\left(\frac{Z_1t}{\lambda}\right)\right]\sqrt{\omega_1Z_1}\mathcal{J}_{-\frac{1}{2}}\left(\omega_1Z_1\right) dZ_1,
    \end{align}
    where $F(Z_1)=\mathscr{H}_\mathcal{V}(f(\omega_1))$ and $G(Z_1)=\mathscr{H}_\mathcal{V}(g(\omega_1))$.
 \end{remark}
    \begin{remark}
       Putting  \begin{equation*}
        f(\omega_1) =
        \begin{cases}
        \displaystyle 1, & \text{if } 0<\omega_1<1, \\
            \\
            0, & \text{otherwise},
        \end{cases}
       ~~~~~ \text{and}\;\;\;\; g(\omega_1)=0
    \end{equation*}
    in \eqref{eq:x96} of Remark 3 we obtain the solution of wave equation as
    \begin{align}\label{eq:d92}
        u(\omega_1,t)=\lim_{s\rightarrow\infty}\int_{0}^{s}\left[\int_0^1\sqrt{\omega_1Z_1}\mathcal{J}_{-\frac{1}{2}}\left(\omega_1Z_1\right)d\omega_1\right]\cos\left(\frac{Z_1t}{\lambda}\right)\sqrt{\omega_1Z_k}\mathcal{J}_{-\frac{1}{2}}\left(\omega_1Z_1\right) dZ_1,
    \end{align}
    and graphical representation we have plotted in Figure~\ref{fig:wave}.
    \end{remark}

\begin{figure}[ht!]
  \centering
    \includegraphics[width=0.6\linewidth]{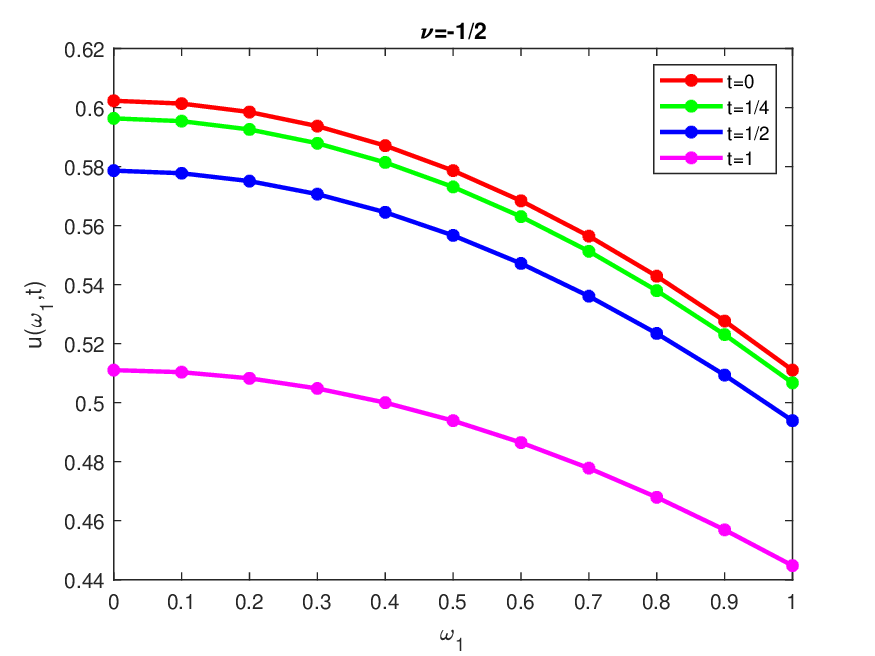}
    \caption{ $u(\omega_1,t)$ for $\omega_1\in[0,1]$, $\lambda=1$ in equation \eqref{eq:d92}.}
  \label{fig:wave}
\end{figure}

\begin{example}\label{ex5}
     We investigate the solution of another partial differential equation
     \begin{align}\label{eq:c1}
\mathcal{M}_{\mathcal{V}}\mathcal{N}_{\mathcal{V}}u(\omega_1,...,\omega_n,t)=\lambda\frac{\partial}{\partial t}u(\omega_1,...,\omega_n,t),
     \end{align}
     where $u(\omega_1,...,\omega_n,t)\in \mathcal{A}_{\mathcal{V},\varepsilon}$, with satisfies the initial conditions $u(\omega_1,...,\omega_n,t)\rightarrow f(\omega_1,...,\omega_n)$ as $t\rightarrow0$.
 \end{example}
 Similarly, after applying $n$-dimensional bicomplex Hankel transform both side on \eqref{eq:c1}, we have
 \begin{align*}
     \lambda\frac{\partial}{\partial t}U(Z_1,...,Z_n,t)=(-1)^{n}[\mathcal{Z}]^2U(Z_1,...,Z_n,t).
 \end{align*}
 Then the corresponding solution is given by
 \begin{align*}
     U(Z_1,...,Z_n,t)=F(Z_1,...,Z_n)\exp\left(\frac{(-1)^{n}[\mathcal{Z}]^2}{\lambda}\right).
 \end{align*}
 By using inversion formula of $n$-dimensional bicomplex Hankel transform, we obtain
 \begin{align*}
     u(\omega_1,...,\omega_n,t)=\lim_{s\rightarrow\infty}\int_{0}^{s}\ldots\int_{0}^{s} F(Z_1,...,Z_n)\exp\left(\frac{(-1)^{n}[\mathcal{Z}]^2}{\lambda}\right)\prod_{k=1}^{n}\sqrt{\omega_kZ_k}\;\mathcal{J}_\mathcal{V}\left(\omega_kZ_k\right) dZ_1\ldots dZ_n.
 \end{align*}
 \begin{remark}\label{re6}
     Setting $n=1$ in \eqref{eq:c1}, we get bicomplex generalized heat equation
     \begin{align}\label{eq:x53}
         \frac{\partial^2}{\partial\omega_1^2}u(\omega_1,t)-\frac{4\mathcal{V}^2-1}{4\omega_1^2}u(\omega_1,t)=\lambda\frac{\partial}{\partial t}u(\omega_1,t),
     \end{align}
     whose solution is given by
     \begin{align*}
     u(\omega_1,t)=\lim_{s\rightarrow\infty}\int_{0}^{s} F(Z_1)\exp\left(\frac{- Z_1^2}{\lambda}\right)\sqrt{\omega_1Z_1}\;\mathcal{J}_\mathcal{V}\left(\omega_1Z_1\right) dZ_1,
 \end{align*}
 where $F(Z_1)=\mathscr{H}_\mathcal{V}(f(\omega_1))$.
 \end{remark}
 \begin{remark}
     For $\mathcal{V}=-\frac{1}{2}$ the equation \eqref{eq:x53} reduces to heat equation
     \begin{align*}
         \frac{\partial^2}{\partial\omega_1^2}u(\omega_1,t)=\lambda\frac{\partial}{\partial t}u(\omega_1,t),
     \end{align*}
     and solution is given by
     \begin{align}\label{eq:i73}
         u(\omega_1,t)=\lim_{s\rightarrow\infty}\int_{0}^{s} F(Z_1)\exp\left(-\frac{Z_1^2}{\lambda}\right)\sqrt{\omega_1Z_1}\;\mathcal{J}_{-\frac{1}{2}}\left(\omega_1Z_1\right) dZ_1,
     \end{align}
     where $F(Z_1)=\mathscr{H}_\mathcal{V}(f(\omega_1))$.
 \end{remark}
 \begin{remark}
     Putting  \begin{equation*}
        f(\omega_1) =
        \begin{cases}
        \displaystyle 1, & \text{if } 0<\omega_1<1, \\
            \\
            0, & \text{otherwise},
        \end{cases}
    \end{equation*}
    in \eqref{eq:i73}, we have the solution of heat equation as
    \begin{align}\label{eq:x83}
         u(\omega_1,t)=\lim_{s\rightarrow\infty}\int_{0}^{s} \left[\int_0^1\sqrt{\omega_1Z_1}\mathcal{J}_{-\frac{1}{2}}\left(\omega_1Z_1\right)d\omega_1\right]\exp\left(-\frac{Z_1^2}{\lambda}\right)\sqrt{\omega_1Z_1}\;\mathcal{J}_{-\frac{1}{2}}\left(\omega_1Z_1\right) dZ_1,
    \end{align}
    and graphical representation we have plotted in Figure~\ref{fig:heat}.
 \end{remark}
 \begin{figure}[ht!]
  \centering
    \includegraphics[width=0.6\linewidth]{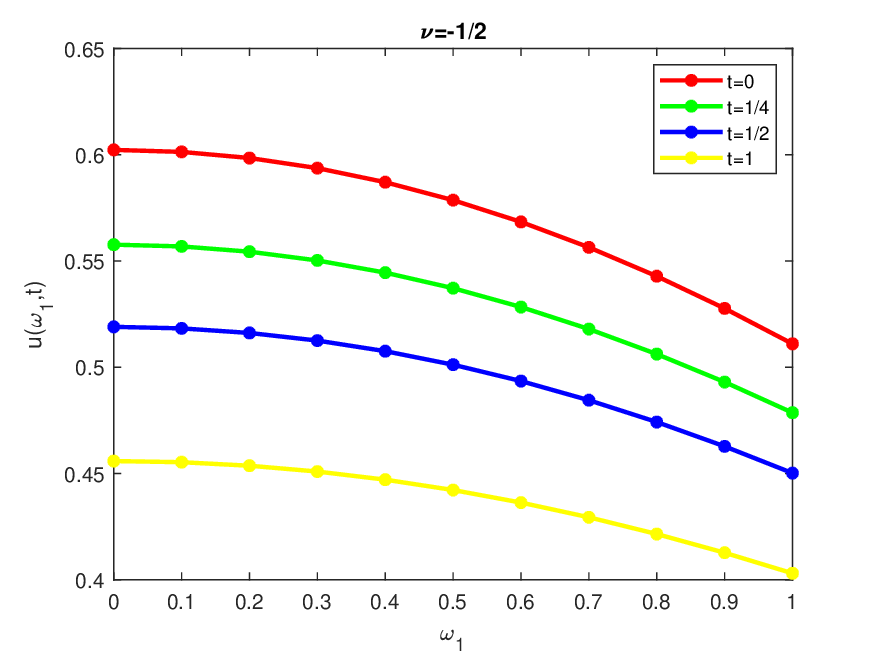}
    \caption{$u(\omega_1,t)$ for $\omega_1\in[0,1]$, $\lambda=1$ in equation \eqref{eq:x83}.}
  \label{fig:heat}
\end{figure}
\section{ Bicomplex generalized coherent states associated with bicomplex Bessel function}
In recent decades, coherent states have attracted considerable scientific interest because of their wide application in fields such as condensed matter physics, mathematical physics, signal processing and quantum information. Various extension of coherent states have been developed, including Mittag Lafler coherent states \cite{ml_cs}, generalized hypergeometric coherent states \cite{gh_cs}, coherent states for generalized Laguerre functions \cite{lg_cs} etc. In \cite{int_mittage}, the author constructed and studied the characteristics of a generalized integral multi-index Mittag-Leffler function and also developed and analyzed the properties of the coherent states related to this function for continuous spectrum. Numerous important books and research articles have been written on coherent states and their various applications \cite{bc_hamiltonian,coherent_app,deformed_osc,coh_incoh,gen_coherent_app}. In earlier work \cite{bicomplex ghf}, Fock states were defined within an infinite-dimensional bicomplex Hilbert space as follows:
\begin{equation}\label{eq:fost}
\mid n>=\sum_{l=1}^{2}\mid n_l> e_l,\quad n_l=0,1,2,...\nonumber
\end{equation}
and also we discus the orthogonality and completeness conditions of the Fock states which are presented as follows:
\begin{equation}\label{csn3}
<n\mid m>=\sum_{l=1}^{2}\delta_{n_l m_l}e_l\;\;\mbox{and}\;\;\sum_{n=0}^{\infty}\mid n><n\mid=1.
\end{equation}
In this section, we define bicomplex generalized states related to bicomplex Bessel function  $\mathcal{J}_\mathcal{V}(Z)$. Moreover, we prove those states are coherent states and satisfies the following properties:
\begin{itemize}
    \item [(a)] Normalization: $<Z \mid Z>=1$,
    \item [(b)] Continuity: $|Z'-Z|_h\rightarrow 0$ implies that $||\mid Z'>-\mid Z>||\rightarrow 0$,
    \item[(c)] Resolution of unity: A weight function $\mathcal{W}(|Z|_h)>_h0$, is chosen such that the integral $\int d \mu(Z)\mid Z><Z\mid=1$ holds, where the integration measure is given by $$d\mu(Z)=\frac{d\psi_Z}{2\pi}d(|Z|^2_h)\mathcal{W}(|Z|_h)$$ and hyperbolic angle is expressed as $\psi_Z=\theta_1e_1+\theta_2e_2,\;0\leq\theta_1,\theta_2<2\pi$.
\end{itemize}
 First we introduce the bicomplex generalized states in the form
\begin{eqnarray}\label{eq:csj}
\mid Z>= \frac{1}{\sqrt{\mathscr{N}_{\mathscr{V}}(\mid Z\mid_h^2)}}\sum_{n=0}^\infty \frac{Z^n}{\sqrt{\rho(n)}}\mid n>,
\end{eqnarray}
where the corresponding normalization functions takes the following form
\begin{eqnarray}\label{cs3}
  \mathscr{N}_{\mathscr{V}}(Y)&=& (-Y)^{\frac{-\mathcal{V}}{2}}\Gamma_b(\mathcal{V}+1)\mathcal{J}_\mathcal{V}(i\sqrt{Y})\nonumber\\
  &=& \sum_{l=1}^{2}\sum_{n_l=0}^\infty\frac{\Gamma(\nu_l+1)y_l^{n_l}}{4^{n_l}n_l!\Gamma(\nu_l+n_l+1)}e_l,\;\;Y=\sum_{l=1}^2y_le_l=|Z|_h^2=\sum_{l=1}^2|z_l|^2e_l\nonumber
\end{eqnarray}
 and
 \begin{align}
 \rho(n)=\sum_{l=1}^2\frac{4^{n_l}\Gamma(n_l+1)\Gamma(\nu_l+n_l+1)}{\Gamma(\nu_l+1)}e_l.
 \end{align}
 Now, $\rho(n)$ satisfies the following recurrence relation
 \begin{align}\label{eq:cs9}
     \rho(n+1)&=\sum_{l=1}^2\frac{4^{n_l+1}\Gamma(n_l+2)\Gamma(\nu_l+n_l+2)}{\Gamma(\nu_l+1)}e_l\nonumber\\
    &=4\left[\sum_{l=1}^2(n_l+1)(\nu_l+n_l+1)e_l\right]\rho(n) \;\; \mbox{and} \\ \rho(0)&=1.\nonumber
 \end{align}
 For the general approach to the construction mentioned above $\rho(n)$ must be a positive hyperbolic number and the restriction imposed on $\mathcal{V}$ is that $\mathcal{V}$ is a hyperbolic number and satisfies $\mathcal{V}>_h-1$. We can now evaluate the scalar product by utilizing the normalized function as
 \begin{align}\label{eq:chu}
     <Z \mid Z'>=\frac{\mathscr{N}_{\mathscr{V}}(Z^*Z')}{\sqrt{\mathscr{N}_{\mathscr{V}}(\mid Z\mid_h^2)}\sqrt{\mathscr{N}_{\mathscr{V}}(\mid Z'\mid_h^2)}},
 \end{align}
 which is normalized but not orthogonal. Replacing, $Z$ by $i\sqrt{Y}$ in Theorem \ref{eq:s3} we obtain the hyperbolic radius of convergence of $\mathscr{N}(Y)$ is infinite, so the expression $<Z \mid Z'>$ is well define. Let us consider
 \begin{align}
     f(r)=\sum_{l=1}^2\sqrt{4(r_l+1)(\nu_l+r_l+1)}e_l
 \end{align}
 where $r=\sum\limits_{l=1}^2r_le_l$. Then
 \begin{align}
     \prod_{r=0}^{n-1}f(r)=\sum_{l=1}^2\sqrt{\frac{4^{n_l}\Gamma(n_l+1)\Gamma(\nu_l+r_l+1)}{\Gamma(\nu_l+1)}}=\sqrt{\rho(n)}.
 \end{align}
 Now we define bicomplex generalized annihilation and creation operators as
 \begin{eqnarray}
\mathcal{A}_-&=&\sum_{n=0}^\infty f(n)\mid n><n+1\mid\label{csn1}\nonumber\\
\mathcal{A}_+&=&\sum_{n=0}^\infty f(n)\mid n+1><n\mid\label{csn2}\nonumber
\end{eqnarray}
where $\mathcal{A}_+$ is adjoint operator of $\mathcal{A}_-$. Following the same approach as in \cite{bicomplex ghf}, we obtain the annihilation $\mathcal{A}_-$, creation operators $\mathcal{A}_+$ generate the bicomplex coherent states and these operators satisfies the following relation:
\begin{equation}\label{eqa}
\mathcal{A}_-\mid n>=f(n-1)\mid n-1>\nonumber
\end{equation}
and
\begin{equation}\label{eqc}
\mathcal{A}_+\mid n>=f(n)\mid n+1>.
\end{equation}
Therefore, the operator $\mathcal{A}_-$ acts as a lowering operator, whereas its conjugate operator $\mathcal{A}_+$  serves as a raising operator. Their product operator in the normal ordered manner are diagonal operators in the basis of Fock states
\begin{eqnarray}\label{csn7}
\mathcal{A}_-\mathcal{A}_+\mid n>&=&\left[f(n)\right]^2\mid n>\nonumber\\
\Rightarrow <n\mid \mathcal{A}_-\mathcal{A}_+\mid n>&=&\left[f(n)\right]^2\nonumber
\end{eqnarray}
and
\begin{eqnarray}\label{csn8}
\mathcal{A}_+\mathcal{A}_-\mid n>&=&\left[f(n-1)\right]^2\mid n>\nonumber\\
\Rightarrow <n\mid \mathcal{A}_+\mathcal{A}_-\mid n>&=&\left[f(n-1)\right]^2.\nonumber
\end{eqnarray}
These two operators $\mathcal{A}_-$ and $\mathcal{A}_+$ are non-commutative, their commutator is given by
\begin{equation}\label{csn9}
\left[\mathcal{A}_-,\mathcal{A}_+\right]=\sum_{n=0}^{\infty}\left(\left[f(n)\right]^2-\left[f(n-1)\right]^2\right)\mid n><n\mid,\nonumber
\end{equation}
and by applying the recurrence relation \eqref{eq:cs9}, we readily find that the generalized states $\mid Z>$ are eigenstates of annihilation operator $\mathcal{A}_-$, with bicomplex eigen value $Z$ and those states are coherent states.
Let us take the ground state $\mid 0>=\mid 0>e_1+\mid 0>e_2$, such that the lowering operator acts in the following manner $\mathcal{A}_- \mid 0>=0\mid 0>e_1+0\mid 0>e_2$. Now using \eqref{eqc}, we obtain
\begin{align*}
    (\mathcal{A}_+)^n\mid 0>&=\prod_{r=0}^{n-1}f(r)\sum_{l=1}^2\mid n_l>e_l\\
    &=\sqrt{\rho(n)}\mid n>.
\end{align*}
Since, $\mathcal{A}_+$ is the adjoint operator of $\mathcal{A}_-$, the following relations hold
\begin{align}\label{eq:csb}
    \mid n>=\frac{1}{\sqrt{\rho(n)}}(\mathcal{A}_+)^n\mid 0>\;\; \mbox{and} \;\;< n \mid=\frac{1}{\sqrt{\rho(n)}} <0 \mid (\mathcal{A}_-)^n.
\end{align}
Using \eqref{eq:csb} and \eqref{eq:csj}, we get
\begin{align}\label{eq:csn}
    \mid Z>= \frac{1}{\sqrt{\mathscr{N}_{\mathscr{V}}(\mid Z\mid_h^2)}}\sum_{n=0}^\infty \frac{(Z\mathcal{A}_+)^n}{{\rho(n)}}\mid 0>\;\;\mbox{and}\;\;<Z \mid= \frac{1}{\sqrt{\mathscr{N}_{\mathscr{V}}(\mid Z\mid_h^2)}}<0 \mid\sum_{n=0}^\infty \frac{(Z^\ast\mathcal{A}_-)^n}{{\rho(n)}}.
\end{align}
Applying the DOOT method \cite{cs_doot} along with the completeness relation of Fock states, we derive the projector
 $\mid 0><0 \mid$ corresponding to the ground state $\mid 0>$
 \begin{align}\label{csh}
     &\sum_{n=0}^{\infty}\mid n><n\mid=1\nonumber\\
    &\Rightarrow\sum_{n=0}^{\infty}\frac{1}{\rho(n)}\#(\mathcal{A}_+)^n\mid 0><0 \mid (\mathcal{A}_-)^n\#=1\nonumber\\
    &\Rightarrow \mid 0><0 \mid \sum_{l=1}^2\sum_{n_l=0}^{\infty}\frac{\Gamma(\nu_l+1)e_l}{4^{n_l}n_l!\Gamma(\nu_l+n_l+1)}\#  (\mathcal{A}_+\mathcal{A}_-)^{n_l} \#=1\nonumber\\
    &\Rightarrow \mid 0><0 \mid \#\mathscr{N}_{\mathscr{V}}(\mathcal{A}_+\mathcal{A}_-)\#=1\nonumber\\
    &\Rightarrow \mid 0><0 \mid=\frac{1}{\#\mathscr{N}_{\mathscr{V}}(\mathcal{A}_+\mathcal{A}_-)\#}=\frac{i^{\mathcal{V}}}{\Gamma_b(\mathcal{V}+1)}\#\frac{(\mathcal{A}_+\mathcal{A}_-)^{\frac{\mathcal{V}}{2}}}{\mathcal{J}_\mathcal{V}(i\sqrt{\mathcal{A}_+\mathcal{A}_-})}\#.
      \end{align}
From \eqref{eq:chu}, we deduce the continuity property
\begin{align*}
    &\lim_{Z'\rightarrow Z} ||\mid Z'>-\mid Z>||^2\\
    &=\lim_{Z'\rightarrow Z}\left[<Z' \mid Z'>+<Z \mid Z>-<Z' \mid Z>-<Z \mid Z'>\right]\\
    &=0.
\end{align*}
Now, we have determine the weight function $\mathcal{W}(|Z|_h)$ such the bicomplex generalized coherent state fulfill the resolution of unity
\begin{align}\label{eq:csm}
    &\int d \mu(Z)\mid Z><Z\mid=1.
\end{align}
Using \eqref{eq:csn},\eqref{csh} and \eqref{eq:csm}, we get
\begin{align}\label{eq:csl}
    &\int d \mu(Z) \frac{1}{\mathscr{N}_{\mathscr{V}}(\mid Z\mid_h^2)}\sum_{n=0}^\infty \frac{\#(Z\mathcal{A}_+)^n(Z^\ast\mathcal{A}_-)^n\#}{{[\rho(n)]^2}}\mid 0><0\mid=1\nonumber\\
    &\Rightarrow \int \frac{d\psi_Z}{2\pi}d(|Z|^2_h)\mathcal{W}(|Z|_h)\frac{1}{\mathscr{N}_{\mathscr{V}}(\mid Z\mid_h^2)}\sum_{n=0}^\infty \frac{\#(Z\mathcal{A}_+)^n(Z^\ast\mathcal{A}_-)^n\#}{{[\rho(n)]^2}}=\frac{1}{\mid 0><0\mid}\nonumber\\
    &\Rightarrow \left[\sum_{l=0}^2\int_{0}^{2\pi} \frac{d\theta_l}{2\pi}e_l\right]\;\int_{0}^{\infty}d(|Z|^2_h)\frac{\mathcal{W}(|Z|_h)}{\mathscr{N}_{\mathscr{V}}\left(\mid Z\mid_h^2\right)}\sum_{n=0}^\infty \frac{(|Z|_h^2)^n}{{[\rho(n)]^2}}\#(\mathcal{A}_+\mathcal{A}_-)^n\#=\#\mathscr{N}_{\mathscr{V}}(\mathcal{A}_+\mathcal{A}_-)\#\nonumber\\
    &\Rightarrow \sum_{n=0}^\infty \frac{\#(\mathcal{A}_+\mathcal{A}_-)^n\#}{{[\rho(n)]^2}}\int_{0}^{\infty}d(|Z|^2_h)\frac{\mathcal{W}(|Z|_h)}{\mathscr{N}_{\mathscr{V}}\left(\mid Z\mid_h^2\right)}(|Z|_h^2)^n=\#\mathscr{N}_{\mathscr{V}}(\mathcal{A}_+\mathcal{A}_-)\#.
    \end{align}
    Let us consider $\tilde{\mathcal{W}}(|Z|_h)=\frac{\mathcal{W}(|Z|_h)}{\mathscr{N}_{\mathscr{V}}\left(\mid Z\mid_h^2\right)}=\sum\limits_{l=1}^2W_{l}e_l$ and substitute in \eqref{eq:csl}, we get
    \begin{align}\label{eq:cs2}
    & \sum_{n=0}^\infty \frac{\#(\mathcal{A}_+\mathcal{A}_-)^n\#}{{\rho(n)}}\left[\frac{1}{\rho(n)}\int_{0}^{\infty}d(|Z|^2_h)\tilde{\mathcal{W}}(|Z|_h)(|Z|_h^2)^n\right]=\#\mathscr{N}_{\mathscr{V}}(\mathcal{A}_+\mathcal{A}_-)\#\nonumber\\
    &\Rightarrow \sum_{l=1}^2\sum_{n_l=0}^{\infty}\frac{\Gamma(\nu_l+1)}{4^{n_l}\Gamma(n_l+1)\Gamma(\nu_l+n_l+1)}\#  (\mathcal{A}_+\mathcal{A}_-)^{n_l} \# \nonumber\\& \hspace{55pt}\left[\frac{\Gamma(\nu_l+1)}{4^{n_l}\Gamma(n_l+1)\Gamma(\nu_l+n_l+1)}\int_{0}^{\infty}d(y_l)W_l\;(y_l)^{n_l}\right]e_l=\#\mathscr{N}_{\mathscr{V}}(\mathcal{A}_+\mathcal{A}_-)\#\nonumber\\
    &\Rightarrow \int_{0}^{\infty}d(y_l)W_l\;(y_l)^{n_l}=\frac{4^{n_l}\Gamma(n_l+1)\Gamma(\nu_l+n_l+1)}{\Gamma(\nu_l+1)},\;\; \mbox{for $l=1,2$}.
    \end{align}
  Substituting
$n_l=s-1$ into equation \eqref{eq:cs2}, yields an integral that represents a Stieltjes moment problem (\cite{cs_dis_con, cs_st_ha}) and applying the standard formula involving the classical integral of a Meijer G-function
  \begin{align*}
      \int_{0}^\infty x^{s-1}
G_{p, q}^{\,m, n} \left( \lambda x \;\middle|\; \begin{matrix} \alpha_1, \ldots, \alpha_p \\ \beta_1, \ldots, \beta_q \end{matrix} \right)dx=\frac{\prod\limits_{i=1}^{m}\Gamma(\beta_j+s)\prod\limits_{i=1}^{n}\Gamma(1-\alpha_i-s)}{\lambda^s\prod\limits_{j=m+1}^q\Gamma(1-\beta_j-s)\prod\limits_{i=n+1}^{p}\Gamma(\alpha_i+s)}
  \end{align*}
in \eqref{eq:cs2}, we obtain
\begin{align*}
    W_l&=\frac{1}{4\Gamma(\nu_l+1)}\;G_{0, 2}^{\,2, 0} \left( \frac{y_l}{4} \;\middle|\; \begin{matrix} - \\ 0, \nu_l \end{matrix} \right)\\
    &=\frac{1}{4\Gamma(\nu_l+1)}\;G_{0, 2}^{\,2, 0} \left( \frac{|z_l|^2}{4} \;\middle|\; \begin{matrix} - \\ 0, \nu_l \end{matrix} \right)>0,
\end{align*}
 for $\nu_l>-1,l=1,2$ and the weight function is given by
\begin{align*}
    \mathcal{W}(|Z|_h)&=\mathscr{N}_{\mathscr{V}}\left(\mid Z\mid_h^2\right)\sum_{l=1}^2W_le_l\\
    &=\frac{\left(i|Z|_h\right)^{-\mathcal{V}}\Gamma_b(\mathcal{V}+1)\mathcal{J}_\mathcal{V}(i|Z|_h)}{4\Gamma_b(\mathcal{V}+1)}\sum_{l=1}^2G_{0, 2}^{\,2, 0} \left( \frac{|z_l|^2}{4} \;\middle|\; \begin{matrix} - \\ 0, \nu_l \end{matrix} \right)e_l>_h0,\;\; \mbox{for $\mathcal{V}>_h-1$}.
\end{align*}
Therefore, the corresponding integration measure is given by
\begin{align*}
    d\mu(Z)=2\frac{d\psi_Z}{\pi}d(|Z|^2_h)\left(i|Z|_h\right)^{-\mathcal{V}}\Gamma_b(\mathcal{V}+1)\mathcal{J}_\mathcal{V}(i|Z|_h)\sum_{l=1}^2G_{0, 2}^{\,2, 0} \left( \frac{|z_l|^2}{4} \;\middle|\; \begin{matrix} - \\ 0, \nu_l \end{matrix} \right)e_l.
\end{align*}
Hence, the bicomplex generalized states \eqref{eq:csj} fulfill key properties such as normalizability, continuity, and resolution of unity. These features are highly valuable because they can be used in the future for potential applications in areas such as quantum optics, nonlinear systems, quantum information, and signal processing.
\section{Conclusion}
In this paper, we extend the Bessel function to bicomplex space and explore its properties, including recurrence relations, differential relations and integral representations. Moreover, we examine bicomplex holomorphicity and analyze its asymptotic behavior. Furthermore, we introduce the $n$-dimensional bicomplex Hankel transform and establish its properties as a generalization of the n-dimensional Hankel transform \cite{hankel}. In Example \ref{ex1} and Example \ref{ex5}, we analyze the solutions of partial differential equations involving the operator $\mathcal{M}_{\mathcal{V}}\mathcal{N}_{\mathcal{V}}$ through the use of the $n$-dimensional bicomplex Hankel transform. Additionally, in Remark \ref{re2} and Remark \ref{re6}, we present a novel bicomplex generalization of the classical wave and heat equations and derive their respective solutions. Moreover, we provide graphical representations of these solutions in Figure~\ref{fig:wave} and Figure~\ref{fig:heat} respectively. In the last section, we present a new application where the generalized coherent states is derived by using bicomplex Bessel function and this state is shown to satisfy the essential properties of normalizability, continuity, and resolution of the identity. The construction of coherent states using bicomplex Bessel functions is still unreported. In future, the $n$-dimensional bicomplex Hankel transform can be applied in areas such as optics, signal processing, quantum mechanics, electromagnetic theory and various other problems in mathematical physics and engineering.


\end{document}